\newtheorem{lemma}{Lemma}
\newtheorem{proposition}{Proposition}
\newtheorem{theorem}{Theorem}
\newtheorem{corollary}{Corollary}
\theoremstyle{definition}
\newtheorem{definition}{Definition}
\newtheorem{question}{Question}
\theoremstyle{remark}
\newtheorem{conjecture}{Conjecture}
\newtheorem{prob}{Problem}
\newcommand{\covectors}{\ensuremath{\mathcal{L}}}
\newcommand{\Min}{{\rm Min}}
\newcommand{\uparr}{\ensuremath{\hspace{3pt}\uparrow\hspace{-2pt}}}
\begin{document}

\thispagestyle{empty}

\centerline{\Large\bf COMs: Complexes of Oriented Matroids}

\vspace{10mm}

\centerline{Hans-J\"{u}rgen Bandelt$^{\small 1}$, Victor Chepoi$^{\small 2}$, and Kolja Knauer$^{\small 2}$}

\medskip
\begin{small}

\centerline{$^{1}$Fachbereich Mathematik, Universit\"at Hamburg,}
\centerline{Bundesstr. 55, 20146 Hamburg, Germany,}

\centerline{\texttt{bandelt@math.uni-hamburg.de}}

\medskip
\centerline{$^{2}$Laboratoire d'Informatique Fondamentale, Aix-Marseille Universit\'e and CNRS,}
\centerline{Facult\'e des Sciences de Luminy, F-13288 Marseille Cedex 9, France}

\centerline{\texttt{\{victor.chepoi, kolja.knauer\}@lif.univ-mrs.fr}}

\end{small}

 \bigskip\bigskip\noindent
{\footnotesize {\bf Abstract.}  In his seminal 1983 paper, Jim Lawrence introduced lopsided sets and featured them as asymmetric
counterparts of oriented matroids, both sharing the key property of strong elimination. Moreover, symmetry of faces holds in both
structures as well as in the so-called affine oriented matroids. These two fundamental properties (formulated for covectors)
together lead to the natural notion of ``conditional oriented matroid'' (abbreviated COM). These novel structures can be
characterized in terms of three cocircuits axioms, generalizing the familiar characterization for oriented matroids. We
describe a binary composition scheme by which every COM can successively be erected as a certain complex of oriented matroids,
in essentially the same way as a lopsided set can be glued together from its maximal hypercube faces. A realizable COM is
represented by a hyperplane arrangement restricted to an open convex set. Among these are the examples formed by linear
extensions of ordered sets, generalizing the oriented matroids corresponding to the permutohedra. Relaxing realizability
to local realizability, we capture a wider class of combinatorial objects: we show that non-positively curved Coxeter
zonotopal complexes give rise to locally realizable COMs.

\medskip\noindent
{\bf Keywords:} oriented matroid, lopsided set, cell complex, tope graph, cocircuit, Coxeter zonotope.}

\tableofcontents

\section{Introduction}

\subsection{Avant-propos}
Co-invented by Bland $\&$ Las Vergnas~\cite{bl-lv-78} and Folkman $\&$ Lawrence~\cite{fo-la-78}, and further investigated  by Edmonds $\&$ Mandel~\cite{ed-ma-82} and
many other authors, oriented matroids  represent a unified combinatorial theory of orientations of ordinary matroids, which simultaneously captures the basic
properties of sign vectors representing the regions in a hyperplane arrangement  in ${\mathbb R}^n$ and of  sign vectors of the circuits  in a directed
graph. Furthermore, oriented matroids find applications in point and vector configurations, convex polytopes, and linear programming. Just as ordinary matroids,
oriented matroids may be defined in a multitude of distinct but  equivalent ways: in terms of cocircuits, covectors,
topes, duality, basis orientations, face lattices, and arrangements of pseudospheres.  A full account of the theory of oriented matroids is provided  in the
book by Bj\"orner, Las Vergnas, White, and Ziegler~\cite{bjvestwhzi-93} and an introduction to this rich theory is given in the textbook by Ziegler~\cite{Zie-07}.

Lopsided sets of sign vectors  defined by Lawrence~\cite{la-83} in order to capture the intersection patterns of convex sets with the orthants  of ${\mathbb R}^d$
(and further investigated in~\cite{bachdrko-06,bachdrko-12}) have found numerous applications in statistics, combinatorics, learning theory, and computational geometry, see e.g.~\cite{Mor-12}
for further details. Lopsided sets represent an ``asymmetric offshoot'' of oriented matroid theory. According to the topological
representation theorem,
oriented matroids can be viewed as regular CW cell complexes decomposing the $(d-1)$-sphere. Lopsided sets on the other hand can be regarded as particular contractible cubical complexes.

In this paper we propose a common generalization of oriented matroids and lopsided sets which is so natural that it is
surprising that it was not discovered much earlier. It also
generalizes such well-known and useful structures as convex geometries and
CAT(0) cube (and zonotopal) complexes. In this generalization, global symmetry and the existence of the zero sign vector, required for oriented
matroids, are replaced by local relative conditions. Analogous to conditional lattices (see~\cite[p. 93]{fuc-63}) and conditional antimatroids (which are particular lopsided sets
\cite{bachdrko-06}), this  motivates the name ``conditional oriented matroids'' (abbreviated: COMs) for these new structures. Furthermore, COMs can be viewed as complexes
whose cells are oriented matroids and which are glued together in a lopsided fashion. To illustrate  the concept of a COM and compare it with similar notions of
oriented matroids and lopsided sets, we continue by describing the geometric model of realizable COMs.

\subsection{Realizable COMs: motivating example}
Let us begin by considering the following familiar scenario of hyperplane arrangements and realizable oriented matroids; compare
with~\cite[Sections 2.1, 4.5]{bjvestwhzi-93} or~\cite[p. 212]{Zie-07}. Given a {\it central arrangement of hyperplanes}
of ${\mathbb R}^d$ (i.e., a finite set $E$ of  $(d-1)$--dimensional linear subspaces of  ${\mathbb R}^d$), the space ${\mathbb R}^d$
is partitioned into open regions and recursively into regions of the intersections of some of the given hyperplanes. Specifically,
we may encode the location of any point from all these regions relative to this arrangement when for each hyperplane one of the
corresponding halfspaces is regarded as positive and the other one as negative. Zero designates location on that hyperplane.
Then the set  $\mathcal L$ of all sign vectors representing the different regions relative to $E$ is the set of covectors of
the oriented matroid of the arrangement $E$. The oriented matroids obtained in this way are called {\it realizable}. If
instead of a central arrangement one considers finite arrangements $E$ of affine hyperplanes (an affine hyperplane is the
translation of a (linear) hyperplane by a vector), then the sets of sign vectors of regions defined by $E$  are known as
{\it realizable affine oriented matroids}~\cite{Karlander-92} and~\cite[p.186]{bachdrko-06}. Since an affine arrangement
on ${\mathbb R}^d$ can be viewed as the intersection of a central arrangement of ${\mathbb R}^{d+1}$ with a translate of
a coordinate hyperplane, each realizable affine oriented matroid can be embedded into a larger realizable oriented matroid.

Now suppose that  $E$ is a central or affine arrangement of hyperplanes of ${\mathbb R}^d$ and $C$ is an open convex set,
which may be assumed to intersect all hyperplanes of $E$ in order to avoid redundancy. Restrict the arrangement pattern to $C$,
that is, remove all sign vectors which represent the open regions disjoint from $C$. Denote the resulting set of sign vectors
by $\covectors(E,C)$ and call it a {\it realizable COM}. Figure~\ref{Figure1a} displays an arrangement comprising two pairs of
parallel lines and a fifth line intersecting the former four lines within the open $4$-gon. Three lines (nos. 2, 3, and 5)
intersect in a common point. The line arrangement defines 11 open regions within the open $4$-gon, which are represented by
their topes, viz. $\pm 1$  covectors. The dotted lines connect adjacent topes and thus determine the tope graph of the arrangement.
This graph is shown in Figure~\ref{Figure1b} unlabeled, but augmented by the covectors of the 14 one-dimensional and 4 two-dimensional faces.

\begin{figure}[htb]
   \centering
   \subfigure[\label{Figure1a}]{
    \includegraphics[width=.4\textwidth]{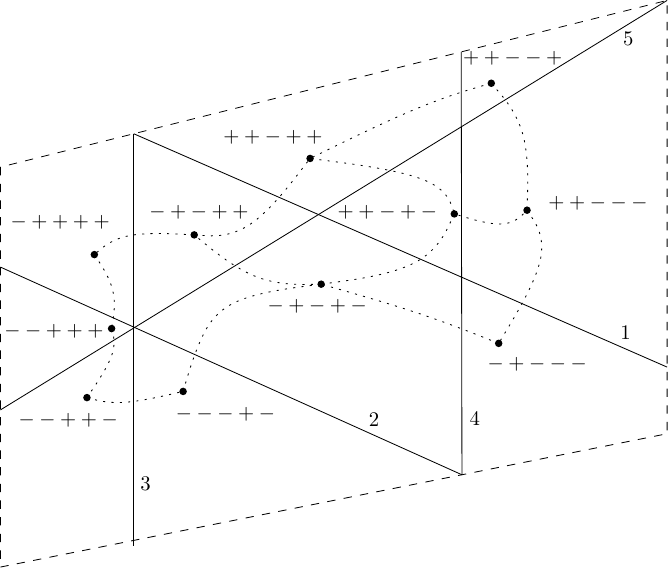}
   }
   \subfigure[\label{Figure1b}]{
    \includegraphics[width=.5\textwidth]{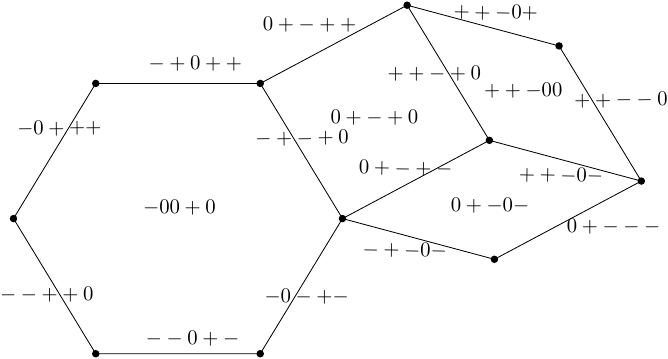}
   }
   \caption{\subref{Figure1a} An arrangement of five lines and its tope graph. \subref{Figure1b} Faces and edges of the tope graph are
   labeled with corresponding covectors. Sign vectors are abbreviated as strings of $+$, $-$, and $0$ and to be read from left to right.}
   \end{figure}

Our model of realizable COMs generalizes realizability of oriented and affine oriented matroids on the one hand and realizability
of lopsided sets on the other hand. In the case of a central arrangement $E$ with $C$ being any open convex set containing the origin
(e.g., the open unit ball or the entire space ${\mathbb R}^d$), the resulting set $\covectors(E,C)$ of
sign vectors coincides with the realizable oriented matroid of $E$. If the arrangement $E$ is affine and $C$ is the entire space, then
$\covectors(E,C)$ coincides with the realizable affine oriented matroid of $E$.
The realizable lopsided sets arise by taking  the (central) arrangement $E$ of all coordinate hyperplanes $E$ restricted to
arbitrary open convex sets $C$ of ${\mathbb R}^d$. In fact, the original definition of
realizable lopsided sets by Lawrence~\cite{la-83} is similar but used instead  an arbitrary (not necessarily open) convex set
$K$ and as regions the closed orthants. Clearly, $K$ can be assumed to be
a polytope, namely the convex hull of points representing the closed orthants meeting $K$. Whenever the polytope $K$ does not
meet a closed orthant then some open neighborhood of $K$ does not meet that orthant either. Since there are only finitely many
orthants, the intersection of these open neighborhoods results in an open set $C$ which has the same intersection pattern with
the closed orthants as $K$. Now, if an open set meets a closed orthant it will also meet the corresponding open orthant,
showing that both concepts of realizable lopsided sets coincide.

\subsection{Properties of realizable COMs} For the general scenario of realizable COMs, we can attempt to identify its basic properties
that are known to hold in oriented matroids.  Let $X$ and $Y$ be sign vectors belonging to $\covectors$, thus designating  regions
represented by two points $x$ and $y$ within $C$ relative to the arrangement $E$; see Figure 2 (compare with Fig. 4.1.1 of~\cite{bjvestwhzi-93}).
Connect the two points by a line segment and choose $\epsilon> 0$ small enough so that the open ball of radius $\epsilon$ around $x$
intersects only those hyperplanes from $E$ on which $x$ lies. Pick any point $w$ from the intersection of this $\epsilon$-ball with
the open line segment between $x$ and $y$. Then the corresponding sign vector $W$ is the {\it composition} $X\circ Y$ as defined by
$$(X\circ Y)_e = X_e  \mbox{ if } X_e\ne 0  \mbox{ and } (X\circ Y)_e=Y_e \mbox{ if } X_e=0.$$
Hence the following rule  is fulfilled:

\medskip\noindent
{\sf (Composition)}~~~~~~  $X\circ Y$ belongs to $\covectors$ for all sign vectors $X$ and $Y$ from $\covectors$. 

\medskip\noindent
If we select instead a point $u$ on the ray from $y$ via $x$ within the $\epsilon$-ball but beyond $x$, then the
corresponding sign vector $U$ has the opposite signs relative to $W$ at the coordinates corresponding to the hyperplanes
from $E$ on which $x$ is located and which do not include the ray from $y$ via $x$. Therefore the following property holds:

\medskip\noindent
{\sf (Face symmetry)}~~~~~~ $X\circ-Y$ belongs to $\covectors$ for all $X,Y$ in $\covectors$.

\medskip\noindent
Next assume that the hyperplane $e$ from $E$ separates $x$ and $y$, that is, the line segment between $x$ and $y$ crosses $e$ at
some point $z$. The corresponding sign vector $Z$ is then zero at $e$ and equals the composition $X\circ Y$ at all coordinates
where $X$ and $Y$ are {\it sign-consistent}, that is, do not have opposite signs:

\medskip\noindent
{\sf (Strong elimination)}~~~~~~ for each pair $X,Y$ in $\covectors$ and for each $e\in E$ with $X_eY_e=-1$ there 
exists $Z \in  \covectors$ such that $Z_e=0$  and  $Z_f=(X\circ Y)_f$  for all $f\in  E$ with $X_fY_f\ne -1$.

\medskip\noindent
Now, the single property of oriented matroids that we have missed in the general scenario is the existence of the zero sign vector, which would correspond to a
non-empty intersection of all hyperplanes from $E$ within the open convex set $C$:

\medskip\noindent
{\sf (Zero vector)}~~~~~~ the zero sign vector ${\bf 0}$ belongs to $\covectors$.

\medskip\noindent
On the other hand, if the hyperplanes from $E$ happen to be the coordinate hyperplanes, then wherever a sign vector $X$ has zero
coordinates, the composition of $X$ with any sign vector from $\{\pm1,0\}^E$ is a sign vector belonging to $\covectors$. This
rule, which is stronger than composition and face symmetry, holds
in lopsided systems, for which the ``tope'' sets are exactly the lopsided sets sensu Lawrence~\cite{la-83}:

\medskip\noindent
{\sf (Ideal composition)}~~~~~~ $X\circ Y\in \covectors$ for all $X\in \covectors$ and all sign vectors $Y$, that is, 
substituting any zero coordinate of a sign vector from $\covectors$ by any other sign yields a sign vector of $\covectors$.

\medskip
\begin{figure}[htb]
\includegraphics[width = .5\textwidth]{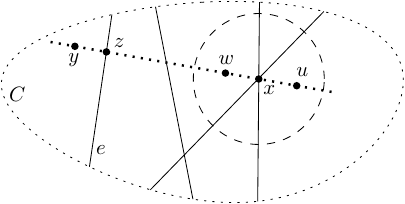}
\caption{Motivating model for the three axioms.}
\label{fig:Fig2}
\end{figure}

In the model of hyperplane arrangements we can retrieve the cells which constitute oriented matroids. Indeed, consider
all non-empty intersections of hyperplanes from $E$ that are minimal with respect to inclusion. Select any sufficiently
small open ball around some point from each intersection. Then the subarrangement of hyperplanes through each of these
points determines regions within these open balls which yield an oriented matroid of sign vectors. Taken together all
these constituents form a complex of oriented matroids, where their intersections are either empty or are faces of the
oriented matroids involved. These complexes are still quite special as they conform to global strong elimination. The
latter feature is not guaranteed in general complexes of oriented matroids, which were called ``bouquets of oriented
matroids''~\cite{DeFu}.

It is somewhat surprising that the generalization of oriented matroids defined by the three fundamental properties of
composition, face symmetry, and strong elimination have apparently not yet been studied systematically. On the other
hand, the preceding discussion shows that the properties of composition and strong elimination hold whenever $C$ is
an arbitrary convex set. We used the hypothesis that the set $C$ be open only for deriving face symmetry. The following
example shows that indeed face symmetry may be lost when $C$ is closed: take two distinct lines in the Euclidean plane,
intersecting in some point $x$ and choose as $C$ a closed halfspace which includes $x$ and the entire $++$ region but is disjoint from the $--$
region. Then $+-, +0, ++, 0+, -+$, and $00$ comprise the sign vectors of the regions within $C$, thus violating face
symmetry. Indeed, the obtained system can be regarded as a lopsided system with an artificial zero added. On the other
hand, one can see that objects obtained this way are realizable \emph{oriented matroid polyhedra}~\cite[p. 420]{bjvestwhzi-93}.

\subsection{Structure of the paper}
In Section~\ref{sec:basic_axioms} we will continue by formally introducing the systems of sign vectors considered in this paper.
In Section~\ref{sec:minors} we prove that COMs are closed under minors and simplification, thus sharing this fundamental
property with oriented matroids. We also introduce the fundamental concepts of fibers and faces of COMs, and show that
faces of COMs are OMs. Section~\ref{sec:topes} is dedicated to topes and tope graphs of COMs and we show that both these
objects uniquely determine a COM.  Section~\ref{sec:mingen} is devoted  to  characterizations of minimal systems of
sign-vectors which  generate a given COM by composition. In Section~\ref{sec:cocircuit} we extend these characterizations
and, analogously to oriented matroids, obtain a characterization of COMs in terms of cocircuits. In Section~\ref{sec:hyperplanes}
we define carriers, hyperplanes, and halfspaces, all being COMs naturally contained in a given COM. We present a characterization
of COMs in terms of these substructures. In Section~\ref{sec:amalgam} we study decomposition and amalgamation procedures for
COMs and show that every COM can be obtained by successive amalgamation of oriented matroids. In Section~\ref{sec:Euler}, we
extend the Euler-Poincar\'e formula from OMs to COMs and characterize lopsided sets in terms of a particular variant of it.
In Section~\ref{sec:ranking} as a resuming example we study the COMs provided by the ranking extensions -- aka weak
extensions -- of a partially ordered set and illustrate the operations and the results of the paper on them.   In
Section~\ref{sec:complexes} we consider a topological approach to COMs and study them as complexes of oriented
matroids. In particular, we show that non-positively curved Coxeter zonotopal complexes give rise to COMs.  We
close the paper with several  concluding remarks and two conjectures in Section~\ref{sec:conclude}.

\section{Basic axioms}\label{sec:basic_axioms}
We follow the standard oriented matroids notation from~\cite{bjvestwhzi-93}. Let $E$ be a non-empty finite (ground)
set and let $\covectors$ be a non-empty {\it set of sign vectors}, i.e.,
maps from $E$ to $\{\pm 1,0\} = \{-1,0,+1\}$. The elements of $\covectors$ are also referred to as \emph{covectors} and
denoted by capital letters $X, Y, Z$, etc.  For $X \in \covectors$, the subset $\underline{X} = \{e\in E: X_e\neq 0\}$
is called the \emph{support} of $X$ and  its complement  $X^0=E\setminus \underline{X}=\{e\in E: X_e=0\}$ the \emph{zero set} of $X$
(alias the \emph{kernel} of $X$). Simlaly, we denote $X^+=\{e\in E: X_e=+\}$ and $X^-=\{e\in E: X_e=-\}$. We can regard a sign vector $X$ as the incidence vector of a $\pm1$ signed subset $\underline{X}$ of
$E$ such that to each element of $E$ one element of the signs $\{\pm1, 0\}$ is assigned. We denote by $\leq$  the product ordering
on $\{ \pm 1,0\}^E $ relative to the standard
ordering of signs with $0 \leq -1$ (sic!) and $0 \leq +1$.

For $X,Y\in \covectors$, we call $S(X,Y)=\{f\in E: X_fY_f=-1\}$
the \emph{separator} of $X$ and $Y$. The elements of $S(X,Y)$ are said to \emph{separate} $X$ and $Y$.
If the separator is empty, then $X$ and $Y$ are said to be {\it sign-consistent}.
In particular, this is the case when $X$ is below $Y$, that is, $X\le Y$ holds.  The \emph{composition}
of $X$ and $Y$ is the sign vector $X\circ Y,$ where
$(X\circ Y)_e = X_e$  if $X_e\ne 0$  and  $(X\circ Y)_e=Y_e$  if $X_e=0$. Note that $X\leq X\circ Y$
for all sign vectors $X,Y$.

Given a set of sign vectors $\covectors$, its \emph{topes} are the maximal elements of $\covectors$
with respect to $\leq$. Further let
$$\uparr \covectors:=\{ Y \in \{ \pm 1,0\}^E: X \leq Y \text{ for some } X \in  \covectors\}=\{X\circ W: X\in\covectors \text{ and } W\in \{ \pm 1,0\}^E\}$$
be the \emph{upset} of $\covectors$ in the ordered set $(\{ \pm 1,0\}^E,\leq)$.

If a set of sign  vectors is closed with respect to $\circ$, then the resulting idempotent semigroup (indeed a left regular band
alias skew semilattice~\cite{MargSF,MargarXiv}) is called the \emph{braid semigroup}, see e.g.~\cite{bjorner08}. The composition operation
naturally occurs also elsewhere:
for a single coordinate, the composite $x\circ y$ on $\{\pm 1,0\}$ is actually derived as the term $t(x,0,y)$ (using $0$ as a constant)
from the ternary discriminator $t$ on $\{\pm 1,0\}$, which is defined by $t(a,b,c)=a$ if $a\ne b$ and $t(a,b,c)=c$ otherwise.
Then in this context of algebra and logic, ``composition'' on the product $\{\pm 1,0\}^E$ would rather be referred to
as a ``skew Boolean join''~\cite{bignall-91}.

We continue with the formal definition of the main axioms as motivated and discussed in the previous section.

\medskip\noindent
{\bf Composition:}
\begin{itemize}
\item[{\bf (C)}] $X\circ Y \in  \covectors$  for all $X,Y \in  \covectors$.
\end{itemize}

Condition (C) is taken from the list of axioms for oriented matroids. Since $\circ$ is associative, arbitrary
finite compositions can be written without bracketing $X_1\circ\ldots \circ X_k$ so
that (C) entails that they all belong to $\covectors$. Note that contrary to a convention sometimes made in
oriented matroids we do not consider compositions over an empty index set, since this would
imply that the zero sign vector belonged to $\covectors$. We highlight condition (C) here although it
will turn out to be a consequence of another axiom specific in this context. The reason is that we
will later use several weaker forms of the axioms which are no longer consequences from one another.

\medskip\noindent
{\bf Strong elimination:}
\begin{itemize}
\item[{\bf (SE)}]  for each pair $X,Y\in\covectors$ and for each $e\in  S(X,Y)$ there exists $Z \in  \covectors$ such that
$Z_e=0$  and  $Z_f=(X\circ Y)_f$  for all $f\in E\setminus S(X,Y)$.
\end{itemize}

Note that $(X\circ Y)_f=(Y\circ X)_f$ holds exactly when $f\in E\setminus S(X,Y).$ Therefore the sign vector $Z$ provided by (SE) serves both ordered pairs $X,Y$ and $Y,X$.

Condition (SE) is one of the axioms for covectors of oriented matroids
and is implied by the property of route systems in lopsided sets, see Theorem 5 of~\cite{la-83}.

\bigskip\noindent
{\bf Symmetry:}
\begin{itemize}
\item[{\bf (Sym)}] $-\covectors=\{ -X: X\in \covectors\}=\covectors,$ that is, $\covectors$ is closed under sign reversal.
\end{itemize}

Symmetry restricted to zero sets of covectors (where corresponding supports are retained) is dubbed:

\medskip\noindent
{\bf Face symmetry:}
\begin{itemize}
\item[{\bf (FS)}] $X\circ -Y \in  \covectors$  for all $X,Y \in  \covectors$.
\end{itemize}

This condition can also be expressed by requiring that for each pair $X,Y$ in $\covectors$ there exists $Z\in \covectors$ with $X\circ Z=Z$ such that $X=\frac{1}{2}(X\circ Y+X\circ Z)$.
Face symmetry trivially implies (C) because by (FS) we first get $X\circ -Y\in\covectors$ and then $X\circ Y= (X\circ -X)\circ Y= X\circ -(X\circ -Y)$ for all $X,Y\in \covectors$.

\medskip\noindent
{\bf Ideal composition:}
\begin{itemize}
\item[{\bf (IC)}] $\uparr\covectors=\covectors$.
\end{itemize}
Notice that (IC) implies (C) and (FS).
We are now ready to define the main objects of our study:
\begin{definition}\label{def}
 A system of sign vectors  $(E,\covectors)$ is called a:
 \begin{itemize}
  \item\emph{strong elimination system} if $\covectors$ satisfies (C) and (SE),
  \item\emph{conditional oriented matroid (COM)} if  $\covectors$ satisfies (FS) and (SE),
  \item\emph{oriented matroid (OM)} if $\covectors$ satisfies (C), (Sym), and (SE),
   \item\emph{lopsided system} if $\covectors$ satisfies (IC) and (SE).
 \end{itemize}
\end{definition}

For oriented matroids one can replace (C) and (Sym) by (FS) and

\medskip\noindent
{\bf Zero vector:}
\begin{itemize}
\item[{\bf (Z)}]  the zero sign vector ${\bf 0}$ belongs to $\covectors$.
\end{itemize}

Notice that the axiom (SE) can be somewhat weakened in the presence of (C), i.e., in particular in Definition~\ref{def}. If
(C) is true in the system $(E,\covectors)$, then for $X,Y \in  \covectors$ we have
$X\circ Y = (X\circ Y)\circ (Y\circ X)$,  $\underline{X\circ Y} = \underline{Y\circ X} = \underline{X}\cup \underline{Y}$, 
and also for the separators we have $S(X\circ Y,Y\circ X) = S(X,Y)$.

Therefore, if (C) holds, we may substitute (SE) by
\begin{itemize}
\item[{\bf (SE$^=$)}] for each pair $X, Y\in \covectors$ with $\underline{X} = \underline{Y}$ and for each $e\in  S(X,Y)$
there exists $Z\in \covectors$ such that $Z_e=0$  and  $Z_f=(X\circ Y)_f$  for all $f\in  E\setminus S(X,Y)$,
\end{itemize}

The axioms (C), (FS), (SE$^=$) (plus a fourth condition) were used by Karlander~\cite{Karlander-92} in his study 
of affine oriented matroids that are embedded as ``halfspaces" (see Section~\ref{sec:hyperplanes} below) of oriented matroids.

\section{Minors, fibers, and faces}\label{sec:minors}
In the present technical section we show that the class of COMs is closed under taking minors, defined as for oriented matroids. 
We use this to establish that simplifications and semisimplifications of COMs are minors of COMs and therefore COMs. We also introduce 
fibers and faces of COMs, which will be of importance for the rest of the paper.

Let $(E,\covectors)$ be a COM and $A\subseteq E$. Given a sign vector $X\in\{\pm 1, 0\}^E$  by $X\setminus A$ we refer to the \emph{restriction} 
of $X$ to $E\setminus A$, that is $X\setminus A\in\{\pm1, 0\}^{E\setminus A}$ with $(X\setminus A)_e=X_e$ for all $e\in E\setminus A$. 
The \emph{deletion} of $A$ is defined as $(E\setminus A,\covectors\setminus A)$, where $\covectors\setminus A:=\{X\setminus A:  X\in\covectors\}$. 
The \emph{contraction} of $A$ is defined as $(E\setminus A,\covectors/ A)$, where 
$\covectors/ A:=\{X\setminus A: X\in\covectors\text{ and }\underline{X}\cap A=\varnothing\}$. If a system of sign vectors arises by 
deletions and contractions from another one it is said to be \emph{minor} of it.

\begin{lemma}\label{lem:minorclosed}
The properties {\normalfont (C)}, {\normalfont (FS)}, and {\normalfont (SE)} are all closed under taking minors.
In particular, if $(E,\covectors)$ is a COM and $A\subseteq E$, then $(E\setminus A,\covectors\setminus A)$ and 
$(E\setminus A,\covectors/ A)$ are COMs as well.
\end{lemma}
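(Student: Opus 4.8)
The plan is to reduce everything to two elementary facts about the restriction map $X\mapsto X\setminus A$ from $\{\pm1,0\}^E$ to $\{\pm1,0\}^{E\setminus A}$: it is a homomorphism for composition, $(X\circ Y)\setminus A=(X\setminus A)\circ(Y\setminus A)$, and it commutes with sign reversal, $(-X)\setminus A=-(X\setminus A)$. Both are immediate coordinatewise. Since deletions and contractions of distinct elements commute, it would suffice to treat single elements, but as the arguments go through verbatim I would simply argue for an arbitrary subset $A\subseteq E$.

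For the deletion $(E\setminus A,\covectors\setminus A)$, note that $X\mapsto X\setminus A$ maps $\covectors$ onto $\covectors\setminus A$ by definition, so any $X',Y'\in\covectors\setminus A$ lift to some $X,Y\in\covectors$. Then (C) and (FS) for $\covectors\setminus A$ follow at once from the two homomorphism properties above applied to $X\circ Y$ and $X\circ -Y$. For (SE), observe that $S(X',Y')=S(X,Y)\cap(E\setminus A)$; given $e\in S(X',Y')$ we have $e\in S(X,Y)$, so (SE) in $\covectors$ yields $Z\in\covectors$ with $Z_e=0$ and $Z_f=(X\circ Y)_f$ for all $f\in E\setminus S(X,Y)$. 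Setting $Z'=Z\setminus A$, one has $Z'_e=0$, and for $f\in(E\setminus A)\setminus S(X',Y')$ necessarily $f\notin S(X,Y)$, whence $Z'_f=(X\circ Y)_f=(X'\circ Y')_f$.

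For the contraction $(E\setminus A,\covectors/A)$, the key point is that the covectors retained are exactly those that vanish on $A$, and this class is closed under $\circ$ and under $-$ because $\underline{X\circ Y}=\underline{X}\cup\underline{Y}$. Thus if $X,Y\in\covectors$ vanish on $A$, so do $X\circ Y$ and $X\circ -Y$, and restricting gives (C) and (FS) for $\covectors/A$. For (SE), lift $X',Y'\in\covectors/A$ to $X,Y\in\covectors$ vanishing on $A$; then $S(X,Y)\cap A=\emptyset$ and $S(X',Y')=S(X,Y)$. For $e\in S(X',Y')$ apply (SE) in $\covectors$ to get $Z$; the one extra thing to verify is that $Z$ still vanishes on $A$, which holds because every $f\in A$ lies in $E\setminus S(X,Y)$, so $Z_f=(X\circ Y)_f=0$. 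Hence $Z\setminus A\in\covectors/A$ and it has the required properties. Since a COM is precisely a system satisfying (FS) and (SE), closure of these two properties under deletion and contraction yields the ``in particular'' clause.

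I do not expect a serious obstacle here; the only place that demands a moment's care is the contraction case of (SE), where one must notice that the eliminator $Z$ produced inside $\covectors$ automatically remains in the contracted system, because it agrees with $X\circ Y$ outside the separator and $A$ is disjoint from that separator.
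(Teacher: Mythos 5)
Your proposal is correct and follows essentially the same route as the paper: lift covectors of the minor to $\covectors$, use that restriction is a homomorphism for $\circ$ and sign reversal to get (C) and (FS), and for (SE) restrict the eliminator $Z$, noting in the contraction case that $Z$ vanishes on $A$ because $A$ is disjoint from the separator and $Z$ agrees with $X\circ Y$ there. The one point you single out as needing care is exactly the point the paper's proof also highlights, so nothing is missing.
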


\begin{proof}
We first prove that $(E\setminus A,\covectors\setminus A)$ is a COM. To see (C) and (FS) let $X\setminus A,Y\setminus A\in \covectors\setminus A$. 
Then $X\circ (\pm Y)\in \covectors$ and $(X\circ (\pm Y))\setminus A= X\setminus A\circ (\pm Y\setminus A)\in\covectors\setminus A$.

To see (SE) let $X\setminus A,Y\setminus A\in \covectors\setminus A$ and $e$ an element separating $X\setminus A$ and $Y\setminus A$.  
Then there is $Z\in\covectors$ with $Z_e=0$ and $Z_f=(X\circ Y)_f$ for all $f\in E\setminus S(X,Y)$. Clearly,
$Z\setminus A\in \covectors\setminus A$ satisfies (SE) with respect to $X\setminus A,Y\setminus A$.

Now, we prove that $(E\setminus A,\covectors/ A)$ is a COM. Let $X\setminus A,Y\setminus A\in \covectors/A$, i.e.,
$\underline{X}\cap A=\underline{Y}\cap A=\varnothing$. Hence $\underline{X\circ (\pm Y)}\cap A=\varnothing$ and
therefore $X\setminus A\circ (\pm Y\setminus A)\in \covectors/A$, proving (C) and (FS).

To see (SE) let $X\setminus A,Y\setminus A\in \covectors/ A$ and $e$ an element separating $X\setminus A$
and $Y\setminus A$. 
Then there is $Z\in\covectors$ with $Z_e=0$ and $Z_f=(X\circ Y)_f$ for all $f\in E\setminus S(X,Y)$.
In particular, if $X_f=Y_f=0$, then $Z_f=0$. Therefore, $Z\setminus A\in \covectors/A$ and it satisfies (SE).
\end{proof}

\begin{lemma}\label{lem:minorcommute}
If $(E,\covectors)$ is a system of sign vectors and $A, B\subseteq E$ with $A\cap B=\varnothing$, then
$(E\setminus (A\cup B),(\covectors\setminus A)/B)=(E\setminus (A\cup B),(\covectors/B)\setminus A)$.
\end{lemma}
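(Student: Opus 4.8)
The plan is to show that the two sets of sign vectors are literally the same subset of $\{\pm1,0\}^{E\setminus(A\cup B)}$ by tracing through the definitions of deletion and contraction. Since $A\cap B=\emptyset$, restricting a sign vector $X\in\{\pm1,0\}^E$ first to $E\setminus A$ and then to $(E\setminus A)\setminus B$ yields exactly the same thing as restricting first to $E\setminus B$ and then to $(E\setminus B)\setminus A$, and in both cases this equals $X\setminus(A\cup B)$, the restriction of $X$ to $E\setminus(A\cup B)$. So the only content is in keeping track of which covectors $X$ survive to contribute.

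Concretely, I would unfold the left-hand side. By definition $\covectors\setminus A=\{X\setminus A : X\in\covectors\}$, and then $(\covectors\setminus A)/B=\{(X\setminus A)\setminus B : X\setminus A\in\covectors\setminus A \text{ and } \underline{X\setminus A}\cap B=\emptyset\}$. Now $\underline{X\setminus A}=\underline{X}\cap(E\setminus A)$, and since $B\subseteq E\setminus A$ (because $A\cap B=\emptyset$), the condition $\underline{X\setminus A}\cap B=\emptyset$ is equivalent to $\underline{X}\cap B=\emptyset$. Hence the left-hand side equals $\{X\setminus(A\cup B) : X\in\covectors \text{ and } \underline{X}\cap B=\emptyset\}$. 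Running the same computation on the right-hand side, $(\covectors/B)\setminus A=\{(X\setminus B)\setminus A : X\in\covectors \text{ and } \underline{X}\cap B=\emptyset\}$, which again equals $\{X\setminus(A\cup B) : X\in\covectors \text{ and } \underline{X}\cap B=\emptyset\}$. The two descriptions coincide.

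There is no real obstacle here; the lemma is purely bookkeeping. The one point that deserves a word of care is the symmetric-looking claim: a priori one might worry that the contraction step in $(\covectors\setminus A)/B$ could throw away a covector whose support met $A$ but not $B$, whereas $(\covectors/B)\setminus A$ would first have kept it — but this is not an issue, because $\covectors\setminus A$ only remembers coordinates outside $A$, so the support condition imposed on it is automatically about coordinates in $B\subseteq E\setminus A$ only. Thus the mild subtlety is simply verifying that the support condition ``$\underline{X\setminus A}\cap B=\emptyset$'' really does pull back to ``$\underline{X}\cap B=\emptyset$'' rather than something stronger, and this is exactly where $A\cap B=\emptyset$ is used. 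I would state the proof in a few lines along these lines, noting explicitly that it does not use any of the COM axioms, only the set-theoretic definitions of deletion and contraction.
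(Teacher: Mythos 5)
Your proof is correct and in substance the same as the paper's: both are pure definitional bookkeeping, with the key observation that the support condition imposed by contracting $B$ only concerns coordinates in $B\subseteq E\setminus A$ and therefore transfers verbatim between $X\in\covectors$ and $X\setminus A$. The only difference is cosmetic: the paper first reduces to singletons $A=\{e\}$, $B=\{f\}$ and verifies the same equivalence there, whereas you compute with general $A$ and $B$ directly, which if anything spares the implicit reduction step.
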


\begin{proof}
It suffices to prove this for $A$ and $B$ consisting of single elements $e$ and $f$, respectively.
Now $X\setminus\{e,f\}\in (\covectors\setminus \{e\})/\{f\}$ if and only if $X\in\covectors$ with $X_f=0$
which is equivalent to $X\setminus\{e\}\in\covectors\setminus\{e\}$ with $(X\setminus\{e\})_f=0$.
This is, $X\setminus\{e,f\}\in (\covectors/\{f\})\setminus \{e\}$.
\end{proof}

Next, we will define simple and semisimple  systems of sign vectors. These are motivated by the hyperplane model for COMs,
that possesses additional properties, reflecting that we have a set of hyperplanes rather than a multiset and that the given
convex set is open. This is also motivated by the requirement of defining systems of sign vectors not containing coloops and
parallel elements, which is relevant, for example, for the identifications of topes.

A {\it coloop} of $(E,\covectors)$ is an element $e\in E$ such that $X_e=0$ for all $X\in \covectors$. Hence
$(E,\covectors)$ does not have coloops if and only if for  each element $e$,  there exists a
covector $X$ with $X_e \neq 0$. Two elements $e,e'\in E$ of $(E,\covectors)$ are \emph{parallel}, denoted $e\parallel e'$,
if either $X_e=X_{e'}$ for all $X\in\covectors$ or  $X_e=-X_{e'}$ for all $X\in\covectors$. It is easy to see that $\parallel$
is an equivalence relation. The condition that $(E,\covectors)$ does not contain parallel elements can be expressed
by the requirement that for each pair $e\neq f$ in $E$,  there exist $X,Y \in \covectors$ with $X_e\neq X_f$ and $Y_e\neq -Y_f$.

Simple systems are defined by two axioms which are slightly stronger than those which ensure that  the absence of
coloops and parallel elements. We call the system $(E,\covectors)$ {\it simple} if it satisfies the following non-redundancy axioms:

\medskip\noindent
{\bf Simplicity:}
\begin{itemize}
\item[{\bf (N1)}]  for each $e \in E$,  $\{\pm 1,0 \}=\{X_e: X\in \covectors\}$;
\item[{\bf (N2)}]  for each pair $e\neq f$ in $E$,  there exist $X,Y \in \covectors$ with $\{X_eX_f, Y_eY_f\}=\{\pm1\}$.
\end{itemize}

We will also consider the weaker notion of semisimple systems, which are the simple systems when restricted
to the set $$E_{\pm}:=\{ e\in E: \{ X_e: X\in \covectors\}\ne \{ +1\},\{ -1\}\}$$ of those elements $e$ at which $\covectors$
is not non-zero constant.  We call the system $(E,\covectors)$ {\it semisimple} if it satisfies the following restricted
non-redundancy axioms:

\medskip\noindent
{\bf Semisimplicity:}
\begin{itemize}
\item[{\bf (RN1)}]  for each $e \in E_{\pm}$,  $\{\pm 1,0 \}=\{X_e: X\in \covectors\}$;
\item[{\bf (RN2)}]  for each pair $e\neq f$ in $E_{\pm}$,  there exist $X,Y \in \covectors$ with $\{X_eX_f, Y_eY_f\}=\{\pm1\}$.
\end{itemize}

Let $E_0:=\{e\in E: X_e=0$ for all $X\in\covectors\}$ be the set of all coloops. Condition (RN1) yields that $E_0=\varnothing$.
The condition that there are no coloops is relevant for the identification of topes. Recall that a tope of $\covectors$  is any covector $X$
that is maximal with respect to the standard sign ordering defined above. In the presence of (C), the covector $X$ is a tope precisely when
$X\circ Y = X$ for all $Y\in \covectors$, that is, for each
$e\in E$ either $X_e\in \{\pm 1\}$ or $Y_e = 0$ for all $Y\in \covectors$. In particular, if both (C) and $E_0=\varnothing$
hold, then the topes are exactly the covectors with full support $E$. Notice also that condition (N2) yields that there are no parallel elements. 
Consequently, simple systems do not contain coloops, parallel elements,  and their topes are the covectors with full support (while semisimple 
systems satisfy the first and the third conditions).

Further put
$$E_1:=\{ e \in E: \#\{ X_e: X\in \covectors\}=1\}= E_0\cup (E\setminus E_{\pm}),$$
$$E_2:=\{e \in E: \#\{ X_e: X\in \covectors\}=2\}.$$
The sets $E_0\cup E_2$ and $E_1\cup E_2$ comprise the positions at which $\covectors$ violates (RN1) or (N1), respectively.
Hence the deletions $(E\setminus(E_0\cup E_2), \covectors\setminus (E_0\cup E_2))$ and $(E\setminus(E_1\cup E_2), \covectors\setminus (E_1\cup E_2))$
constitute the canonical transforms of $(E,\covectors)$ satisfying (RN1) and (N1), respectively. One equivalence class of the parallel
relation $\|$ restricted to $\covectors\setminus (E_0\cup E_2)$ is $E\setminus E_{\pm}$, i.e., it comprises the (non-zero) constant positions.
Exactly this class gets removed when one restricts $\|$ further to $\covectors\setminus (E_1\cup E_2).$  Selecting a transversal for the
classes of $\|$ on $\covectors\setminus (E_1\cup E_2)$, which arbitrarily picks exactly one element from each class and deletes all others,
results in a simple system. Restoring the entire class $E\setminus E_{\pm}$ then yields a semisimple system. We refer to these canonical
transforms as to the {\it simplification} and {\it semisimplification} of $(E,\covectors)$, respectively. Then from Lemma~\ref{lem:minorclosed} we obtain:

\begin{lemma}
The semisimplification of a system $(E,\covectors)$ of sign vectors is a semisimple minor and the simplification
is a simple minor, unique up to sign reversal on subsets of $E_{\pm}$. Either system is a COM whenever $(E,\covectors)$ is.
\end{lemma}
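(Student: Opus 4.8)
The plan is to verify the three asserted properties separately: that the semisimplification is a semisimple minor, that the simplification is a simple minor, and that both inherit the COM property from $(E,\covectors)$. For the minor claims, recall from the preceding discussion that the semisimplification is precisely the deletion $(E\setminus(E_0\cup E_2),\covectors\setminus(E_0\cup E_2))$ together with a selection of transversals for $\|$ on the non-constant positions, and the simplification is the further deletion $(E\setminus(E_1\cup E_2),\covectors\setminus(E_1\cup E_2))$ refined by a transversal choice. Deletion of a subset manifestly produces a minor, so the only thing to check is that restricting to a transversal of the parallelism classes is again realized by deleting the non-chosen coordinates; this is immediate since deleting one of two parallel coordinates loses no information. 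Composing the two deletions (they commute by Lemma~\ref{lem:minorcommute}) exhibits both canonical transforms as minors.

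Next I would argue that these minors are semisimple, respectively simple, which is essentially by construction: after deleting $E_0\cup E_2$ every remaining coordinate $e$ has $\#\{X_e:X\in\covectors\}\ge 2$, so either $e\in E_\pm$ and (RN1) holds at $e$, or $e$ is a non-zero constant position; since the system is a strong elimination system (being a COM), (RN1)$\Rightarrow$(RN1$^*$) as noted in the text, giving (RN1$^*$). Deleting a transversal complement kills all parallel pairs among the non-constant coordinates, yielding (RN2) and hence, again under (C) and (N1) on $E_\pm$, (RN2$^*$); this gives semisimplicity. For the simplification one additionally deletes $E\setminus E_\pm$, removing the constant block, so that the unrestricted (N1$^*$) and (N2$^*$) hold and the system is simple. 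The uniqueness ``up to sign reversal on subsets of $E_\pm$'' follows because different transversal choices differ only by the identification $X_e=\pm X_{e'}$ within a $\|$-class, which is exactly a relabeling by a sign flip on the chosen representatives in $E_\pm$.

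Finally, that either canonical transform is a COM whenever $(E,\covectors)$ is follows directly from Lemma~\ref{lem:minorclosed}: COMs are closed under deletion and contraction, hence under any minor, and we have just exhibited the semisimplification and simplification as minors of $(E,\covectors)$. I do not expect a genuine obstacle here; the only point requiring mild care is making precise that ``selecting a transversal for the blocks of $\|$'' is an honest deletion operation — i.e., that for parallel $e,e'$ one has $\covectors\setminus\{e'\}$ carrying the same combinatorial content, so that the transversal system is literally a sequence of deletions — after which everything reduces to the already-proven Lemmas~\ref{lem:minorclosed} and~\ref{lem:minorcommute} together with the implications among the non-redundancy conditions established in the text.
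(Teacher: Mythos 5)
Your proposal is correct and follows essentially the same route as the paper, which gives no separate proof but obtains the lemma directly from the preceding construction (the canonical transforms are deletions, hence minors, with semisimplicity/simplicity holding by construction via the stated implications among the non-redundancy conditions) together with Lemma~\ref{lem:minorclosed}. The only slip is the claim that after deleting $E_0\cup E_2$ every remaining coordinate takes at least two values (the restored non-zero constant positions take only one), but your subsequent case distinction is the right one, so nothing is affected.
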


For a system $(E,\covectors)$, a \emph{fiber} relative to some $X\in \covectors$ and $A\subseteq E$ is a set of sign vectors defined by
$${\mathcal R}=\{ Y\in \covectors: Y\setminus A=X\setminus A\}.$$
We say that such a fiber is {\it topal} if $(X\setminus A)^0=\varnothing$, that is, $X\setminus A$ is a tope of the minor
$(E\setminus A, \covectors\setminus A)$. $\mathcal R$ is a {\it face} if $X$ can be chosen so that $X^0=A$, whence faces are
topal fibers. Note that the entire system $(E,\covectors)$ can be regarded both as a topal fiber
and as the result of an empty deletion or contraction. If $(E,\covectors)$ satisfies (C), then the fiber
relative to $A:=X^0$, alias $X$-{\it face}, associated with a sign vector $X$ can be expressed in the form
$$F(X):=\{ X\circ Y: Y\in \covectors\}=\covectors \hspace{2pt}\cap \uparr\{ X\}.$$
If $S(V,W)$ is non-empty for $V,W\in \covectors$, then the corresponding faces $F(V)$ and $F(W)$ are disjoint.
Else, if $V$ and $W$ are sign-consistent, then  $F(V)\cap F(W)=F(V\circ W)$. In particular $F(V)\subseteq F(W)$
is equivalent to $V\in F(W)$, that is, $W\le V$. The ordering of faces by inclusion thus reverses the sign ordering.
The following observations are straightforward and recorded here for later use:

\begin{lemma}\label{lem:fiber} If $(E,\covectors)$ is a strong elimination system or a COM, respectively, then so are
all fibers of $(E,\covectors)$. If $(E,\covectors)$ is semisimple, then so is every topal fiber. If $(E,\covectors)$
is a COM, then for any $X\in\covectors$ the minor $(E\setminus\underline{X},F(X)\setminus \underline{X})$ corresponding
to the face $F(X)$ is an OM, which is simple whenever $(E,\covectors)$ is semisimple.
\end{lemma}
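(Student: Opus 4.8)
The plan is to verify each of the three assertions in Lemma~\ref{lem:fiber} in turn, using only the closure properties already established plus the defining axioms (C), (FS), (SE), and the characterizations of faces and topes recorded in the preceding paragraph.

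\emph{Fibers of strong elimination systems / COMs.} Let $\mathcal R=\{Y\in\covectors: Y\setminus A=X\setminus A\}$ be a fiber. The key observation is that the defining condition ``$Y\setminus A=X\setminus A$'' is preserved by composition and by sign reversal \emph{on the coordinates in $A$}: for $Y,Y'\in\mathcal R$ we have $(Y\circ Y')_e=Y_e=X_e$ for all $e\in E\setminus A$ (since $Y_e=X_e$ agrees with $X$, and in particular if $X_e\ne 0$ then $Y_e\ne0$ so $Y$ wins the composition, while if $X_e=0$ then $Y_e=Y'_e=0$), hence $Y\circ Y'\in\mathcal R$; the same computation applies to $Y\circ(-Y')$ since negation does not change the zero set and flips signs only where they agree on both sides outside $A$ — wait, more carefully: on $E\setminus A$, $Y$ and $Y'$ both equal $X$, so they are sign-consistent there, hence $(Y\circ(-Y'))_e=Y_e=X_e$ there as well. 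Thus $\mathcal R$ inherits (C) and (FS). For (SE), given $Y,Y'\in\mathcal R$ and $e\in S(Y,Y')$, note $e\in A$ (since $Y,Y'$ agree off $A$), and apply (SE) in $\covectors$ to get $Z$ with $Z_e=0$ and $Z_f=(Y\circ Y')_f$ for $f\notin S(Y,Y')$; since $E\setminus A\subseteq E\setminus S(Y,Y')$, we get $Z\setminus A=(Y\circ Y')\setminus A=X\setminus A$, so $Z\in\mathcal R$.

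\emph{Topal fibers preserve semisimplicity.} Suppose $(E,\covectors)$ is semisimple and $\mathcal R$ is a topal fiber relative to $X$ and $A$, so $X\setminus A$ has full support, i.e.\ $X_e\ne0$ for all $e\in E\setminus A$. I would argue that $E_\pm(\mathcal R)\subseteq A$: any $e\in E\setminus A$ is a (non-zero) constant coordinate of $\mathcal R$, since every $Y\in\mathcal R$ has $Y_e=X_e\ne0$. So the restricted non-redundancy conditions for $\mathcal R$ only concern coordinates and pairs inside $A$. For such $e\in A\cap E_\pm(\mathcal R)$, I need to recover a covector of $\mathcal R$ realizing each sign at $e$; this follows by taking a covector $W\in\covectors$ with $W_e$ equal to the desired sign — available because $e\in E_\pm(\covectors)$ (the fiber has elements with $W_e\ne W'_e$, so $e$ is not constant in $\covectors$ either) — and composing: $X\circ W\in\mathcal R$ if we argue $X\circ W$ agrees with $X$ off $A$ (true since $X$ has full support there, so $X$ wins), but then $(X\circ W)_e=X_e$, which is wrong if $e\in A$ and $X_e=0$. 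Here I should instead use a covector $W$ of the fiber itself or use that $\mathcal R$ already satisfies (C) and apply the general fact that under (C) the restricted unstarred conditions (RN1) imply (RN1$^*$), so it suffices to check that $\mathcal R$ realizes both signs $\pm1$ at each $e\in A\cap E_\pm(\mathcal R)$ and both non-opposite-pair conditions (RN2) for pairs in $A$; these follow by composing any covector of $\covectors$ witnessing the condition with $X$ (the composition lands in $\mathcal R$ and, being $\ge X$ off $A$, retains the relevant sign pattern inside $A$ where $X$ is zero) — this is the step that needs the most care.

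\emph{Faces are OMs, simple when the COM is semisimple.} Let $X\in\covectors$ and consider the face $F(X)=\covectors\cap{\uparr}\{X\}$, a topal fiber relative to $X$ and $A=X^0$. By the first part it is a COM, hence so is its deletion $(E\setminus\underline X, F(X)\setminus\underline X)$ by Lemma~\ref{lem:minorclosed}. To upgrade ``COM'' to ``OM'' it suffices, by the remark after the main definition, to exhibit the zero vector (Z) in this deletion: indeed (FS) together with (Z) gives (C) and (Sym), hence OM. But $X$ itself lies in $F(X)$ (as $X\le X$), and $X\setminus\underline X$ is the all-zero sign vector on $E\setminus\underline X=X^0$; so (Z) holds in the deletion. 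For simplicity when $(E,\covectors)$ is semisimple: by the second part, $F(X)$ is a semisimple topal fiber; then its deletion by $\underline X$ removes exactly the coordinates outside $E_\pm(F(X))$ that are constant, and what remains is simple — here I would invoke the earlier discussion that deleting $E_1\cup E_2$ yields a simple system, noting that in an OM (which already satisfies (Z), hence has no nonzero constant coordinates) semisimple is equivalent to simple, so the deletion of the constant-zero coordinates $\underline X{}^c$ already produces a simple OM. The main obstacle is bookkeeping the separator inclusions and the ``composition lands in the fiber and retains signs inside $A$'' argument cleanly; everything else is routine.
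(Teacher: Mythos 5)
Your first and third paragraphs are fine (the paper records this lemma without proof, as a straightforward observation, and your verification of (C), (FS), (SE) for fibers and the ``face $=$ topal fiber, delete $\underline X$, observe the zero vector, invoke (FS)$+$(Z)$+$(SE)$\Rightarrow$OM'' argument is exactly the intended routine check). The gap is in the middle claim, semisimplicity of topal fibers, and it is precisely the step you flag. Your proposed mechanism is to take an ambient witness $W\in\covectors$ of (RN1)/(RN2) and compose it with $X$, arguing that $X\circ W$ lands in $\mathcal R$ and ``retains the relevant sign pattern inside $A$ where $X$ is zero.'' But the fiber is defined only by agreement with $X$ \emph{outside} $A$; inside $A$ the distinguished covector $X$ may be nonzero at the very coordinate $e\in A\cap E_\pm(\mathcal R)$ you are treating, and then $(X\circ W)_e=X_e$ no matter what $W$ is, so composition with $X$ produces nothing new at $e$. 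A concrete instance: $\covectors=\{--,0-,+-,+0,++\}$ (two parallel lines), $A=\{2\}$, $X=++$; the fiber is $\{+-,+0,++\}$, coordinate $2$ lies in $E_\pm(\mathcal R)$, but every $X\circ W$ has $+$ in coordinate $2$. So as written the argument for (RN1$^*$)/(RN2$^*$) of $\mathcal R$ does not go through, and the third part (simplicity of the face OM) silently depends on it, since you need every $e\in X^0$ to carry all three signs in $F(X)$.

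The repair is the alternative you mention in passing but do not carry out: use arbitrary members of the fiber, not $X$, as the left factor (any $Y\in\mathcal R$ has $Y_g=X_g\neq 0$ for all $g\notin A$, so $Y\circ W\in\mathcal R$ for every $W\in\covectors$), together with (SE) inside $\mathcal R$, which you already established. For (RN1$^*$): if some $Y\in\mathcal R$ has $Y_e=0$, note $e\in E_\pm(\covectors)$ and compose $Y$ with ambient covectors realizing $\pm1$ at $e$; if no such $Y$ exists, then both signs already occur at $e$ within $\mathcal R$ and (SE) applied inside $\mathcal R$ supplies the zero. For (RN2$^*$) with $e\neq f$ in $E_\pm(\mathcal R)$: pick $Y\in\mathcal R$ with $Y_e=0$ (now available); if $Y_f\neq 0$, composing $Y$ with ambient covectors $V,V'$ having $V_e=+1$, $V'_e=-1$ gives products $\pm Y_f$ at $\{e,f\}$, one of each sign; if $Y_f=0$, compose $Y$ with the ambient (RN2$^*$) witnesses for the pair $e,f$ (legitimate since $e,f\in E_\pm(\covectors)$ because $\mathcal R\subseteq\covectors$ takes the value $0$ there). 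Also note two small inaccuracies: the implication (RN1)$\Rightarrow$(RN1$^*$) is stated in the paper under strong elimination, not under (C) alone, and in the last paragraph it is $\underline X$ (the nonzero-constant coordinates of $F(X)$) that gets deleted, not $\underline X^{\,c}$. With these corrections the proof is complete.
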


\section{Tope graphs}\label{sec:topes}

One may wonder whether and how the topes of a semisimple COM $(E,\covectors)$ determine and generate $\covectors$.
We cannot avoid using face symmetry because one can turn every COM which is not an OM into a strong elimination system by
adding the zero vector to the system, without affecting the topes. The following result for simple oriented matroids was
first observed  by Mandel (unpublished), see~\cite[Theorem 4.2.13]{bjvestwhzi-93}.

\begin{proposition} \label{topes} Every semisimple COM $(E,\covectors)$ is uniquely determined by its set of topes.
\end{proposition}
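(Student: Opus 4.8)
The plan is to recover $\covectors$ from its set of topes $\topes$ by showing that every covector $X \in \covectors$ can be expressed as a composition of topes that lie above $X$, and conversely that every such composition which happens to be a legitimate sign vector recoverable from $\topes$ alone must already belong to $\covectors$. The first direction is the workhorse: I would prove that for every $X \in \covectors$ there exist topes $T_1, \dots, T_k \in \topes$ with $X = T_1 \circ \cdots \circ T_k$. To get this, fix $X$ and induct on $|X^0|$. If $X^0 = \emptyset$ then $X$ is itself a tope (using that $(E,\covectors)$ is semisimple, so that on $E_\pm$ a covector with empty zero set is maximal, and the constant coordinates in $E \setminus E_\pm$ are forced). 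If $X^0 \neq \emptyset$, pick $e \in X^0$; since the system is semisimple, $e \in E_\pm$, so there is $Y \in \covectors$ with $Y_e \neq 0$. Now I would like to enlarge the support of $X$ at $e$ while staying in $\covectors$. The composition $X \circ Y$ has $e$ in its support and agrees with $X$ off $X^0$, but it may disagree with $X$ nowhere (good) — actually $X \le X \circ Y$ always, so $X \circ Y$ is a covector with strictly larger support, still above $X$. Iterating, I reach a tope $T^+ \ge X$. That alone is not enough to write $X$ as a composition of topes, though; I need enough topes above $X$ to "cancel back down" to $X$.

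The cleaner route for the first direction is the standard lemma (the COM analogue of the oriented matroid fact $X = \bigcirc\{T : T \in \topes, T \ge X\}$): I claim $X = X \circ (T_1 \circ \cdots \circ T_k)$ where $T_1, \dots, T_k$ are \emph{all} topes above $X$, and in fact $X$ equals a composition of topes above it, by induction on $|X^0|$ as follows. The face $F(X) = \covectors \cap \uparr\{X\}$ is, by Lemma~\ref{lem:fiber}, an OM on ground set $E$ after contracting $\underline{X}$ — more precisely $(E \setminus \underline{X}, F(X) \setminus \underline{X})$ is an OM, which is simple since $(E,\covectors)$ is semisimple. In a simple OM the zero covector is a composition of its topes (this is the Mandel result cited just before the proposition, applied to the OM $F(X)\setminus\underline X$), and a tope of that OM corresponds to a tope of $\covectors$ lying above $X$ (a covector $T \ge X$ with $T^0 \subseteq \underline{X}$... but we need $T^0 = \emptyset$; here semisimplicity again forces that the maximal covectors of $F(X)\setminus\underline X$ pull back to full-support covectors of $\covectors$). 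Composing the lifts of those OM-topes, prefixed by $X \circ$, gives back $X$. So every $X \in \covectors$ is a composition of topes of $\covectors$.

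For the converse — that nothing outside $\covectors$ is forced — the point is that $\covectors$ is the \emph{smallest} set of sign vectors containing $\topes$ and closed under $\circ$, hence determined by $\topes$; but I must rule out that some \emph{other} COM $\covectors'$ has the same topes yet differs from $\covectors$. Suppose $(E,\covectors)$ and $(E,\covectors')$ are both semisimple COMs with $\topes = \topes'$. By the first part, $\covectors \subseteq \langle \topes \rangle_\circ = \langle \topes' \rangle_\circ$ and the latter is contained in $\covectors'$ by (C) applied in $\covectors'$; symmetrically $\covectors' \subseteq \covectors$. Hence $\covectors = \covectors'$. I should double-check the subtle point that $\langle\topes\rangle_\circ$, the $\circ$-closure of the topes, actually equals $\covectors$ and not something smaller: the first-direction argument shows $\covectors \subseteq \langle\topes\rangle_\circ$, and (C) for $\covectors$ gives $\langle\topes\rangle_\circ \subseteq \covectors$ since $\topes \subseteq \covectors$; so equality holds.

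The main obstacle I anticipate is the bookkeeping around semisimplicity: one must be careful that "tope of the face OM $F(X)\setminus\underline X$" lifts to a genuine tope (full-support covector, after accounting for the forced-constant coordinates $E\setminus E_\pm$) of the ambient COM $\covectors$, and that the contraction/restriction operations interact correctly with compositions. This is exactly where the hypothesis that $(E,\covectors)$ be \emph{semisimple} rather than merely a COM is used — without it, a COM can be turned into a strong elimination system by adjoining $\mathbf 0$ without changing the topes, as the remark preceding the proposition warns, so some non-redundancy is essential. I would isolate as a preliminary lemma the statement "in a semisimple COM, $X$ is a tope iff $\underline X \supseteq E_\pm$ iff $X$ restricted to $E_\pm$ has full support," and the statement "faces of semisimple COMs are simple OMs whose topes lift to topes," after which the induction and the two-inclusion uniqueness argument are routine.
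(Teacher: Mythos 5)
Your argument hinges on the claim that every covector $X\in\covectors$ is a composition of topes lying above it, and this claim is false. In a semisimple COM the topes have full support (no coloops, so any covector with a zero coordinate can be strictly enlarged by composition), and composition never destroys support: $X\le X\circ Y$ always, and if $T_1$ has full support then $T_1\circ T_2=T_1$. Hence any finite composition of topes equals its first factor, so the $\circ$-closure $\langle\topes\rangle_\circ$ is just $\topes$ itself, and no non-tope covector can ever be reached this way. In particular the statement ``in a simple OM the zero covector is a composition of its topes'' is wrong (the only way to get $\mathbf 0$ is the empty composition, which the paper explicitly excludes); Mandel's theorem cited before the proposition is a \emph{determination} statement, not a generation-by-composition statement -- in OMs it is the cocircuits, not the topes, that generate the covectors from below. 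With the first direction gone, your two-inclusion uniqueness argument ($\covectors\subseteq\langle\topes\rangle_\circ\subseteq\covectors'$) collapses: the first inclusion fails for any COM possessing a non-tope covector, which is exactly the interesting case.

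The paper's proof goes a different and essentially unavoidable way: it proves uniqueness directly, by induction on $\#E$. Given two semisimple COMs $\covectors,\covectors'$ with the same topes, all one-element deletions have equal tope sets and hence coincide by induction; then one picks $W\in\covectors'\setminus\covectors$ with $W^0$ minimal, uses semisimplicity and $\covectors'\setminus e=\covectors\setminus e$ (for $e\in W^0$) to find $V\in\covectors\cap\covectors'$ with $V\setminus e=W\setminus e$ and $V_e\neq 0$, applies (FS) in $\covectors'$ to get $W\circ -V$, notes it lies in $\covectors$ by the minimality of $\#W^0$, and finally applies (SE) in $\covectors$ to the pair $V,\,W\circ -V$ relative to $e$ to recover $W\in\covectors$, a contradiction. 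If you want to salvage a ``reconstruct $\covectors$ from $\topes$'' strategy in the spirit of Mandel, the correct device is a membership criterion of the form ``$X\in\covectors$ iff $X\circ T\in\topes$ for every tope $T$'' (together with an argument that this criterion is forced for any semisimple COM with tope set $\topes$), not composition of topes.
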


\begin{proof} We proceed by induction on $\#E$. For a single position the assertion is trivial. So assume $\#E\ge 2$.
Let $\covectors$ and $\covectors'$ be two COMs on $E$ sharing the same set of topes.
Then deletion of any $g\in E$ results in two COMs with equal tope sets, whence $\covectors'\setminus g=\covectors\setminus g$
by the induction hypothesis. Suppose that there exists some $W\in \covectors'\setminus \covectors$ chosen with $W^0$ as small
as possible. Then $\#W^0>0$. Take any $e\in W^0$. Then as $\covectors'\setminus e=\covectors\setminus e$ by semisimplicity
there exists a sign vector $V$ in $\covectors$ such that $V\setminus e=W\setminus e$ and $V_e\ne 0.$ Since $V^0\subset W^0$,
we infer that $V\in \covectors'$ by the minimality choice of $W$. Then, by (FS) applied to $W$ and $V$ in $\covectors'$,
we get $W\circ -V\in \covectors'$. This sign vector also belongs to $\covectors$ because $\#(W\circ -V)^0=\# V^0<\# W^0$.
Finally, apply (SE) to the pair $V, W\circ -V$ in $\covectors$ relative to $e$ and obtain $Z=W\in \covectors$, in conflict
with the initial assumption.
\end{proof}

The {\it tope graph} of a semisimple COM on $E$ is the graph with all topes as its vertices where two topes are adjacent
exactly when they differ in exactly one coordinate. In other words, the tope graph is the subgraph of the $\#E$-dimensional
hypercube with vertex set $\{\pm1\}^E$ induced by the tope set. Isometry means that the internal distance in the subgraph
is the same as in the hypercube. Isometric subgraphs of the hypercube are often referred to as a {\it partial cubes}~\cite{ham-11}.
For tope graphs of oriented matroids the next result was first proved in~\cite{lv-80}

\begin{proposition} \label{OMCtoTopeGraph}  The tope graph of a semisimple strong elimination system $(E,\covectors)$ is a
partial cube in which the edges correspond to the sign vectors of $\covectors$  with singleton zero sets.
\end{proposition}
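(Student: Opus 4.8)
The plan is to show two things: (i) the tope graph $G$ is an isometric subgraph of the hypercube $\{\pm1\}^E$, and (ii) edges of $G$ are exactly the covectors of $\covectors$ with singleton zero set. Throughout I work with a semisimple strong elimination system, so by semisimplicity topes have full support (no coloops on $E_\pm$, and on the constant positions the value is forced), and distances in the hypercube are measured by the separator: $d(T,T')=\#S(T,T')$ for topes $T,T'$.

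For (ii), first suppose $Z\in\covectors$ has $Z^0=\{e\}$. Using (C), compose $Z$ with any tope $T$; then $Z\circ T$ and $Z\circ(-T)$ differ only in coordinate $e$ (both agree with $Z$ off $e$), and each has full support, hence each is a tope since topes are the maximal covectors and $Z\circ T \le$ some tope which must then equal it by full support on $E_\pm$ together with semisimplicity. So every singleton-zero-set covector yields an edge. Conversely, let $T,T'$ be adjacent topes, differing precisely in coordinate $e$, so $S(T,T')=\{e\}$. Apply (SE) to the pair $T,T'$ and the element $e\in S(T,T')$: we obtain $Z\in\covectors$ with $Z_e=0$ and $Z_f=(T\circ T')_f=T_f$ for all $f\neq e$; thus $\underline Z=E\setminus\{e\}$ and $Z$ is the desired covector certifying the edge. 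This also shows the correspondence is a bijection: distinct edges have distinct pairs of topes hence the associated $Z$'s differ somewhere, and distinct singleton-zero covectors $Z,Z'$ with $Z^0=Z'^0=\{e\}$ that induce the same edge would have $Z_f=Z'_f$ for all $f\neq e$ and $Z_e=Z'_e=0$, so $Z=Z'$.

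For (i), the inclusion $d_G(T,T')\ge \#S(T,T')$ is automatic. For the reverse, I induct on $k:=\#S(T,T')$; the case $k\le 1$ is trivial (for $k=1$ use the edge just produced). For $k\ge 2$, the goal is to find a tope $T''$ adjacent to $T$ with $S(T'',T')\subsetneq S(T,T')$, i.e. $\#S(T'',T')=k-1$; then induction finishes. Pick $e\in S(T,T')$. Apply (SE) to $T,T'$ at $e$ to get $Z\in\covectors$ with $Z_e=0$ and $Z_f=T_f=(T\circ T')_f$ for all $f\in E\setminus S(T,T')$; at the other coordinates of $S(T,T')$ the value $Z_f$ is unconstrained, but in any case $S(Z,T')\subseteq S(T,T')\setminus\{e\}$ since $Z_e=0$ and off $S(T,T')$ we have $Z_f=T_f=T'_f$ (as $T,T'$ are topes agreeing there). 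Now compose: $Z\circ T$ is a tope (full support, as above) agreeing with $Z$ at $e$? no — $Z_e=0$ so $(Z\circ T)_e=T_e$, but I want to flip $e$ toward $T'$: take $T'':=Z\circ T'$ instead. Then $T''$ is a tope, $T''_e=T'_e=-T_e$, and for $f\in E\setminus S(T,T')$ we get $T''_f=Z_f=T_f$; for the remaining coordinates $f\in S(T,T')\setminus\{e\}$ either $Z_f\ne0$ and $T''_f=Z_f$, or $Z_f=0$ and $T''_f=T'_f$. In either case $T''_f\in\{T'_f,\;\text{something}\}$, and crucially $S(T'',T')\subseteq S(Z,T')\cup\{f: Z_f=0\text{ and }T''_f=T'_f\}=S(Z,T')\subseteq S(T,T')\setminus\{e\}$. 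Hence $\#S(T'',T')\le k-1$. Finally $S(T,T'')\subseteq S(T,T')$ and $e\notin S(T,T')\setminus S(T,T'')$ is not yet an edge; to guarantee an edge I instead walk from $T$ toward $T''$ inside the hypercube using that both are topes with $\#S(T,T'')\le k$ — but this risks circularity.

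The cleaner route, and the one I expect to be the crux, is to prove directly that for any two topes $T,T'$ with $S(T,T')\ne\emptyset$ there is a tope adjacent to $T$ lying strictly closer to $T'$. This is exactly the standard OM argument: fix $e\in S(T,T')$, apply (SE) at $e$ to obtain $Z$, observe $Z$ is sign-consistent with $T$ except it has a zero where $T$ disagrees with... — more precisely, since $Z_f=T_f$ for all $f\notin S(T,T')$ and $Z_e=0$, while $\underline Z\supseteq E\setminus S(T,T')$, we may choose among all such $Z$ one with $\underline Z$ maximal; then $Z\circ T$ is a tope differing from $T$ only on $S(T,T')$, and a minimality/exchange argument forces it to differ from $T$ in a single coordinate of $S(T,T')$, namely $e$, giving the required neighbor. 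The main obstacle is making this last exchange step rigorous without assuming symmetry (Sym): one must lean on (C) together with (SE) applied repeatedly to $Z$ and $T'$ to successively zero out the coordinates of $S(T,T')$ one at a time, each time staying inside $\covectors$, and then compose with $T'$ at the end. Once the "adjacent tope strictly closer" claim holds, isometry follows by the induction above, and combined with (ii) the proposition is proved.
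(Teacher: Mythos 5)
Your treatment of the edge correspondence contains a fixable slip: you compose $Z$ with $-T$, but $-T$ need not belong to $\covectors$ — a strong elimination system has no symmetry or face-symmetry axiom, so (C) does not put $Z\circ(-T)$ into $\covectors$. The repair is exactly where semisimplicity enters: since $Z_e=0$, the position $e$ lies in $E_{\pm}$, so (RN1$^*$) supplies covectors $X^+,X^-\in\covectors$ with opposite nonzero signs at $e$, and then $Z\circ X^+$ and $Z\circ X^-$ are the two full-support covectors forming the edge.

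The genuine gap is in the isometry argument, and it sits precisely at the point you flag as ``the main obstacle''. Your construction $T'':=Z\circ T'$ (or symmetrically $Z\circ T$) makes progress only when the covector $Z$ produced by (SE) is nonzero at some coordinate of $S(T,T')\setminus\{e\}$; in the degenerate case where $Z$ vanishes on all of $S(T,T')$ one gets $Z\circ T=T$ and $Z\circ T'=T'$, no intermediate tope appears, and your proposal leaves this case unresolved. Moreover, the proposed fix — leaning only on (C) and repeated applications of (SE) — cannot possibly close it, because the statement is false without the second semisimplicity condition: on $E=\{1,2\}$ the system $\covectors=\{++,--,00\}$ satisfies (C), (SE) and (RN1$^*$), yet its two topes are at hypercube distance $2$ and are joined by no path in the tope graph. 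The paper's proof handles exactly this degenerate case by invoking (RN2$^*$): pick $f\in S(T,T')\setminus\{e\}$ and $W\in\covectors$ with $0\neq W_eW_f\neq T_eT_f\neq 0$; then $Z\circ W\circ T'$ is a tope lying strictly between $T$ and $T'$ in the hypercube, which contradicts a minimal-counterexample choice of the pair $T,T'$ (note the paper's induction only needs \emph{some} tope strictly between the two, not a neighbour of $T$, which is why the two cases ``$Z_f\neq 0$ for some $f$'' and ``$Z\equiv 0$ on $S(T,T')$'' suffice). Since your writeup never uses (RN2$^*$) or (N2), the crux of the proposition is missing.
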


\begin{proof} If $X$ and $Y$ are two adjacent topes, say, differing at position $e\in E$, then the vector $Z\in \covectors$
provided by (SE) for this pair relative to $e$ has $0$ at $e$ and coincides with $X$ and $Y$ at all other positions.
By way of contradiction assume that now $X$ and $Y$ are two topes which cannot be connected by a path in the tope
graph of length $\#S(X,Y)=k>1$ such that $k$ is as small as possible. Then the interval $[X,Y]$ consisting of all
topes on shortest paths between $X$ and $Y$ in the tope graph comprises only $X$ and $Y$. For $e \in  S(X,Y)$ we
find some $Z \in  \covectors$ such that $Z_e = 0$  and  $Z_g = X_g$  for all $g \in  E \setminus S(X,Y)$ by (SE).
If there exists $f \in  S(X,Y) \setminus \{e\}$ with $Z_f \neq 0$, then $Z\circ X$ or $Z\circ Y$ is a tope different
from $X$ and $Y$, but contained in $[X,Y]$, a contradiction.

If $Z_f = 0$ for all $f \in  S(X,Y) \setminus \{e\}$, then by (RN2) there is $W\in\mathcal{L}$ with
$0\neq W_eW_f\neq X_eX_f\neq 0$. We conclude that  $Z\circ W\circ Y$ is a tope different from $X$ and $Y$ but
contained in $[X,Y]$, a contradiction. This concludes the proof.
\end{proof}

Isometric embeddings of partial cubes into hypercubes are unique up to automorphisms of the hosting
hypercube~\cite[Proposition 19.1.2]{DeLa} (and addition of superfluous dimensions). Hence,
Propositions~\ref{topes} and~\ref{OMCtoTopeGraph} together imply
the following result, which generalizes a similar result of~\cite{bjedzi-90} for tope graphs of OMs:

\begin{proposition}\label{TopeGraphtoOMC} A simple COM is determined by its tope graph up to reorientation.
\end{proposition}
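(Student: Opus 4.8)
The plan is to deduce Proposition~\ref{TopeGraphtoOMC} directly from the two preceding propositions together with the uniqueness of isometric embeddings of partial cubes into hypercubes. First I would unwind what ``determined by its tope graph up to reorientation'' should mean: a reorientation of a system of sign vectors $(E,\covectors)$ on ground set $E$ is obtained by choosing a sign vector $S\in\{\pm1\}^E$ and replacing each $X\in\covectors$ by the coordinatewise product $SX$; this is precisely the operation that corresponds, on the level of tope sets, to relabeling the two halfspaces of some of the coordinates, i.e.\ to an automorphism of the hosting hypercube $\{\pm1\}^E$ that flips a subset of coordinates. So the statement to be proved is: if $(E,\covectors)$ and $(E',\covectors')$ are semisimple COMs whose tope graphs are isomorphic as abstract graphs, then, after identifying $E$ with $E'$ via a suitable bijection and reorienting, $\covectors=\covectors'$.

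The key steps are as follows. Step one: by Proposition~\ref{OMCtoTopeGraph}, the tope graph of a semisimple COM (which is in particular a semisimple strong elimination system) is a partial cube, hence comes with a canonical isometric embedding into a hypercube $\{\pm1\}^E$; moreover semisimplicity guarantees, via the edge-labeling in that proposition, that every coordinate of $E$ is actually used by some edge, so $E$ is recovered (up to relabeling) as the set of $\Theta$-classes of the tope graph. Step two: invoke the cited fact~\cite[Proposition 19.1.2]{DeLa} that an isometric embedding of a partial cube into a hypercube is unique up to an automorphism of the hypercube; concretely, given an abstract isomorphism $\varphi$ between the tope graphs of $(E,\covectors)$ and $(E',\covectors')$, there is a bijection $\pi\colon E\to E'$ and a sign vector $S\in\{\pm1\}^{E'}$ such that $\varphi$ carries each tope $T$ of $\covectors$ to the tope $S\cdot(T\circ\pi^{-1})$ of $\covectors'$. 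Thus, after renaming $E'$ to $E$ via $\pi$ and reorienting $\covectors'$ by $S$, the two COMs have literally the same set of topes as subsets of $\{\pm1\}^E$. Step three: now apply Proposition~\ref{topes}: a semisimple COM is uniquely determined by its tope set, so from the equality of tope sets we conclude $\covectors=\covectors'$. This chains the three ingredients together and finishes the proof.

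I expect the main obstacle to be bookkeeping rather than depth: one must check carefully that the graph automorphism delivered by the partial-cube uniqueness statement really corresponds to the combination of a coordinate bijection and a reorientation, and in particular that it is legitimate to absorb the coordinate bijection into an identification of ground sets (since the proposition speaks of ``a'' semisimple COM being determined up to reorientation, the ground set is only defined up to relabeling anyway). A secondary point requiring care is to confirm that semisimplicity is exactly the hypothesis needed so that no $\Theta$-class of the tope graph is ``invisible'': one must rule out coordinates on which $\covectors$ is constant (these are removed by restricting to $E_\pm$ in the definition of semisimple) and parallel coordinates (forbidden by (RN2$^*$)), since either would prevent reconstructing $E$ from the tope graph. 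Once these identifications are made explicit, the result follows formally from Propositions~\ref{topes} and~\ref{OMCtoTopeGraph} and the uniqueness of the hypercube embedding, with essentially no further computation.
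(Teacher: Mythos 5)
Your argument is correct and is essentially the paper's own proof: the paper likewise deduces the statement by combining Proposition~\ref{topes} (a semisimple COM is determined by its tope set), Proposition~\ref{OMCtoTopeGraph} (the tope graph is a partial cube), and the uniqueness of isometric embeddings of partial cubes into hypercubes up to hypercube automorphisms. Your additional bookkeeping about reorientations, coordinate relabeling, and the role of semisimplicity only makes explicit what the paper leaves implicit.
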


\section{Minimal generators of strong elimination systems}\label{sec:mingen}
We have seen in the preceding section that a COM is determined by its tope set. There is a
more straightforward way to generate any strong elimination system from bottom to top by
taking suprema. This generation process involves only some weaker forms of the axioms
(C) and (SE).

Let $(E,\covectors)$ be a system of sign vectors. Given $X,Y\in \covectors$ consider the
following set of sign vectors which partially ``conform'' to $X$ relative to subsets $A\subseteq S(X,Y)$:
\begin{align*}
{\mathcal W}_A(X,Y)&=\{ Z\in \covectors: Z^+\subseteq X^+\cup Y^+, Z^-\subseteq X^-\cup Y^-, \mbox{ and } S(X,Z)\subseteq E\setminus A\}\\
&=\{ Z\in \covectors: Z_g\in \{ 0,X_g,Y_g\} \mbox{ for all } g\in E, \mbox{ and } Z_h\in \{ 0,X_h\} \mbox{ for all } h\in A\}.
\end{align*}
For $A=\varnothing$ we use the short-hand ${\mathcal W}(X,Y)$, i.e.,
\begin{align*}
{\mathcal W}(X,Y)&=\{ Z\in \covectors: Z^+\subseteq X^+\cup Y^+, Z^-\subseteq X^-\cup Y^-\}
\end{align*}
and for the maximum choice $A\supseteq S(X,Y)$ we write ${\mathcal W}_{\infty}(X,Y)$, i.e.,
\begin{align*}
{\mathcal W}_{\infty}(X,Y)&=\{ Z\in {\mathcal W}(X,Y): S(X,Z)=\varnothing\}.
\end{align*}

Trivially, $X,X\circ Y\in {\mathcal W}_A(X,Y)$ and ${\mathcal W}_B(X,Y)\subseteq {\mathcal W}_A(X,Y)$ for $A\subseteq B\subseteq E$. Note that
$S(X,Z)\subseteq S(X,Y)$ for all $Z\in {\mathcal W}(X,Y)$. Each set ${\mathcal W}_A(X,Y)$ is closed under composition (and trivially is a downset with
respect to the sign ordering). For, if $V,W\in {\mathcal W}_A(X,Y),$ then $(V\circ W)^+\subseteq V^+\cup W^+$ and $(V\circ W)^-\subseteq V^-\cup W^-$
holds trivially, and further, if $e\in S(X,V\circ W)$, say, $e\in X^+$ and $e\in (V\circ W)^-\subseteq V^-\cup W^-$, then
$e\in S(X,V)$ or $e\in S(X,W),$ that is,
$$S(X,V\circ W)\subseteq S(X,V)\cup S(X,W)\subseteq E\setminus A.$$

Since each of the sets ${\mathcal W}_A(X,Y)$ is closed under composition, we may take the composition of all sign vectors
in ${\mathcal W}_A(X,Y)$. The result may depend on the order of the constituents.

Some features of strong elimination are captured by weak elimination:

\medskip\noindent
\begin{itemize}
\item[{\bf (WE)}]  for each pair $X,Y\in\covectors$ and $e\in  S(X,Y)$ there exists $Z \in  {\mathcal W}(X,Y)$ with $Z_e=0$.
\end{itemize}

\medskip\noindent Condition (WE) is in general weaker than (SE): consider, e.g., the four sign vectors $++,+-,--,00;$ the zero vector $Z$ would
serve all pairs $X,Y$ for (WE) but for $X=++$ and $Y=+-$ (SE) would require the existence of $+0$ rather than $00$. In the presence
of (IC), the strong and the weak versions of elimination are equivalent, that is, lopsided systems are characterized by
(IC) and (WE)~\cite{bachdrko-12}. With systems satisfying (WE) one can generate lopsided systems by taking the upper sets:

\begin{proposition}[\cite{bachdrko-12}]\label{lopsided_enveloppe} If $(E,{\mathcal K})$ is a system of sign vectors which
satisfies {\normalfont (WE)}, then $(E, \uparr{\mathcal K})$ is a lopsided system.
\end{proposition}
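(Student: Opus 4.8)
The goal is to show that if $(E,\mathcal K)$ satisfies (WE), then $(E,\uparr\mathcal K)$ is a lopsided system, i.e.\ satisfies (IC) and (SE). Property (IC) is immediate: $\uparr\uparr\mathcal K=\uparr\mathcal K$ because $\uparr$ is a closure operator on $(\{\pm1,0\}^E,\le)$ (the upset of an upset is itself), so (IC) holds by definition. Equivalently, for $X\in\uparr\mathcal K$ and any sign vector $W$ we have $X\le X\circ W$, and since $X\le X$ already witnesses $X\in\uparr\mathcal K$ via some $K\in\mathcal K$ with $K\le X\le X\circ W$, we get $X\circ W\in\uparr\mathcal K$. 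So the whole content is in verifying (SE) for $\uparr\mathcal K$.

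For (SE), take $X,Y\in\uparr\mathcal K$ and $e\in S(X,Y)$; we must produce $Z\in\uparr\mathcal K$ with $Z_e=0$ and $Z_f=(X\circ Y)_f$ for all $f\in E\setminus S(X,Y)$. Pick $X',Y'\in\mathcal K$ with $X'\le X$ and $Y'\le Y$. The first step is to replace $X',Y'$ by elements of $\mathcal K$ that agree in sign with $X,Y$ on a larger set. Here I would use (WE) iteratively: starting from $X'$, for each coordinate $g\in\underline{X}$ where $X'_g=0$, the pair $X'$ and (a suitable element of $\mathcal K$ below $X$) should allow me, via weak elimination or rather via composition inside $\mathcal W$, to fill in the coordinate with sign $X_g$. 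Actually the cleaner route: since $\mathcal K$ satisfies (WE), Proposition~\ref{lopsided_enveloppe} is exactly what we are proving, so I cannot invoke it — instead I should work directly. The key lemma to establish is: for any $X\in\uparr\mathcal K$ there is $\widehat X\in\mathcal K$ with $\widehat X\le X$ and $\underline{\widehat X}\supseteq\underline{X}\setminus(\text{something controllable})$; more simply, I claim there is $X^\ast\in\mathcal K$ sign-consistent with $X$ such that $X^\ast\circ X=X$ (i.e.\ $X^\ast\le X$) — which is just the defining property — and then handle coordinates via (WE).

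The heart of the argument is the following. Given $X,Y\in\uparr\mathcal K$ and $e\in S(X,Y)$, choose $X'\le X$, $Y'\le Y$ in $\mathcal K$. Then $e\in\underline X\cap\underline Y$ with $X_e=-Y_e$. I want $X',Y'$ to actually be nonzero at $e$ with opposite signs; if, say, $X'_e=0$, I use (WE) on a pair in $\mathcal K$ to repair this, or I argue that I may compose within $\mathcal W(X',\cdot)$ to get a member of $\mathcal K$ below $X$ that is nonzero at $e$ — this is where I expect to lean on the structure of the sets $\mathcal W_A$ and their composites $\widehat{\mathcal W}_A$ from the preceding subsection. Once $X',Y'\in\mathcal K$ are nonzero at $e$ with $X'_e=-Y'_e$, apply (WE) to the pair $X',Y'$ and $e$: there is $Z'\in\mathcal W(X',Y')\subseteq\mathcal K$ with $Z'_e=0$. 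By definition of $\mathcal W(X',Y')$, $Z'$ is sign-consistent with $X'\circ Y'$ and $(Z')^\pm\subseteq (X')^\pm\cup(Y')^\pm$. Now set $Z:=Z'\circ(X\circ Y)$. Since $Z'\in\mathcal K$, we have $Z\in\uparr\mathcal K$ (as $Z'\le Z$). We need $Z_e=0$: this requires $(X\circ Y)_e$ to not override — but $e\in S(X,Y)$ and $X,Y$ are nonzero at $e$, so $(X\circ Y)_e=X_e\ne 0$, which would make $Z_e=X_e\ne 0$. So the composition order is wrong; instead I should compose so that the coordinate $e$ stays $0$, which forces $e$ to be in the zero set of whatever I compose. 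The fix: on $S(X,Y)$ the target value of $Z$ is unconstrained except $Z_e=0$, so I can afford $Z$ to vanish on all of $S(X,Y)\cup\{e\}$ or take any values there. Concretely, restrict attention to $E\setminus S(X,Y)$: on this set $X$ and $Y$ are sign-consistent, so $X\circ Y$ restricted there is their common "join", and I want $Z$ to equal it there, be $0$ at $e$, and be anything on the rest of $S(X,Y)$.

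So the clean construction is: take $Z':=$ the $\mathcal W(X',Y')$-member from (WE) with $Z'_e=0$, and then set $Z:=Z'\circ U$ where $U$ is the sign vector equal to $(X\circ Y)_f$ on $E\setminus S(X,Y)$ and $0$ on $S(X,Y)$. Then $Z\ge Z'$, so $Z\in\uparr\mathcal K$; $Z_e=Z'_e=0$ since $e\in S(X,Y)$ means $U_e=0$, and $Z'_e=0$; and for $f\in E\setminus S(X,Y)$: if $Z'_f\ne0$ then $Z_f=Z'_f$, but $Z'$ is sign-consistent with $X'\circ Y'\le X\circ Y$ and $Z'_f\in\{(X')_f,(Y')_f,0\}\subseteq\{X_f,Y_f,0\}=\{(X\circ Y)_f,0\}$ since $f\notin S(X,Y)$, so $Z'_f\in\{(X\circ Y)_f\}$ when nonzero; if $Z'_f=0$ then $Z_f=U_f=(X\circ Y)_f$. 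Either way $Z_f=(X\circ Y)_f$. This verifies (SE).

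**Main obstacle.** The delicate point is the preliminary step of arranging that the chosen $X',Y'\in\mathcal K$ below $X,Y$ are nonzero with opposite signs at the specific coordinate $e\in S(X,Y)$, so that (WE) can be applied to that pair at $e$. If either is zero at $e$, (WE) gives nothing at $e$. I expect to resolve this by a separate sub-lemma: in a system satisfying (WE), for any $X\in\uparr\mathcal K$ and any $e\in\underline X$ there exists $\bar X\in\mathcal K$ with $\bar X\le X$ and $\bar X_e=X_e$. This should follow by iterated use of (WE)/composition within the $\mathcal W$-sets: start with any $X'\le X$ in $\mathcal K$; if $X'_e=0$, pick any $X''\le X$ in $\mathcal K$ with $X''_e\ne0$ (exists since $X\in\uparr\mathcal K$ and... actually this too needs care, as all elements of $\mathcal K$ below $X$ might be zero at $e$). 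If that genuinely fails — all of $\mathcal K\cap\mathop{\downarrow}\!X$ vanish at $e$ — then I claim we may simply redefine the target $Z$ to be $0$ at $e$ trivially and note the (SE) requirement at such an $e\in S(X,Y)$ is vacuous in the sense that $e$ can't actually lie in $S(X,Y)$ for any witnesses; more carefully, I would show $e\in S(X,Y)$ with $X,Y\in\uparr\mathcal K$ already forces the existence of $X',Y'\in\mathcal K$ nonzero at $e$, by composing along coordinates. Pinning down exactly this existence statement cleanly, using only (WE), is the real work; the rest is the routine composition bookkeeping sketched above.
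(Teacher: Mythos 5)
Your handling of the case $e\in S(X',Y')$ (both representatives nonzero at $e$ with opposite signs) is essentially correct and matches the paper's first case: apply (WE) in $\mathcal K$ to get $Z'\in\mathcal W(X',Y')$ with $Z'_e=0$, then pad with $(X\circ Y)_f$ off $S(X,Y)$ and use the upset freedom to conclude $Z\in\uparr\mathcal K$. But the step you yourself flag as ``the real work'' is a genuine gap, and the route you propose to close it cannot work. The sub-lemma you hope for --- that for every $X\in\uparr\mathcal K$ and $e\in\underline X$ there is $\bar X\in\mathcal K$ with $\bar X\le X$ and $\bar X_e=X_e$ --- is false: take $\mathcal K=\{\mathbf 0\}$, which satisfies (WE) vacuously; then $\uparr\mathcal K=\{\pm1,0\}^E$ and no element of $\mathcal K$ is nonzero anywhere. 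The same example refutes your fallback claim that $e\in S(X,Y)$ with $X,Y\in\uparr\mathcal K$ ``forces the existence of $X',Y'\in\mathcal K$ nonzero at $e$.'' So as written, the proof covers only the pairs $(X,Y,e)$ for which suitable representatives happen to exist, and the proposed repair mechanism has no proof and in fact no true statement behind it.

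The irony is that the case you cannot reach is the easy one, and no appeal to (WE) is needed there. If the chosen $V\le X$, $W\le Y$ in $\mathcal K$ do not have $e\in S(V,W)$, then (since $X_e=-Y_e\neq 0$ and $V_e\in\{0,X_e\}$, $W_e\in\{0,Y_e\}$) at least one of them, say $V$, has $V_e=0$. Now simply define $Z$ by $Z_g:=V_g$ for $g\in S(X,Y)$ and $Z_f:=(X\circ Y)_f$ for $f\in E\setminus S(X,Y)$. Then $Z_e=0$, and $V\le Z$ because $V_f\in\{0,X_f\}\le (X\circ Y)_f$ for $f\notin S(X,Y)$; hence $Z\in\uparr\mathcal K$ and $Z$ fulfills (SE) for $X,Y$ at $e$. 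This is exactly the paper's second case: the point of passing to the upset is that a zero of the representative at $e$ is an asset, not an obstacle, because you may fill in every other coordinate at will while keeping $Z_e=0$. With this dichotomy ($e\in S(V,W)$ versus $V_e=0$ or $W_e=0$) your argument closes; without it, it does not.
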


\begin{proof} We have to show that (WE) holds for $(E,\uparr{\mathcal K}).$ For $X,Y\in \uparr{\mathcal K}$ and some
element $e$ in $S(X,Y)$, pick $V,W\in {\mathcal K}$ with $V\le X$ and $W\le Y$. If $e\in S(V,W),$ then by (WE)
in $\mathcal K$ one obtains some $U\in\uparr{\mathcal K}$ such that $U_e=0$ and $U_f\le V_f\circ W_f\le X_f\circ Y_f$ for
all $f\in E\setminus S(X,Y)$. Then the sign vector $Z$ defined by $Z_g:=U_g$ for all $g\in S(X,Y)$ and $Z_f:=X_f\circ Y_f$
for all $f\in E\setminus S(X,Y)$ satisfies $U\le Z$ and hence belongs to $\uparr{\mathcal K}$. If $e\notin S(V,W)$, then
$V_e=0$, say. Define a sign vector $Z$ similarly as above: $Z_g:=V_g$ for $g\in S(X,Y)$ and $Z_f:=X_f\circ Y_f\ge V_f$ for
$f\in E\setminus S(X,Y)$. Then $Z\in \uparr{\mathcal K}$ is as required.
\end{proof}

This proposition applied to a COM $(E,\covectors)$ yields an associated lopsided systems $(E,\uparr\covectors)$ having
the same minimal sign vectors as $(E,\covectors)$. This system is referred to as the {\it lopsided envelope} of $(E,\covectors).$
In contrast to (SE) and (SE$^{=}$), the following variant of strong elimination allows us to treat the positions
$f\in E\setminus S(X,Y)$ one at a time:
\medskip\noindent
\begin{itemize}
\item[{\bf (SE1)}] for each pair $X,Y\in\covectors$ and $e\in  S(X,Y)$ and $f\in  E\setminus S(X,Y)$ there exists
$Z \in  {\mathcal W}(X,Y)$ such that $Z_e=0,$ and $Z_f=(X\circ Y)_f$.
\end{itemize}

Nevertheless, under composition axiom {\normalfont (C)}, all these variants of {\normalfont (SE)}
are equivalent:
\begin{lemma}\label{lem:auxilliary} Let $(E,\covectors)$ be a system of sign vectors which satisfies {\normalfont (C)}. Then
all three variants {\normalfont (SE)}, {\normalfont (SE$^=$)}, and {\normalfont (SE1)}
are equivalent.
\end{lemma}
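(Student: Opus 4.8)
The statement asserts that, in the presence of (C), the four strong-elimination variants (SE), (SE$^=$), (SE1), (SE1$^=$) are all equivalent. I would organize the proof as a cycle of implications, say (SE) $\Rightarrow$ (SE$^=$) $\Rightarrow$ (SE1$^=$) $\Rightarrow$ (SE1) $\Rightarrow$ (SE), adding whichever direct implications are cheap (e.g.\ (SE) $\Rightarrow$ (SE1) is immediate since the $Z$ produced by (SE) lies in $\mathcal{W}(X,Y)$, and (SE1) $\Rightarrow$ (SE1$^=$) is a trivial restriction). The non-trivial content is really two things: reducing from arbitrary pairs to pairs with equal support using (C), and assembling the ``one position $f$ at a time'' versions back into the ``all positions at once'' version, again using (C).

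\emph{Reduction to equal supports.} As already noted in the text preceding the lemma, if (C) holds then for $X,Y\in\covectors$ we have $X\circ Y,\,Y\circ X\in\covectors$ with $\underline{X\circ Y}=\underline{Y\circ X}=\underline{X}\cup\underline{Y}$ and $S(X\circ Y,\,Y\circ X)=S(X,Y)$, and moreover $(X\circ Y)\circ(Y\circ X)=X\circ Y$. So for a given pair $X,Y$ and $e\in S(X,Y)$, I would apply the equal-support hypothesis ((SE$^=$) or (SE1$^=$)) to the pair $X\circ Y,\,Y\circ X$ at $e$; the resulting $Z$ has $Z_e=0$ and agrees with $(X\circ Y)\circ(Y\circ X)=X\circ Y$ on $E\setminus S(X,Y)$, which is exactly what (SE) (resp.\ (SE1)) demands. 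This shows (SE$^=$) $\Rightarrow$ (SE) and (SE1$^=$) $\Rightarrow$ (SE1) under (C).

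\emph{Assembling one position at a time.} This is the step I expect to be the main obstacle, and it is where I would use the closure of $\mathcal{W}(X,Y)$ under composition (proved in the text) together with (C). Given $X,Y$ with $\underline{X}=\underline{Y}$ and $e\in S(X,Y)$, I want a single $Z\in\covectors$ with $Z_e=0$ and $Z_f=(X\circ Y)_f$ for all $f\in E\setminus S(X,Y)$. Using (SE1$^=$), for each $f\in E\setminus S(X,Y)$ I get some $Z^{(f)}\in\mathcal{W}(X,Y)$ with $Z^{(f)}_e=0$ and $Z^{(f)}_f=(X\circ Y)_f$. Now take the composition $Z:=Z\circ\bigcirc_{f}Z^{(f)}$, pre-composed with $X$ restricted appropriately — more precisely, I would compose (in any fixed order) all the $Z^{(f)}$ over $f\in E\setminus S(X,Y)$, and then compose the result with $X$ on the right to fill in the separator positions. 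Since each $Z^{(f)}\in\mathcal{W}(X,Y)$ and $\mathcal{W}(X,Y)$ is composition-closed, and $X\in\mathcal{W}(X,Y)$ as well, the composite $Z$ lies in $\mathcal{W}(X,Y)\subseteq\covectors$. One checks: $Z_e=0$ because every $Z^{(f)}_e=0$ and composing with $X$ does not change coordinate $e$ only if $X_e$ — wait, here one must be careful that $X_e$ could be nonzero, so the $X$-precomposition must be done so as not to overwrite position $e$; the clean fix is to note $e\in S(X,Y)$ and compose the $Z^{(f)}$'s first so that $Z_e=0$ already, then append $X\circ Y$ only on the separator coordinates via composing with $X$ and observing that coordinate $e$ of the composite equals the first nonzero among the $Z^{(f)}_e$, all of which are $0$, so $Z_e=0$; meanwhile for $f\in E\setminus S(X,Y)$ the composite has $Z_f=Z^{(g)}_f$ for the first $g$ with $Z^{(g)}_f\neq0$, and since all $Z^{(g)}\in\mathcal{W}(X,Y)$ are sign-consistent with $X\circ Y$ off the separator, and $Z^{(f)}_f=(X\circ Y)_f\neq0$, we get $Z_f=(X\circ Y)_f$. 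For positions in $S(X,Y)\setminus\{e\}$ we don't care what $Z$ does. That yields (SE1$^=$) $\Rightarrow$ (SE$^=$), and hence, chaining with the implications above and the trivial ones, all four are equivalent.

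\emph{Anticipated difficulty.} The only genuinely delicate point is bookkeeping in the composition: verifying that composing the finitely many $Z^{(f)}$ does not disturb coordinate $e$ (each contributes $0$ there, so it stays $0$) and does produce the correct value $(X\circ Y)_f$ at each target coordinate $f$ (because $Z^{(f)}$ itself already has the right nonzero value there and all constituents are sign-consistent off $S(X,Y)$, being in $\mathcal{W}(X,Y)$). Everything else — the support-equalization trick and the chain of implications — is routine once (C) and the composition-closure of $\mathcal{W}(X,Y)$ are in hand, both of which are available from the text.
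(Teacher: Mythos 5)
Your route is the same as the paper's: the trivial weakenings, the equal\-/support reduction obtained by replacing $X,Y$ with $X\circ Y,\,Y\circ X$ (which uses (C) and the identities $\underline{X\circ Y}=\underline{Y\circ X}$, $S(X\circ Y,Y\circ X)=S(X,Y)$, $(X\circ Y)\circ(Y\circ X)=X\circ Y$), and the reassembly of the one-position-at-a-time solutions $Z^{(f)}$ by composing them inside the composition-closed set $\mathcal{W}(X,Y)$. That is exactly how the paper argues.

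However, one step in your write-up is wrong as stated: the attempt to ``fill in the separator positions'' by composing with $X$. It is unnecessary, because (SE) and (SE$^=$) only demand $Z_e=0$ and $Z_f=(X\circ Y)_f$ for $f\in E\setminus S(X,Y)$; nothing at all is required at the coordinates of $S(X,Y)\setminus\{e\}$, so there is nothing to fill in. It is also harmful: since $e\in S(X,Y)$ we have $X_e\neq 0$, so whether $X$ is composed on the left or on the right of the composite $Z'$ of the $Z^{(f)}$'s (which has $Z'_e=0$), the resulting vector has $X_e\neq 0$ at coordinate $e$; your claim that coordinate $e$ of the composite ``equals the first nonzero among the $Z^{(f)}_e$, all of which are $0$'' simply ignores the appended $X$ and is false. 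The repair is to delete $X$ from the composition altogether: take $Z$ to be the composite, in any order, of the $Z^{(f)}$ over $f\in E\setminus S(X,Y)$. Then $Z\in\mathcal{W}(X,Y)\subseteq\covectors$ by composition-closure, $Z_e=0$ since every constituent vanishes at $e$, and for $f\notin S(X,Y)$ every constituent takes a value in $\{0,(X\circ Y)_f\}$ at $f$ while $Z^{(f)}_f=(X\circ Y)_f$, so $Z_f=(X\circ Y)_f$ (if $(X\circ Y)_f=0$ all constituents vanish at $f$ and the conclusion is automatic, which also disposes of your side worry about that value being zero). With that deletion your argument coincides with the paper's proof.
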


\begin{proof} The equivalence of (SE) and (SE$^=$) in the presence of (C) was argued at the end of Section~\ref{sec:basic_axioms}.
Trivially, (SE) implies (SE1). Conversely, if (SE1) holds, then for every $e\in S(X,Y)$ we obtain a set
$\{ Z^{(\{e\},f)}: f\in E\setminus S(X,Y)\}$
of solutions, one for each $f$. Then the composition in any order of these solutions  yields a solution $Z$ for
(SE), because $Z^{(\{e\},f)}\le Z$ for all $f\in E\setminus S(X,Y)$ and $Z_e=0,$ whence $Z_f=(X\circ Y)_f$ for
all $f\in E\setminus S(X,Y)$ and $Z_e=0$.
\end{proof}

Since strong elimination captures some features of composition, one may wonder whether (C) can be
somewhat weakened in the presence of (SE) or (SE1). Here suprema alias conformal compositions
come into play:

\medskip\noindent
\begin{itemize}
\item[{\bf (CC)}] $X\circ Y \in  \covectors$  for all $X,Y \in  \covectors$ with $S(X,Y)=\varnothing$.
\end{itemize}
Recall that $X$ and $Y$ are sign-consistent, that is, $S(X,Y)=\varnothing$ exactly when $X$ and $Y$ commute: $X\circ Y=Y\circ X$.
We say that a composition $X^{(1)}\circ\ldots\circ X^{(n)}$ of sign vectors is {\it  conformal} if it constitutes the
supremum of $X^{(1)},\ldots,X^{(n)}$ with respect to the sign ordering. Thus, $X^{(1)},\ldots,X^{(n)}$ commute exactly
when they are bounded from above by some sign vector, which is the case when the set of all $X_e^{(i)}$ $(1\le i\le n)$
includes at most one non-zero sign (where $e$ is any fixed element of $E$). If we wish to highlight this property we denote
the supremum of $X^{(1)},\ldots,X^{(n)}$ by $\bigodot_{i=1}^nX^{(i)}$ or $X^{(1)}\odot\ldots \odot X^{(n)}$ (instead
of $X^{(1)}\circ \ldots \circ X^{(n)}$). Clearly the conformal property is Helly-type in the sense that a set of
sign vectors has a supremum if each pair in that set does.

Given any system $\mathcal{K}$ of sign vectors on $E$ define $\bigodot\mathcal{K}$ as the set of all (non-empty)
suprema of members from $\mathcal{K}$. We say that a system $(E,\mathcal{K})$ of sign vectors {\it generates} a
system $(E,\covectors)$ if $\bigodot\mathcal{K}=\covectors$. We call a sign
vector $X\in\covectors$   ({\it supremum-}){\it irreducible} if it does not equal the (non-empty!)
conformal composition of any sign vectors from $\covectors$ different from $X$. Clearly, the irreducible sign
vectors of $\covectors$ are unavoidable when generating $\covectors$. We denote the set of irreducibles of
$\covectors$ by $\mathcal{J}=\mathcal{J}(\covectors)$.

\begin{theorem} \label{thm:cc_ses} Let $(E,\covectors)$ be a system of sign vectors. Then the following conditions are equivalent:
\begin{itemize}
 \item[(i)] $(E,\covectors)$ is a strong elimination system;
 \item[(ii)] $\covectors$ satisfies {\normalfont (CC)} and {\normalfont (SE1)};
 \item[(iii)] $\covectors$ satisfies {\normalfont (CC)} and some set $\mathcal{K}$ with
 $\mathcal{J}\subseteq \mathcal{K}\subseteq\covectors$ satisfies {\normalfont (SE1)}.
 \item[(iv)] $\covectors$ satisfies {\normalfont (CC)} and its set $\mathcal{J}$ of
 irreducibles satisfies {\normalfont (SE1)}.
\end{itemize}
\end{theorem}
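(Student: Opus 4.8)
The plan is to prove the chain of implications $(i)\Rightarrow(ii)\Rightarrow(iii)\Rightarrow(iv)\Rightarrow(i)$, where the only substantial work lies in the last step $(iv)\Rightarrow(i)$; the first three implications are essentially bookkeeping. For $(i)\Rightarrow(ii)$: a strong elimination system satisfies (C), hence in particular (CC), and by Lemma~\ref{lem:auxilliary} the axioms (SE) and (SE1) are equivalent in the presence of (C); note that (SE1) produces its witness $Z$ inside ${\mathcal W}(X,Y)$ automatically, since the elimination vector coincides with $X\circ Y$ off $S(X,Y)$ and its support is contained in $\underline X\cup\underline Y$. The implication $(ii)\Rightarrow(iii)$ is trivial with $\mathcal{K}=\covectors$, and $(iii)\Rightarrow(iv)$ follows because the irreducibles $\mathcal J$ lie in every generating set, so if some $\mathcal K\supseteq\mathcal J$ satisfies (SE1) we still need to check that $\mathcal J$ itself does; here I would argue that (SE1) applied to two irreducibles $X,Y\in\mathcal J$ with witness $Z\in{\mathcal W}(X,Y)\subseteq\covectors$ can be replaced by an irreducible witness $Z'\le Z$ with $Z'_e=0$ and $Z'_f=(X\circ Y)_f$, obtained by writing $Z$ as a conformal composition of irreducibles and selecting one of them that is still $0$ at $e$ and sign-consistent enough at $f$ — this is the one delicate point among the easy implications, and it uses that conformal compositions of members of ${\mathcal W}(X,Y)$ stay in ${\mathcal W}(X,Y)$, established in the text before Theorem~\ref{thm:cc_ses}.

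The heart of the theorem is $(iv)\Rightarrow(i)$: assuming (CC) together with (SE1) for the irreducibles $\mathcal J$, we must recover full (C) and full (SE). For (C), I would show by induction on $\#(\underline X\cup\underline Y)$ (or on the number of irreducible constituents needed to build $X$ and $Y$) that $X\circ Y\in\covectors$ for arbitrary $X,Y\in\covectors$. Writing $X=\bigodot_{i}A^{(i)}$ and $Y=\bigodot_{j}B^{(j)}$ as conformal compositions of irreducibles, the task reduces to controlling compositions of irreducibles; when an irreducible $A$ and an irreducible $B$ are sign-consistent, $A\circ B\in\covectors$ by (CC), and the general composition $X\circ Y$ is assembled from such pieces together with the conformal-composition closure. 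The real obstacle is the non-sign-consistent case, where $S(A,B)\neq\emptyset$: here one applies (SE1) for $\mathcal J$ to kill the separator coordinates one at a time, producing vectors that agree with $A\circ B$ off the separator, and then reassembles $X\circ Y$ as a supremum of these. I expect the bracketing/order issues around $\widehat{\mathcal W}_A(X,Y)$ to require care: one must verify that the composite one builds really equals the intended sign vector $X\circ Y$ on every coordinate, using that all the (SE1)-witnesses lie below the relevant $\widehat{\mathcal W}$ and that taking their composition in any order lands on the common value $X\circ Y$ outside the separator.

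Once (C) is in hand, (SE) follows: by Lemma~\ref{lem:auxilliary} it suffices to establish (SE1), and (SE1) for arbitrary $X,Y\in\covectors$ is derived from (SE1) for $\mathcal J$ by decomposing $X,Y$ conformally into irreducibles, applying the irreducible version to the relevant constituents straddling the coordinate $e\in S(X,Y)$, and composing the resulting witnesses (now legitimately, since (C) holds) to obtain a single $Z\in{\mathcal W}(X,Y)$ with $Z_e=0$ and $Z_f=(X\circ Y)_f$ for all $f\notin S(X,Y)$. I anticipate that the main subtlety throughout is not any single computation but the organization of the induction: one wants a single statement — roughly, ``for every $X,Y\in\covectors$, $X\circ Y\in\covectors$ and the (SE1)-instance for $(X,Y,e,f)$ has a solution in $\covectors$'' — proved simultaneously by induction on a complexity measure such as $\#S(X,Y)$ plus the number of irreducible pieces, so that (C) and (SE1) feed into each other cleanly rather than being proved in sequence. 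Keeping track of which composites lie in which ${\mathcal W}_A(X,Y)$, and hence coincide with $X\circ Y$ off $A$, will be the bookkeeping bottleneck.
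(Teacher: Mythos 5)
Your overall route coincides with the paper's: the reordering of the cycle ((i)$\Rightarrow$(ii)$\Rightarrow$(iii)$\Rightarrow$(iv)$\Rightarrow$(i) instead of the paper's (i)$\Rightarrow$(ii)$\Rightarrow$(iv)$\Rightarrow$(iii)$\Rightarrow$(i)) is immaterial; your passage from a witness $Z$ in $\mathcal{K}$ to an irreducible witness, by writing $Z$ as a supremum of irreducibles and picking a constituent carrying the value $(X\circ Y)_f$ at $f$ (it is automatically $0$ at $e$ and stays in ${\mathcal W}(X,Y)$, a downset), is exactly the paper's argument for (ii)$\Rightarrow$(iv); and your plan for the hard implication --- iterate (SE1) over the separator to manufacture companions that are sign-consistent with $X$, reassemble $X\circ Y$ as a supremum, then obtain (SE1) for $\covectors$ by decomposing $X,Y$ into irreducibles and composing the pairwise witnesses, and conclude via Lemma~\ref{lem:auxilliary} --- is the paper's mechanism.

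There is, however, a genuine gap at the point you set aside as bookkeeping. The (SE1)-iteration is only available for pairs both of whose members lie in $\mathcal{J}$, so what it delivers is: for $A,B\in\mathcal{J}$, the composite $A\circ B$ equals $A\odot\bigodot_f B^{(f)}$ with $B^{(f)}\in{\mathcal W}_{\infty}(A,B)\cap\mathcal{J}$, hence lies in $\bigodot\mathcal{J}=\covectors$. But for general $X,Y\in\covectors$ the vector $X\circ Y$ is a composition of \emph{many} irreducibles, and nothing in your sketch shows that such multi-term compositions land back in $\bigodot\mathcal{J}$: you cannot absorb one irreducible of $Y$ at a time, because after the first absorption the left factor is no longer irreducible and (SE1) for $\mathcal{J}$ no longer applies to it. This is precisely where the paper does real work: it proves by induction on the number of factors that any composition of members of $\mathcal{K}$ is a supremum of members of $\mathcal{K}$, the crucial point being that the second-level pieces $Z^{(j)}$ arising from the pairwise representations $X\circ Y^{(i)}=X\odot Z^{(m(i-1)+1)}\odot\cdots\odot Z^{(mi)}$ pairwise commute, since $Z^{(j)},Z^{(k)}\le (X\circ Y^{(h)})\circ Y^{(i)}=(X\circ Y^{(i)})\circ Y^{(h)}$; only then does their supremum exist, lie in $\covectors$ by iterated (CC), and equal the long composition. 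Your alternative of a simultaneous induction establishing (C) and (SE1) together is plausible in principle but is not carried out, so the sentence ``the general composition $X\circ Y$ is assembled from such pieces together with the conformal-composition closure'' is exactly the unproved step. A minor further slip: each companion produced by the iteration is guaranteed to agree with $X\circ Y$ only at its designated coordinate $f$ (besides being sign-consistent with $X$), not ``off the separator''; the assembly still works because the supremum includes $X$ itself and, for each $f$, one companion supplying the value $(X\circ Y)_f$.
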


\begin{proof} The implication (i)$\implies$(ii) is trivial.
Now, to see (ii)$\implies$(iv) let $(E,\covectors)$ satisfy (CC) and (SE1). For $X,Y\in {\mathcal J},$ $e\in S(X,Y),$
and $f\in E\setminus S(X,Y)$ we first obtain $Z\in \covectors$ with $Z_e=0$ and $Z_f=(X\circ Y)_f$. Since $Z$ is
the supremum of some $Z^{(1)},\ldots,Z^{(n)}$ from $\mathcal J$, there must be an index $i$ for which $Z^{(i)}_f=Z_f$
and trivially $Z_e^{(i)}=0$ holds. Therefore ${\mathcal J}$ satisfies (SE1). This proves (iv). Furthermore,
(iv)$\implies$(iii) is trivial.

As for (iii)$\implies$(i) assume that (SE1) holds in $\mathcal K$.  The first task is to show that the composite
$X\circ Y$ for $X,Y\in {\mathcal K}$ can be obtained as a conformal composite (supremum) of $X$ with members $Y^{(f)}$
of ${\mathcal K}$, one for each $f\in E\setminus S(X,Y).$ Given such a position $f$, start an iteration with
$Z^{(\varnothing,f)}:=Y,$ and as long as $A\ne S(X,Y),$ apply (SE1) to $X,Z^{(A,f)}\in {\mathcal K}$, which then returns a sign vector
$$Z^{(A\cup \{ e\},f)}\in {\mathcal W}_A(X,Z^{(A,f)})\cap {\mathcal K}\subseteq {\mathcal W}_A(X,Y)\cap {\mathcal K}\mbox{ with }$$
$$Z^{(A\cup \{ e\},f)}_e=0 \mbox{ and } Z^{(A\cup \{ e\},f)}_f=(X\circ Z^{(A,f)})_f=(X\circ Y)_f.$$

In particular, $Z^{(A\cup \{ e\},f)}\in {\mathcal W}_{A\cup\{e\}}(X,Y)\cap {\mathcal K}$.
Eventually, the iteration stops with
\begin{align*}
&Y^{(f)}:=Z^{(S(X,Y),f)}\in {\mathcal W}_{\infty}(X,Y)\cap {\mathcal K} \mbox{ satisfying }\\
&Y^{(f)}_f=(X\circ Y)_f \mbox{ and } (X\circ Y^{(f)})_e=X_e \mbox{ for all } e \mbox{ separating } X  \mbox{ and } Y.
\end{align*}
Now take the supremum of $X$ and all $Y^{(f)}:$ then
$$X\circ Y= X\odot\bigodot_{f\in E\setminus S(X,Y)} Y^{(f)}$$
constitutes the desired representation.

Next consider a composition $X\circ X^{(1)}\circ\ldots\circ X^{(n)}$ of $n+1\ge 3$ sign vectors from $\mathcal K$.
By induction on $n$ we may assume that
$$X^{(1)}\circ\ldots \circ X^{(n)}=Y^{(1)}\odot\ldots \odot Y^{(m)}$$
where $Y^{(i)}\in {\mathcal K}$ for all $i=1,\ldots, m$. Since any supremum in $\{ \pm 1,0\}^E$ needs at most
$\# E$ constituents, we may well
choose $m=\# E$. Similarly, as the case $n=1$ has been dealt with, each $X\circ Y^{(i)}$ admits a commutative representation
$$X\circ Y^{(i)}=X\odot Z^{(m(i-1)+1)}\odot Z^{(m(i-1)+2)}\odot\ldots\odot Z^{(mi)} ~~~~(i=1,\ldots,m).$$
We claim that $Z^{(j)}$ and $Z^{(k)}$ commute for all $j,k\in \{ 1,\ldots, m^2\}$. Indeed,
$$Z^{(j)}\le X\circ Y^{(h)} \mbox{ and } Z^{(k)}\le X\circ Y^{(i)} \mbox{ for some } h,i\in \{ 1,\ldots, m\}.$$
Then $$Z^{(j)},Z^{(k)}\le (X\circ Y^{(h)})\circ Y^{(i)}=(X\circ Y^{(i)})\circ Y^{(h)}$$
because $Y^{(h)}$ and $Y^{(i)}$ commute, whence $Z^{(j)}$ and $Z^{(k)}$ commute as well. Therefore
\begin{align*}
X\circ X^{(1)}\circ\ldots\circ X^{(n)}&=X\circ Y^{(1)}\circ\ldots\circ Y^{(m)}= (X\circ Y^{(1)})\circ\ldots\circ (X\circ Y^{(m)})\\
&=X\odot Z^{(1)}\odot\ldots\odot Z^{(m^2)}
\end{align*}
gives the required representation. We conclude that $(E,\covectors)$ satisfies (C).

To establish (SE1) for $\covectors$, let  $X=X^{(1)}\odot\cdots \odot X^{(n)}$ and  $Y=Y^{(1)}\odot\cdots \odot Y^{(m)}$
with  $X^{(i)},Y^{(j)}\in {\mathcal K}$ for all $i,j$.
Take $e,f\in E$ such that $e$ separates $X$ and $Y$ and $f$ does not. We may assume that  $X^{(i)}_e=X_e$
for $1\le i\le h$,  $Y^{(j)}_e=Y_e$  for $1\le j\le k$, and  equal to zero otherwise
(where  $h,k\ge 1$). Since $\mathcal K$ satisfies (SE1) there exists $Z^{(i,j)}\in {\mathcal W}(X^{(i)},Y^{(j)})\cap {\mathcal K}$
such that $Z^{(i,j)}_e=0$ and $Z^{(i,j)}_f=(X^{(i)}\circ Y^{(j)})_f$ for $i\leq h$ and $j\leq k$.  Then the composition of
all $Z^{(i,1)}$ for $i\le h$, $X^{(i)}$ for $i>h$ and all $Z^{(1,j)}$ for $j\le k$, $Y^{(j)}$ for $j>k$ yields the
required sign vector $Z\in {\mathcal W}(X,Y)$ with $Z_e=0$ and $Z_f=(X\circ Y)_f$. We conclude that $(E,\covectors)$ is
indeed a strong elimination system by Lemma~\ref{lem:auxilliary}.
\end{proof}

From the preceding proof of (iv)$\implies$(i) we infer that the (supremum-)irreducibles of a strong elimination system
$(E,\covectors)$  are a fortiori irreducible with respect to arbitrary composition.

\section{Cocircuits of COMs}\label{sec:cocircuit}
In an OM, a cocircuit is a support-minimal non-zero covector, and the cocircuits form the unique minimal generating
system for the entire set of covectors provided that composition over an empty index set is allowed. Thus, in our context
the zero vector would have to be added to the generating set, i.e., we would regard it as a cocircuit as well.
The cocircuits of COMs that we will consider next should on the one hand generate the entire system and on the other
hand their restriction to any maximal face should be the set of cocircuits of the oriented matroid corresponding to
that face via Lemma~\ref{lem:fiber}.

For any $\mathcal{K}$ with $\mathcal{J}=\mathcal{J}(\covectors)\subseteq\mathcal{K}\subseteq\covectors$ denote by
$\Min(\mathcal{K})$ the set of all minimal sign vectors of $\mathcal{K}$. Clearly, $\Min(\bigodot\mathcal{K})=\Min(\mathcal{K})=\Min(\mathcal{J})$.
We say that $Y$ \emph{covers} $X$ in $\covectors=\bigodot\mathcal{J}$ (in symbols: $X \prec Y$) if $X < Y$ holds and there is
no sign vector $Z \in \covectors$ with $X < Z < Y$. The following set $\mathcal{C}$ is intermediate between $\mathcal{J}$ and $\covectors$:
$$\mathcal{C}=\mathcal{C}(\covectors):=\mathcal{J}(\covectors)\cup\{X\in\covectors: W\prec X\text{ for some }W\in\Min(\covectors)\}.$$
Since $\Min(\covectors)=\Min(\mathcal{J})$ and every cover $X\notin\mathcal{J}$ of some $W\in\Min(\mathcal{J})$ is above some other
$V\in\Min(\mathcal{J})$, we obtain:
$$\mathcal{C}=\mathcal{C}(\mathcal{J}):=\mathcal{J}\cup\{W\odot V: V,W\in \Min(\mathcal{J})\text{ and } W\prec W\odot V\}.$$

We will make use of the following variant of face symmetry restricted to comparable covectors:
\begin{itemize}
\item[{\bf (FS$^\le$)}] $X\circ -Y\in \covectors$ for all $X\le Y$ in $\covectors$.
\end{itemize}

Note that (FS) and (FS$^\le$) are equivalent in any system $\covectors$ satisfying (C), as one can let $X\circ Y$
substitute $Y$ in (FS). We can further weaken face symmetry by restricting it to particular covering pairs $X\prec Y$:

\begin{itemize}
\item[{\bf (FS$^{\prec}$)}] $W\circ -Y\in {\covectors}$ for all $W\in \Min({\covectors})$ and $Y\in\covectors$
with $W\prec Y$ in $\covectors$, or equivalently,
\item[] $W\circ -Y\in {\mathcal{C}}$ for all $W\in \Min(\mathcal{C})$ and $Y\in\mathcal{C}$ with $W\prec Y$.
\end{itemize}

Indeed, since sign reversal constitutes an order automorphism of $\{\pm1, 0\}^E$, we readily infer that in
(FS$^{\prec}$) $W\circ -Y$ covers $W$, for if there was $X\in\covectors$ with $W\prec X<W\circ -Y$, then
$W<W\circ -X<W\circ-(W\circ -Y)=W\circ Y=Y$, a contradiction.

To show that (FS$^{\prec}$) implies (FS$^\le$) takes a little argument, as we will see next.

\begin{proposition}\label{prop:cc_com}
Let $(E,\mathcal{J})$ be a system of sign vectors.
Then $(E, \bigodot\mathcal{J})$ is a COM such that $\mathcal{J}$ is its set of irreducibles
if and only if $\mathcal{C}=\mathcal{C}(\mathcal{J})$ satisfies {\normalfont (FS$^{\prec}$)}
and $\mathcal{J}$ satisfies {\normalfont (SE1)} and
\begin{itemize}
\item[{\bf (IRR)}] if $X=\bigodot_{i=1}^nX_i$ for $X,X_1, \ldots, X_n\in\mathcal{J} (n\geq 2)$,
then $X=X_i$ for some $1\leq i\leq n$.
\end{itemize}
\end{proposition}

\begin{proof}
First, assume that $(E,\covectors=\bigodot\mathcal{J})$ is a COM with $\mathcal{J}=\mathcal{J}(\covectors)$.
From Theorem~\ref{thm:cc_ses} we know that $\mathcal{J}$ satisfies (SE1), while (IRR) just expresses irreducibility.
Since $\covectors$ is the set of covectors of a COM, from the discussion preceding the theorem it follows that
$\covectors$ satisfies (FS$^{\prec}$). Consequently,  $\mathcal{C}=\mathcal{C}(\mathcal{J})$ satisfies (FS$^{\prec}$).

Conversely, in the light of Theorem~\ref{thm:cc_ses}, it remains to prove that (FS$^{{\prec}}$) for
$\mathcal C$ implies (FS$^{{\le}}$) for  $(E,\covectors)$. Note that for $W<X<Y$ in $\covectors$ we have $X\circ -Y=X\circ W\circ -Y$,
whence for $W<Y\in \covectors$ we only need to show $W\circ -Y\in \covectors$ when $W$ is a minimal sign vector of $\covectors$
(and thus belonging to ${\mathcal J}\subseteq {\mathcal C}$). Now suppose that $W\circ -Y\notin \covectors$ for
some covector $Y$ 
such that $\#Y^0$ is as large as possible. Thus as $Y\notin {\mathcal C}$ there exists
$X\in \covectors$ with $W\prec X<Y$. By (FS$^{\prec}$), $W\circ -X\in \covectors$ holds. Pick any element
$e\in S(W\circ -X\circ Y,Y)=W^0\cap\underline{X}$ and choose some $Z\in \covectors$ with $Z_e=0$ and $Z_f=(W\circ -X\circ Y)_f$
for all $f\in E\setminus S(W\circ -X\circ Y,Y)$
by virtue of (SE). In particular, $Y=X\circ Z$. Then necessarily $W<Z$ and $Y^0\cup \{ e\}\subseteq Z^0$, so
that $W\circ -Z\in \covectors$ by the maximality hypothesis. Therefore with Theorem~\ref{thm:cc_ses} we get
$$W\circ -Y=W\circ -(X\circ Z)=(W\circ -X)\circ(W\circ -Z)\in \covectors,$$
which is  a contradiction. This establishes (FS$^{\le}$) for $\covectors$ and thus completes the
proof of Proposition~\ref{prop:cc_com}.
\end{proof}

Proposition~\ref{prop:cc_com} yields the following alternative axiomatization of COMs in terms of covectors,
that is of independent interest:

\begin{corollary}\label{cor:covectors} A system $(E,\covectors)$ of sign vectors is a COM if and
only if $(E,\covectors)$ satisfies {\normalfont (CC)}, {\normalfont (SE1)} and {\normalfont (FS$^{\prec}$)}.
\end{corollary}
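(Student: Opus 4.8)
The plan is to derive Corollary~\ref{cor:covectors} directly from the two structural characterizations already established, namely Theorems~\ref{thm:cc_ses} and~\ref{thm:cc_com}, rather than arguing from scratch. First I would prove the forward direction: suppose $(E,\covectors)$ is a COM. By definition this means $\covectors$ satisfies (FS) and (SE); since (FS) implies (C), Lemma~\ref{lem:auxilliary} and Theorem~\ref{thm:cc_ses} give that $\covectors$ satisfies (SE1), and (C) trivially yields (CC) as a special case. For (FS$^{\prec}$), note that it is precisely the restriction of (FS$^\le$) to minimal covers $W\prec Y$ with $W\in\Min(\covectors)$; since (FS) and (FS$^\le$) are equivalent under (C) (as remarked after the statement of (FS$^\le$)), a COM satisfies (FS$^\le$) and hence its weakening (FS$^{\prec}$). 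This handles the ``only if'' direction.

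For the converse, assume $(E,\covectors)$ satisfies (CC), (SE1), and (FS$^{\prec}$); I want to conclude it is a COM, i.e., that it satisfies (C), (FS), and (SE). The cleanest route is to reduce to Theorem~\ref{thm:cc_com} by setting $\mathcal{J}:=\mathcal{J}(\covectors)$, the set of supremum-irreducibles. The first thing to check is that $\covectors=\bigodot\mathcal{J}$, i.e., that every covector is a conformal composition of irreducibles — this follows from (CC) together with the fact that $\{\pm1,0\}^E$ is finite, so one can repeatedly decompose any reducible covector into conformal composites of strictly smaller ones until only irreducibles remain (the finiteness of the ground set bounds the number of constituents by $\#E$, guaranteeing termination). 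Once $\covectors=\bigodot\mathcal{J}$ is known, (IRR) holds automatically since it merely restates the definition of irreducibility, and (SE1) passes from $\covectors$ down to $\mathcal{J}$ by exactly the argument in the proof of (ii)$\implies$(iv) of Theorem~\ref{thm:cc_ses}. Finally, (FS$^{\prec}$) for $\covectors$ gives (FS$^{\prec}$) for $\mathcal{C}=\mathcal{C}(\mathcal{J})$ because $\Min(\mathcal{C})=\Min(\covectors)$ and the covers of minimal elements inside $\mathcal{C}$ are the same as in $\covectors$ (the covers of minimal elements are, by construction, all contained in $\mathcal{C}$). Thus all hypotheses of Theorem~\ref{thm:cc_com} are met, so $(E,\bigodot\mathcal{J})=(E,\covectors)$ is a COM.

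The main obstacle I anticipate is the bookkeeping in verifying that (CC) alone suffices to generate $\covectors$ from its irreducibles, and more subtly that (FS$^{\prec}$) as a hypothesis on $\covectors$ (rather than on the distinguished set $\mathcal{C}$) transfers correctly. One must be careful that the set of covectors covering a minimal element of $\covectors$ is genuinely contained in $\covectors$ — which it is, trivially — and that these covers are conformal composites $W\odot V$ of two minimal elements, which is the content of the identity $\mathcal{C}(\covectors)=\mathcal{C}(\mathcal{J})$ recorded just before Theorem~\ref{thm:cc_com}. Apart from this, the argument is a short matter of matching hypotheses, since all the real work — the equivalence (i)$\iff$(ii) in Theorem~\ref{thm:cc_ses} and the implication (FS$^{\prec}$)$\implies$(FS$^\le$) in Theorem~\ref{thm:cc_com} — has already been done. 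I would therefore keep the proof to a few lines, essentially: ``Combine Theorems~\ref{thm:cc_ses} and~\ref{thm:cc_com} with $\mathcal{J}:=\mathcal{J}(\covectors)$, noting that (CC) guarantees $\covectors=\bigodot\mathcal{J}$ and that (FS$^{\prec}$) for $\covectors$ is equivalent to (FS$^{\prec}$) for $\mathcal{C}(\mathcal{J})$.''
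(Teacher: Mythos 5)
Your argument is correct and is essentially the paper's intended derivation: the corollary is stated there without proof as an immediate consequence of Theorems~\ref{thm:cc_ses} and~\ref{thm:cc_com}, with exactly the bookkeeping you supply (that (CC) gives $\covectors=\bigodot\mathcal{J}(\covectors)$, that (SE1) descends to $\mathcal{J}$ as in (ii)$\implies$(iv), and that (FS$^{\prec}$) for $\covectors$ transfers to $\mathcal{C}(\mathcal{J})$ since covers of minimal elements lie in $\mathcal{C}$). No gaps to report.
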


Let us now advance towards the axiomatization of COMs in terms of cocircuits.
Given a COM $(E,\covectors)$, we call the minimal sign vectors of $\covectors$ the {\it improper cocircuits} of $(E,\covectors)$.
A {\it proper cocircuit} is any sign vector $Y\in\covectors$ which {\it covers} some improper cocircuit $X$. {\it Cocircuit} then
refers to either kind, improper or proper. Hence, $\mathcal{C}(\covectors)$ is the set of all cocircuits of $\covectors$.
Note that in oriented matroids the zero vector is the only improper cocircuit and the usual OM cocircuits
are the proper cocircuits in our terminology. In lopsided systems $(E,\covectors)$, the improper cocircuits are the barycenters of maximal
hypercubes~\cite{bachdrko-12}. In a COM improper circuits are irreducible, but not all proper circuits need to be irreducible.
Here is the main result of this section.

\begin{theorem}\label{thm:coc}
Let $(E,\mathcal{C})$ be a system of sign vectors and let $\covectors:= \bigodot\mathcal{C}$. Then $(E,\covectors)$ is a COM
such that $\mathcal{C}$ is its set of cocircuits if and only if $\mathcal{C}$ satisfies {\normalfont (SE1)},{\normalfont (FS$^{\prec}$)}, and
\medskip\begin{itemize}
\item[{\bf (COC)}] $\mathcal{C}=\Min(\mathcal{C})\cup\{Y\in\bigodot\mathcal{C}: W\prec Y\text{ for some }W\in\Min(\mathcal{C})\}.$
\end{itemize}
\end{theorem}

\begin{proof} Let $(E,\covectors)$ be a COM and $\mathcal{C}$ be its set of cocircuits.  By Proposition~\ref{prop:cc_com},
$\mathcal{C}$ satisfies (FS$^{\prec}$). From the proof of Theorem \ref{thm:cc_ses}, part (ii)$\Rightarrow$(iv), we know that a sign vector
$Z$ demanded in (SE1) could always be chosen from the irreducibles, which are particular cocircuits. Therefore
${\mathcal C}= {\mathcal C}(L)$ satisfies (SE1). Finally, (COC) just expresses that $\mathcal{C}$ exactly comprises
the cocircuits of the set $\covectors$ it generates.

Conversely, $\covectors$ satisfies (CC) by definition. Since $\mathcal{J}(\covectors)\subseteq\mathcal{C}$ and
$\mathcal{C}$ satisfies (SE1), applying Theorem~\ref{thm:cc_ses} we conclude that $\mathcal{J}$ satisfies (SE1).
Consequently, as $\mathcal{C}$ satisfies (FS$^{\prec}$) and $\mathcal{J}$ satisfies (SE1), $\covectors$ is a COM
by virtue of Proposition~\ref{prop:cc_com}.
\end{proof}

To give a simple class of planar examples, consider the hexagonal grid, realized as the 1-skeleton of the regular tiling of
the plane with (unit) hexagons. A {\it benzenoid graph} is the $2$-connected graph formed by the vertices and edges from
hexagons lying entirely within the region bounded by some cycle in the hexagonal grid; the given cycle thus constitutes
the boundary of the resulting benzenoid graph~\cite{ham-11}. A \emph{cut segment} is any minimal (closed) segment of a
line perpendicular to some edge and passing through its midpoint such that the removal of all edges cut by the segment
results in exactly two connected components, one signed $+$ and the other $-$.
The ground set $E$ comprises all these cut segments. The set $\covectors$ then consists of all sign
vectors corresponding to the vertices and the barycenters (midpoints) of edges and 6-cycles (hexagons)
of this benzenoid graph. For verifying that $(E,\covectors)$ actually constitutes a COM, it is instructive
to apply Proposition~\ref{prop:cc_com}: the set $\mathcal J$ of irreducible members of $\covectors$ encompasses
the barycenter vectors of the boundary edges and of all hexagons of the benzenoid. The barycenter vectors of two
hexagons/edges/vertices are sign consistent exactly when they are incident. Therefore $\mathcal J$ generates
all covectors of  $\covectors$  via (CC). Condition (FS$^\prec$) is realized through inversion of an edge
at the center of a hexagon it is incident with. Condition (SE1) is easily checked by considering two cases
each (depending on whether $Z$ is eventually obtained as a barycenter vector of a hexagon or of an edge) for
pairs $X,Y$ of hexagon/edge barycenters.

\section{Hyperplanes, carriers, and halfspaces}\label{sec:hyperplanes}
For a system $(E,\covectors)$ of sign vectors, a {\it hyperplane} of $\covectors$ is the set
$$\covectors_e^0:=\{ X\in \covectors: X_e=0\} \mbox{ for some } e\in E.$$
The {\it carrier} $N(\covectors_e^0)$ of the hyperplane $\covectors_e^0$ is the union of all faces $F(X')$ of
$\covectors$ with $X'\in \covectors_e^0,$ that is,
\begin{align*}
N(\covectors_e^0):=\{ X\in \covectors: W\leq X \mbox{ for some } W\in \covectors_e^0\}.
\end{align*}
The {\it positive and negative (``open'') halfspaces} supported by the hyperplane $\covectors_e^0$ are
$$\covectors_e^+:= \{ X\in \covectors: X_e=+1\},$$
$$\covectors_e^-:=\{ X\in \covectors: X_e=-1\}.$$
The carrier $N(\covectors_e^0)$ minus $\covectors_e^0$ splits into its positive and negative parts:
$$N^+(\covectors_e^0):=\covectors^+_e\cap N(\covectors_e^0),$$
$$N^-(\covectors_e^0):=\covectors_e^-\cap N(\covectors_e^0).$$
The closure of the disjoint halfspaces $\covectors^+_e$ and $\covectors_e^-$ just adds the corresponding carrier:
$$\overline{\covectors_e^+}:=\covectors_e^+\cup N(\covectors_e^0)=\covectors_e^+\cup\covectors_e^0\cup N^-(\covectors_e^0),$$
$$\overline{\covectors_e^-}:=\covectors_e^-\cup N(\covectors_e^0)=\covectors_e^-\cup\covectors_e^0\cup N^+(\covectors_e^0).$$
The former is called the {\it closed positive halfspace} supported by $\covectors_e^0,$ and the latter is the corresponding
{\it closed negative halfspace}. Both overlap exactly in the carrier.

\begin{figure}[ht]
\includegraphics[width = .5\textwidth]{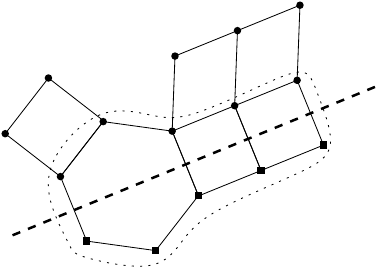}
\caption{A hyperplane (dashed), its associated open halfspaces (square and round vertices, respectively)
and the associated carrier (dotted) in a COM.}
\label{fig:Fig3}
\end{figure}

\begin{proposition} \label{carriers} Let $(E,\covectors)$ be a system of sign vectors. Then all its
hyperplanes, its carriers and their positive and negative parts, its halfspaces and their closures are
strong elimination systems or COMs, respectively, whenever $(E,\covectors)$ is such. If $(E,\covectors)$
is an OM, then so are all its hyperplanes and carriers.
\end{proposition}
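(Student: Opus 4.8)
The plan is to verify, for each of the listed substructures, the defining axioms of a strong elimination system (namely (C) and (SE)) and of a COM (namely (FS) and (SE)), reducing wherever possible to the already-proven closure under minors (Lemma~\ref{lem:minorclosed}) and under fibers (Lemma~\ref{lem:fiber}). The first observation is that a hyperplane $\covectors_e^0$ is nothing but the fiber relative to any $X\in\covectors$ with $X_e=0$ and $A=\{e\}$ -- more precisely, the underlying system $(E\setminus\{e\},\covectors_e^0\setminus\{e\})$ coincides with the contraction $\covectors/\{e\}$, which is a strong elimination system, a COM, or an OM whenever $(E,\covectors)$ is, by Lemma~\ref{lem:minorclosed} (and, for the OM case, since (Sym) is obviously inherited by contractions). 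Strictly speaking the hyperplane as a set of sign vectors on the ground set $E$ carries a coloop at $e$; this is harmless, since a coloop can be deleted without affecting any of the axioms, so I would first reduce the whole statement to ground sets where $e$ is simply suppressed.

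Next I would handle the halfspaces $\covectors_e^+$ and $\covectors_e^-$: these are exactly the topal fibers relative to some tope-extension in coordinate $e$, or more simply, $\covectors_e^+$ is obtained from $\covectors$ by restricting to those covectors positive at $e$. For (C) and (FS) one checks directly that if $X_e=Y_e=+1$ then $(X\circ\pm Y)_e=+1$, so the halfspace is closed under composition and face symmetry; for (SE), given $X,Y\in\covectors_e^+$ and $g\in S(X,Y)$, note $e\notin S(X,Y)$ (as $X_e=Y_e$), so the $Z\in\covectors$ supplied by (SE) in $\covectors$ satisfies $Z_e=(X\circ Y)_e=+1$, hence $Z\in\covectors_e^+$. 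Thus open halfspaces are strong elimination systems, resp. COMs. (They are of course not OMs in general, since $\mathbf 0\notin\covectors_e^+$.)

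For the carrier $N(\covectors_e^0)$ and its positive and negative parts, I would use the face description $N(\covectors_e^0)=\{X\in\covectors: W\le X$ for some $W$ with $W_e=0\}$. Closure under composition: if $W\le X$ and $W'\le X'$ with $W_e=W'_e=0$, I need a covector $\le X\circ X'$ that vanishes at $e$; the natural candidate is obtained by applying (SE) repeatedly to eliminate, from $W\circ W'$ (which need not vanish at $e$), while staying below $X\circ X'$ -- here the key point is that $W$ and $W'$ both vanish at $e$, so $e\notin S(W,W')$, and one eliminates the separator coordinates of $W,W'$ one at a time via (SE1) (Lemma~\ref{lem:auxilliary}), each step producing a covector still $\le X\circ X'$ and still $0$ at $e$; the conformal composition of the results then lies below $X\circ X'$, vanishes at $e$, and witnesses $X\circ X'\in N(\covectors_e^0)$. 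Face symmetry is obtained the same way applied to $X\circ -X'$, after first replacing $W'$ by a witness below $-X'$, which exists because $-W'\le -X'$ and $(-W')_e=0$. The separator-splitting property of faces recorded just before Proposition~\ref{carriers} (namely $F(V)\cap F(W)=F(V\circ W)$ when $V,W$ are sign-consistent, and disjointness otherwise) gives (SE) for the carrier: given $X,X'\in N(\covectors_e^0)$ and $g\in S(X,X')$, the $Z$ from (SE) in $\covectors$ lies in a face whose apex is $\le X\circ X'$, and the same elimination argument produces a witness $\le Z$ vanishing at $e$. Intersecting with $\covectors_e^+$ resp. $\covectors_e^-$ gives $N^+$ and $N^-$, and intersecting carriers with halfspaces gives the closed halfspaces $\overline{\covectors_e^+}=\covectors_e^+\cup N(\covectors_e^0)$; for these I would argue that an intersection, resp. a union along the common carrier, of two strong elimination systems / COMs sharing this nested structure is again one, checking the mixed cases (one covector in the open halfspace, one in the carrier) directly using that the carrier is a downset-union of faces. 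Finally, for the OM claim: if $(E,\covectors)$ satisfies (Sym), then $N(\covectors_e^0)$ is manifestly closed under $X\mapsto -X$ (the witness $W$ goes to $-W$, still $0$ at $e$) and contains $\mathbf 0$, so it is an OM; same for the hyperplane.

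The step I expect to be the main obstacle is closure of the carrier (and its positive/negative parts) under (SE) and, to a lesser extent, under (FS): unlike hyperplanes and open halfspaces, the carrier is not a fiber, a minor, or a ``coordinate slice,'' so none of the earlier lemmas applies off the shelf, and one genuinely has to run the strong-elimination machinery of Section~\ref{sec:mingen} -- eliminating separator coordinates one at a time while maintaining the invariant ``still below $X\circ X'$ and still zero at $e$'' -- to manufacture the required witness. Keeping careful track of which coordinates the intermediate covectors are allowed to be nonzero at (so that the final conformal composition still lies below $X\circ X'$) is the delicate bookkeeping here; everything else is a routine axiom check.
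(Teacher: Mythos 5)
Your treatment of hyperplanes and open halfspaces is correct and essentially the paper's: both are fibers, and the direct check (for $X,Y\in\covectors_e^+$ one has $e\notin S(X,Y)$, so the $Z$ from (SE) in $\covectors$ keeps $Z_e=+1$) is exactly what is needed. But the heart of the proposition is (SE) for the carrier, for $N^{\pm}(\covectors_e^0)$, and for the closed halfspaces, and there your argument has a genuine gap. First, a misdirection: (C) and (FS) for the carrier are trivial -- if $W\le X$ with $W_e=0$, then $W\le X\le X\circ(\pm Y)$, so the same witness $W$ works -- whereas the elimination procedure you propose for this step is both unnecessary and unsound as stated: starting from $W\circ W'$, the intermediate covectors need not stay below $X\circ X'$ (where $W$ vanishes, $W'$ may carry the sign of $X'$ opposite to $X$), so the invariant ``still $\le X\circ X'$'' you rely on can fail.

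The real problem is your (SE) step for the carrier. You take the $Z$ supplied by (SE) in $\covectors$ for $X,X'$ at $g$ and assert that ``the same elimination argument produces a witness $\le Z$ vanishing at $e$,'' i.e.\ that $Z$ itself lies in the carrier. Nothing guarantees this for an arbitrary valid $Z$, and you give no construction; the flawed invariant above is all you point to. The paper instead \emph{modifies} $Z$: it chooses $Z'$ in the hyperplane with $Z'_{g}=0$ by a case analysis on the witnesses $X'\le X$, $Y'\le Y$ (apply (SE) inside $\covectors_e^0$ to $X',Y'$ if $g\in S(X',Y')$; take $Z'=Y'$ if $g\in\underline{X'}\setminus\underline{Y'}$; take $Z'=X'$ if $g\notin\underline{X'}$), and then shows that $Z'\circ Z$ is in the carrier (as $Z'\le Z'\circ Z$ and $Z'_e=0$), vanishes at $g$, and agrees with $X\circ Y$ off $S(X,Y)$ -- the last point being a commutativity computation you never address, since composing with anything risks spoiling the values off the separator. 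The same composition trick, with further case analysis, is what the paper uses for $\overline{\covectors_e^+}$ and for $N^{\pm}$; your replacements do not work as stated: ``intersecting with $\covectors_e^{\pm}$ gives $N^{\pm}$'' presumes (SE) is preserved by intersections (it preserves only (C) and (FS) in general), and ``a union along the common carrier of two strong elimination systems is again one'' is false in general -- that is precisely why the COM amalgamation of Section~\ref{sec:amalgam} needs its extra hypotheses. The OM statement at the end is fine. So the proposal correctly disposes of the easy substructures but does not prove the proposition for carriers, their parts, or closed halfspaces, which is where the paper's actual work lies.
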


\begin{proof} We already know that fibers preserve (C),(FS), and (SE). Moreover, intersections preserve the two properties (C) and (FS). Since $X'\le X$ and
$Y'\le Y$ imply both $X'\le X\circ Y, X'\le X\circ (-Y)$ and $Y'\le Y\circ X, Y'\le Y\circ (-X)$, we infer that (C) and (FS) carry over from
$\covectors$ to $N({\covectors}^0_e)$.

In what follows let $(E,\covectors)$ be a strong elimination system. We need to show that $N({\covectors}^0_e)$ satisfies (SE).
Let $X',Y'\in {\covectors}^0_e$ and $X,Y\in \covectors$ such that
$X'\le X$ and $Y'\le Y$. Then $S(X',Y')\subseteq S(X,Y)$. Apply (SE) to the pair $X,Y$ in $\covectors$ relative to some element $e'$ separating $X$ and $Y$.
This yields some $Z\in {\mathcal W}(X,Y)$ with $Z_{e'}=0$ and $Z_f=(X\circ Y)_f$ for all $f\in E\setminus S(X,Y).$ If $e'\in S(X',Y')$
as well, then apply (SE) to  $X',Y'$ in ${\covectors}_e^0$ giving $Z'\in {\mathcal W}(X',Y')\cap {\covectors}_e^0$ with $Z'_{e'}=0$
and $Z'_f=(X'\circ Y')_f$ for all $f\in E\setminus S(X,Y)\subseteq E\setminus S(X',Y').$ If $e'\in \underline{X}'\setminus \underline{Y}'$,
then put $Z':=Y'$. Else, if $e'\in E\setminus \underline{X}'$, put $Z':=X'$. Observe that all cases are covered as
$S(X',Y')=S(X,Y)\cap \underline{X}'\cap \underline{Y}'.$ We claim that in any case $Z'\circ Z$ is the required sign vector
fulfilling (SE) for $X,Y$ relative to $e'$. Indeed, $Z'\circ Z$ belongs to $N({\covectors}^0_e)$ since
$Z'\in {\covectors}^0_e$ and $Z\in \covectors$. Then $Z'\circ Z\in {\mathcal W}(X,Y)$ because ${\mathcal W}(X',Y')\subseteq {\mathcal W}(X,Y)$
and ${\mathcal W}(X,Y)$ is closed under composition. Let $f\in E\setminus S(X,Y)$. Then $X'_f,X_f,Y'_f,Y_f$ all commute.

In particular,
$$(Z'\circ Z)_f=Z'_f\circ Z_f=X'_f\circ Y'_f\circ X_f\circ Y_f=X'_f\circ X_f\circ Y'_f\circ Y_f=(X\circ Y)_f$$
whenever both $Y'_{e'}=Y_{e'}$ and $X'_{e'}=X_{e'}$ hold. If however $Y'_e=0,$ then $(Y'\circ Z)_f=Y'_f\circ X_f\circ Y_f=X_f\circ Y'_{f}\circ Y_f=(X\circ Y)_f$.
Else, if $X'_e=0,$ then $(X'\circ Z)_f=X'_f\circ X_f\circ Y_f=(X\circ Y)_f.$ This finally shows that the carrier of ${\covectors}^0_e$ satisfies (SE).

To prove that $\overline{\covectors_e^+}$ satisfies (SE) for a pair $X,Y\in \overline{\covectors_e^+}$ relative to some $e'\in S(X,Y),$
assume that $X\in \covectors^+_e$ and $Y\in N(\covectors^0_e)\setminus \covectors^+_e$
since (SE) has already been established for both $\covectors^+_e$ and $N(\covectors^0_e)$ and the required sign vector would equally
serve the pair $Y,X$. Now pick any $Y'\in \covectors^0_e$ with $Y'\le Y$. Then two cases can occur for $e'\in S(X,Y)$.

\medskip
{\bf Case 1.} $Y'_{e'}=0.$

\medskip
Then $(Y'\circ X)_{e'}=X_{e'}, S(Y'\circ X,Y)\subseteq S(X,Y)$, and $Y'\le Y'\circ X$, whence $Y'\circ X\in N(\covectors^0_e)$.
Applying (SE) to $Y'\circ X, Y$ in $N(\covectors^0_e)$ relative to $e'$ yields $Z'\in {\mathcal W}(Y'\circ X,Y)\subseteq {\mathcal W}(X,Y)$ with $Z'_{e'}=0$ and
$$Z'_f=Y'_f\circ X_f\circ Y_f=X_f\circ Y'_f\circ Y_f=(X\circ Y)_f \mbox{ for all } f\in E\setminus S(X,Y).$$

\medskip
{\bf Case 2.} $Y'_{e'}=Y_{e'}.$

\medskip
As above we can select $Z\in {\mathcal W}(X,Y)$ with $Z_{e'}=0$ and $Z_f=(X\circ Y)_f$ for all $f\in E\setminus S(X,Y)$.
Analogously choose $Z'\in {\mathcal W}(X,Y')$ with $Z'_{e'}=0$ and $Z'_f=(X\circ Y')_f$ for all $f\in E\setminus S(X,Y').$
We claim that in this case $Z'\circ Z$ is a sign vector from $\covectors^+_e$ as required for $X,Y$ relative to $e'$.
Indeed, $Z'_e=(X\circ Y')_e=+1=(X\circ Y)_e$ because $X_e=+1,Y'_e=0$ and consequently $e\notin S(X,Y').$ For $f\in E\setminus S(X,Y)$
we have
$$(Z'\circ Z)_f=X_f\circ Y'_f\circ X_f\circ Y_f=X_f\circ X_f\circ Y'_f\circ Y_f=(X\circ Y)_f$$
by commutativity, similarly as above. This proves that  $\overline{\covectors_e^+}$ satisfies (SE).

To show that $N^+(\covectors^0_e)=\covectors^+_e\cap N(\covectors^0_e)$ satisfies (SE), we can apply (SE) to some pair
$X,Y\in N^+(\covectors^0_e)$ relative to some $e'\in S(X,Y)$ first within $N(\covectors^0_e)$ and then within $\covectors^+_e$
to obtain two sign vectors $Z'\in N(\covectors^0_e)\cap {\mathcal W}(X,Y)$ and $Z\in {\covectors}^+_e\cap {\mathcal W}(X,Y)$
such that $Z'_{e'}=0=Z_{e'}$ and $Z'_{f}=(X\circ Y)_f=Z_f$ for all $f\in E\setminus S(X,Y).$ Then $Z'\le Z'\circ Z\in N(\covectors^0_e)$
and $(Z'\circ Z)_e=(X\circ Y)_e=+1$ as $e\notin S(X,Y).$ Moreover, $(Z'\circ Z)_f=(X\circ Y)_f$ for all $f\in E\setminus S(X,Y).$
This establishes (SE) for $N^+(\covectors^0_e).$ The proofs for $\overline{\covectors_e^-}$ and $N^-(\covectors^0_e)$ are
completely analogous. The last statement of the proposition is then trivially true because the zero vector, once present in
$\covectors$, is also contained in all hyperplanes (and hence the carriers).
\end{proof}

A particular class of COMs obtained by the above proposition are halfspaces of OMs. These are usually called \emph{affine oriented matroids},
see~\cite{Karlander-92} and~\cite[p. 154]{bjvestwhzi-93}. Karlander~\cite{Karlander-92} has shown how an OM can be reconstructed
from any of its halfspaces. The proof of his intriguing axiomatization of affine oriented matroids, however, has a gap, which
has been filled only recently~\cite{Zhu-15}. Only few results exist about the complex given by an affine oriented
matroid~\cite{Dong-06,Dong-08}.

We continue with the following recursive characterization of COMs:

\begin{theorem}\label{hyperplane}  Let $(E,\covectors)$ be a semisimple system of sign vectors. Then
 $(E,\covectors)$ is a strong elimination system if and only if the following four requirements are met:
\begin{itemize}
\item[(1)] the composition rule (C) holds in $(E,\covectors)$,
\item[(2)] all hyperplanes of $(E,\covectors)$ are strong elimination systems,
\item[(3)] the tope graph of $(E,\covectors)$ is a partial cube,
\item[(4)] for each pair $X,Y$ of adjacent topes (i.e., with $\#S(X,Y)=1$) the barycenter of the corresponding edge,
i.e. the sign vector $\frac{1}{2}(X+Y),$ belongs to $\covectors$.
\end{itemize}

Moreover, $(E,\covectors)$ is a COM if and only if it satisfies (1),(3),(4), and
\begin{itemize}
\item[(2$'$)] all hyperplanes of $(E,\covectors)$ are COMs,
\end{itemize}

In particular, $(E,\covectors)$ is an OM if and only if it satisfies (1),(4), and
\begin{itemize}
\item[(2$''$)] all hyperplanes of $(E,\covectors)$ are OMs,
\item[(3$'$)] the tope set of $(E,\covectors)$ is a \emph{simple acycloid, see~\cite{ha-93}}, i.e., induces a partial cube and satisfies \normalfont{(Sym)}.
\end{itemize}
\end{theorem}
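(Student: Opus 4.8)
The plan is to prove the three equivalences (strong elimination system, COM, OM) by reducing each to the previous one, so that the COM case carries most of the real work and the OM case is a short corollary. For the strong elimination characterization, the forward direction is essentially Propositions~\ref{OMCtoTopeGraph} and~\ref{carriers}: if $(E,\covectors)$ is a strong elimination system then (C) holds by definition (or is derivable), all hyperplanes $\covectors_e^0$ are fibers and hence strong elimination systems by Lemma~\ref{lem:fiber} or Proposition~\ref{carriers}, the tope graph is a partial cube by Proposition~\ref{OMCtoTopeGraph}, and (4) is just the content of Proposition~\ref{OMCtoTopeGraph} restricted to edges, namely that adjacent topes have a common lower cover with singleton zero set. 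The reverse direction is the heart of the matter: assuming (1)--(4), I would verify (SE) by induction on $\#E$. Given $X,Y\in\covectors$ and $e\in S(X,Y)$, the goal is to produce $Z\in\covectors$ with $Z_e=0$ and $Z_f=(X\circ Y)_f$ off the separator. If $e$ lies in the support of neither some covector forcing a reduction nor is ``trivial'', one restricts attention to an appropriate minor; the key move is to pass to a hyperplane $\covectors_g^0$ for a suitable $g$, or to a deletion $\covectors\setminus g$, apply the inductive hypothesis there (noting hyperplanes of minors are minors of hyperplanes, and (1),(2) pass to minors since (C) is minor-closed by Lemma~\ref{lem:minorclosed} and hyperplanes of a deletion are deletions of hyperplanes), and then lift the resulting sign vector back using composition (C) and condition (4) to repair the coordinates where the lift and the target disagree.

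For the base cases of the induction one handles $\#S(X,Y)=1$: then $X,Y$ compose to sign vectors whose faces $F(X\circ Y)$ and $F(Y\circ X)$ together give, after applying (C), a situation where the required $Z$ is supplied either by condition (4) (when $X\circ Y$ and $Y\circ X$ are adjacent topes, using semisimplicity to pull the edge barycenter down onto the right face) or recursively inside the face. The passage between ``topes'' and general covectors is mediated by the face structure: for arbitrary $X,Y$ one first composes to get $X\circ Y, Y\circ X$ which have equal support $\underline X\cup\underline Y$ and the same separator, so by the remark preceding (SE$^=$) it suffices to treat the equal-support case; then within a face $F(V)$ with $V=$ the meet, the relevant minor $(E\setminus\underline V, F(V)\setminus\underline V)$ inherits (1)--(4) and has fewer ``active'' coordinates, or else one genuinely deletes a coordinate. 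The main obstacle I expect is bookkeeping the interaction of (2) with (3)--(4) across the induction: one must be careful that restricting to a hyperplane preserves property (3) for the right derived system and that condition (4), a statement about edges of the full tope graph, still applies after one has descended into a minor --- this is exactly where semisimplicity and Proposition~\ref{OMCtoTopeGraph} (edges $\leftrightarrow$ singleton-zero-set covectors) are needed to relabel things consistently.

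The COM version is obtained by running the same induction but replacing (SE) alone with the pair (FS),(SE); concretely, one proves (C) (given as (1)), (SE) as above using (2$'$) in place of (2), and (FS$^{\le}$) by the same descent: for $X\le Y$ in $\covectors$, pass to a hyperplane or deletion where $Y$'s zero set is larger, apply the inductive hypothesis to get $X\circ -(Y')$ for appropriate $Y'$, and recombine via (C) and Theorem~\ref{thm:cc_com}'s identity $W\circ-(X\circ Z)=(W\circ -X)\circ(W\circ -Z)$ --- this mirrors the argument already given in the proof of Theorem~\ref{thm:cc_com}, so it may in fact be cleaner to invoke Corollary~\ref{cor:covectors} and check (CC),(SE1),(FS$^{\prec}$) directly from (1),(2$'$),(3),(4). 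Here the forward direction again uses Proposition~\ref{carriers} for (2$'$) and Propositions~\ref{OMCtoTopeGraph},~\ref{topes} for (3),(4). Finally the OM characterization is immediate: an OM is a COM satisfying (Sym), and (Sym) for the whole system is equivalent to (Sym) on the tope set (using that a semisimple COM is determined by its topes, Proposition~\ref{topes}, and that sign reversal is an automorphism), which is exactly the extra clause (3$'$); and a COM all of whose hyperplanes are OMs, together with (Sym), forces the zero vector to appear (take any $X$ and any hyperplane through it, symmetrize, compose), giving (2$''$) $\Leftrightarrow$ the hyperplane COMs being OMs. So the OM part reduces, with no new induction, to the COM part plus a one-line symmetry argument; the genuine difficulty is concentrated entirely in the inductive proof of (SE)/(FS) for the reverse direction of the COM (equivalently strong-elimination) statement.
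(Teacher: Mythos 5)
Your necessity direction and your treatment of the OM clause are fine (the symmetry argument via Proposition~\ref{topes} is a legitimate alternative to the paper's remark that symmetry for non-topes follows from symmetry of the hyperplanes). The problem is the sufficiency direction of the first two assertions, which is precisely where you leave a genuine gap: you propose an induction on $\#E$ that descends into minors (deletions, or face-minors $(E\setminus\underline V, F(V)\setminus\underline V)$) and then ``lifts the resulting sign vector back using (C) and (4) to repair coordinates'', and you yourself flag as an unresolved ``obstacle'' whether (2),(3),(4) survive this descent. That obstacle is real and is not bookkeeping: hypothesis (2) speaks only about hyperplanes of the \emph{original} system, and asserting that a face-minor ``inherits (1)--(4)'' (in particular that \emph{its} hyperplanes are strong elimination systems and that its tope graph is a partial cube with barycenters present) is essentially equivalent to what the theorem is asking you to prove; the lift-and-repair step is never specified and cannot be checked. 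So as written the central implication is a plan, not a proof.

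What you are missing is that no induction and no minors are needed, because a hyperplane $\covectors_g^0$ is a subsystem of $\covectors$ \emph{on the same ground set}: any covector it supplies already lies in $\covectors$, so nothing has to be lifted. The paper's argument is: by (1) and Lemma~\ref{lem:auxilliary} it suffices to verify (SE$^=$), i.e.\ (SE) for $X,Y$ with $\underline X=\underline Y$. If $\underline X\neq E$, pick $g\in X^0=Y^0$; then $X,Y\in\covectors_g^0$, which satisfies (SE) by hypothesis (2), and the $Z$ it provides is the required covector of $\covectors$. If $\underline X=E$, then $X,Y$ are topes; by (3) a shortest $X$--$Y$ path in the tope graph uses exactly the coordinates of $S(X,Y)$, hence contains an edge switching $e$, and by (4) the barycenter of that edge is the required $Z$. (Your sketch only treats $\#S(X,Y)=1$ and defers larger separators to the induction, so you never actually use (3) where it is needed.) For the COM assertion one must still establish (FS$^\le$): if $\underline Y\neq E$ one descends to a hyperplane via (2$'$) as above, and if $Y$ is a tope the paper gives a concrete construction you do not supply -- apply Proposition~\ref{OMCtoTopeGraph} to the fiber $F(X)$ to find a tope $U\in F(X)$ adjacent to $Y$ with $S(U,Y)=\{e\}$, let $W$ be the barycenter of that edge, apply (FS$^\le$) to $X,W$ inside the COM $\covectors_e^0$, and then $X\circ(-W)\circ U=X\circ(-Y)\in\covectors$ by (1). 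Your alternative suggestion of checking (CC), (SE1), (FS$^{\prec}$) via Corollary~\ref{cor:covectors} is likewise only named, not carried out, and verifying (FS$^{\prec}$) directly from (1),(2$'$),(3),(4) is not obviously easier than the displayed argument.
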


\begin{proof} The ``if'' directions of all three assertions directly follow from Propositions~\ref{carriers} and~\ref{OMCtoTopeGraph}.
Conversely, using (1) and Lemma~\ref{lem:auxilliary}, we only need to verify (SE$^{=}$) to prove the first assertion.

To establish (SE$^{=}$), let $X$ and $Y$ be any different sign vectors from $\covectors$. Assume that $\underline{X}=\underline{Y}$ and
$e\in S(X,Y)$. If the supports are not all of $E$, then we can apply (SE$^{=}$) to the hyperplane associated with a zero coordinate
of $X$ and $Y$ according to condition (2) and obtain a sign vector $Z$ as required. Otherwise, both $X$ and $Y$ are topes.
Then a shortest path joining $X$ and $Y$ in the tope graph is indexed by the elements of $S(X,Y)$ and thus includes an edge
associated with $e$. Then the corresponding barycenter map $Z$ (that belongs to $\covectors$ by condition (4)) of this edge
does the job. Thus $(E,\covectors)$ is a semisimple strong elimination system.

In order to complete the proof of the second assertion it remains to establish (FS$^{\le}$). So let $X$ and $Y$ be any
different sign vectors from $\covectors$ with $X\circ Y=Y$. In particular, $X$ is not a tope and $Y$ belongs to the face $F(X)$.
If the support $\underline{Y}$ does not equal $E$, then again we find a common zero coordinate of $X$ and $Y$,
so that we can apply (FS$^{\le}$) in the corresponding hyperplane to yield the sign vector opposite to $Y$ relative to $X$.
So we may assume that $Y$ is a tope. Since $(E,\covectors)$ is a semisimple strong elimination system,
from Proposition~\ref{OMCtoTopeGraph} we infer that the tope graph of $F(X)$ is a partial cube containing at least two topes.
Thus there exists a tope $U\in F(X)$ adjacent to $Y$ in the tope graph,  say $S(U,Y)=\{ e\}$. Let $W$ be the
barycenter map of this edge. Applying (FS$^{\le}$) for the pair $X,W$ in the hyperplane $\covectors_e^0$ relative to $e$
we obtain $X\circ(-W)\in\covectors_e^0$. By (1) we have $X\circ(-W)\circ U\in\covectors$. Since $X\circ(-W)\circ U=X\circ(-Y)$
this concludes the proof.

As for the third assertion, note that symmetric COMs are OMs and symmetry for non-topes is implied by symmetry for hyperplanes.
\end{proof}

\section{Decomposition and amalgamation}\label{sec:amalgam}

Proposition~\ref{carriers} provides the necessary ingredients for a decomposition of a COM, which is not an OM,
into smaller COM constituents. Assume that $(E,\covectors)$ is a semisimple COM that is not an OM.
Put $\covectors':=\covectors^-_e$ and $\covectors'':=\overline{\covectors^+_e}$. Then
$\covectors=\covectors'\cup \covectors''$ and $\covectors'\cap \covectors''=N^-(\covectors^0_e)$.
Since $X$ determines a maximal face not included in $\covectors^0_e$,
we infer that $\covectors'\setminus \covectors''\ne \varnothing$ and trivially
$\covectors''\setminus \covectors'\ne \varnothing$. By Proposition~\ref{carriers},
all three systems $(E,\covectors'),$ $(E,\covectors'')$, and $(E,\covectors'\cap \covectors'')$
are COMs, which are easily seen to be semisimple.

Moreover, $\covectors'\circ \covectors''\subseteq \covectors'$ holds trivially. If $W\in \covectors^0_e$
and $X\in \covectors^-_e,$ then $W\circ X\in F(W)\subseteq N(\covectors^0_e)$, whence
$\covectors''\circ \covectors'\subseteq \covectors''$. This motivates the following amalgamation process
which in a way reverses this decomposition procedure.

We say that a system $(E,\covectors)$ of sign vectors is a {\it COM amalgam} of two semisimple COMs $(E,\covectors')$
and $(E,\covectors'')$ if the following conditions are satisfied:

\begin{itemize}
\item[(1)] $\covectors=\covectors'\cup \covectors''$ with $\covectors'\setminus \covectors'', \covectors''\setminus \covectors', \covectors'\cap \covectors''\ne \varnothing$;

\item[(2)] $(E,\covectors'\cap \covectors'')$ is a semisimple COM;

\item[(3)] $\covectors'\circ \covectors''\subseteq \covectors'$ and $\covectors''\circ \covectors'\subseteq \covectors''$;

\item[(4)] for $X\in \covectors'\setminus \covectors''$ and $Y\in \covectors''\setminus \covectors'$ with $X^0=Y^0$ there
exists a shortest path in the graphical hypercube on $\{ \pm 1\}^{E\setminus X^0}$ for which all its vertices and barycenters
of its edges belong to $\covectors\setminus X^0$.
\end{itemize}

\begin{proposition} \label{amalgam} The COM amalgam of semisimple COMs $(E,\covectors')$ and $(E,\covectors'')$
constitutes a semisimple COM $(E,\covectors)$ for which every maximal face is a maximal face of at least one
of the two constituents.
\end{proposition}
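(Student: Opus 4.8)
The goal is to show that the COM amalgam $(E,\covectors)$ of two semisimple COMs $(E,\covectors')$ and $(E,\covectors'')$ satisfying conditions (1)--(4) is itself a semisimple COM, and moreover that each of its maximal faces already lives in $\covectors'$ or in $\covectors''$. My plan is to verify the three axioms (C), (FS), (SE) directly, using the decomposition $\covectors=\covectors'\cup\covectors''$ together with the closure property (3), the intersection being a semisimple COM (2), and the path condition (4). First I would treat composition: given $X,Y\in\covectors$, at least one of the four cases $X,Y\in\covectors'$; $X,Y\in\covectors''$; $X\in\covectors'\setminus\covectors''$, $Y\in\covectors''$; $X\in\covectors''\setminus\covectors'$, $Y\in\covectors'$ applies; in the first two cases (C) is inherited, and in the mixed cases property (3) immediately gives $X\circ Y\in\covectors'$ (resp. $\covectors''$). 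The same case analysis, applied to $X\circ -Y$, proves (FS), since $-Y$ lies in the same constituent as $Y$ by semisimplicity/closure under reorientation is \emph{not} available --- instead I would note that (3) is phrased for $\circ$ alone, so for (FS) I must be slightly more careful: if $X\in\covectors'$ then $X\circ -Y\in\covectors'$ provided $-Y\in\covectors''$ or $\covectors'$, and this holds because each constituent is a COM, hence closed under the relevant face symmetry internally, and the cross term is handled exactly as in the decomposition discussion where $\covectors'\circ\covectors''\subseteq\covectors'$ was derived geometrically.

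The substantive work is (SE), and here I would mirror the hyperplane argument used in the proof of Proposition~\ref{carriers} and Theorem~\ref{hyperplane}. Let $X,Y\in\covectors$ and $e\in S(X,Y)$. If both $X,Y\in\covectors'$ or both lie in $\covectors''$, apply (SE) inside that constituent --- but one must check the resulting $Z$ still lies in $\covectors$, which it does trivially since $\covectors'\subseteq\covectors$. The genuinely new case is $X\in\covectors'\setminus\covectors''$ and $Y\in\covectors''\setminus\covectors'$ (after possibly swapping; note $Z$ serves both ordered pairs). Here I would first reduce to the case $\underline X=\underline Y$ using Lemma~\ref{lem:auxilliary} (we already have (C)), i.e. replace $X,Y$ by $X\circ Y$ and $Y\circ X$; by (3), $X\circ Y\in\covectors'$ and $Y\circ X\in\covectors''$, so the reduction is compatible with the bipartition. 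Now with common support, if $X^0=Y^0\neq\emptyset$ I would try to descend into a hyperplane $\covectors_g^0$ for $g\in X^0$, noting $\covectors_g^0=(\covectors')_g^0\cup(\covectors'')_g^0$ is again an amalgam of lower ``complexity'' (smaller ground set after deletion), so induction applies --- this is the recursive heart of the argument. If instead $X^0=Y^0=\emptyset$, i.e. $X,Y$ are topes, this is precisely where condition (4) is designed to intervene: it supplies a shortest path in the hypercube on $\{\pm1\}^{E}$ between $X$ and $Y$ all of whose vertices and edge-barycenters lie in $\covectors$, and the edge-barycenter corresponding to the coordinate $e$ is exactly the covector $Z$ demanded by (SE$^=$).

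The expected main obstacle is making the induction on (SE) precise when $X$ and $Y$ have nonempty but unequal zero sets, or when $X^0=Y^0$ but this common zero set is not "large" enough to land in a single constituent's hyperplane --- one must ensure that the hyperplane $\covectors_g^0$ of the amalgam really is the amalgam (or at least a COM by Proposition~\ref{carriers}) of the two hyperplanes $(\covectors')_g^0,(\covectors'')_g^0$, so that the inductive hypothesis on $\#E$ is available; degenerate situations where $(\covectors')_g^0$ or $(\covectors'')_g^0$ fails some clause of (1)--(4) (e.g. the intersection becomes empty, or one piece is contained in the other) need separate, easy handling. Finally, for the closing claim about maximal faces: a maximal face $F(Z)$ of $\covectors$ is an OM by Lemma~\ref{lem:fiber}, hence symmetric; if its improper cocircuit $Z$ lies in $\covectors'\cap\covectors''$ then by (3) the whole face $F(Z)=\{Z\circ W: W\in\covectors\}$ satisfies $Z\circ\covectors'\subseteq\covectors'$ and $Z\circ\covectors''\subseteq\covectors''$, so $F(Z)\subseteq\covectors'\cap\covectors''$ --- but a maximal face of a COM properly containing a maximal face of its subCOM $\covectors'\cap\covectors''$ forces equality, contradicting maximality in $\covectors$ unless it already was maximal in a constituent; if instead $Z\in\covectors'\setminus\covectors''$ then $F(Z)\subseteq\covectors'$ by (3), and maximality in $\covectors$ forces maximality in $\covectors'$. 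Semisimplicity of $\covectors$ I would verify last, directly from the fact that $\covectors'$ and $\covectors''$ are semisimple and cover $\covectors$: conditions (RN1$^*$) and (RN2$^*$) are inherited coordinate- and pair-wise from either constituent.
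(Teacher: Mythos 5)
Your outline reproduces the paper's skeleton for (C), for semisimplicity, and for the maximal-face claim (your case distinction on the improper cocircuit $Z$ is an unnecessary detour, since $Z\in \covectors'$ already gives $F(Z)\subseteq \covectors'\circ\covectors\subseteq\covectors'$ by (3)), but two steps do not go through as written. First, face symmetry: in the mixed case $X\in\covectors'$, $Y\in\covectors''$ you justify $X\circ -Y\in\covectors'$ by saying that $-Y$ lies in one of the constituents or by appealing to the geometric derivation of $\covectors'\circ\covectors''\subseteq\covectors'$. Neither works: COMs are not closed under sign reversal, so $-Y$ need not lie anywhere useful, and (3) is an axiom about compositions of members of $\covectors'$ and $\covectors''$, not of $-Y$. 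The short correct move, which is the paper's, is to verify only (FS$^{\le}$), equivalent to (FS) once (C) is established: replace $Y$ by $X\circ Y$; in the mixed case (3) gives $X\circ Y\in\covectors'$, so the pair $X\le X\circ Y$ lies entirely in $\covectors'$, and internal face symmetry of $\covectors'$ yields $X\circ -(X\circ Y)=X\circ -Y\in\covectors'\subseteq\covectors$.

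Second, and more substantially, strong elimination. You correctly reduce to (SE$^{=}$) via Lemma~\ref{lem:auxilliary}, but then you invoke condition (4) only when $X,Y$ are topes and propose an induction through hyperplanes for pairs with $X^0=Y^0\neq\emptyset$. Condition (4) is stated for \emph{every} pair $X\in\covectors'\setminus\covectors''$, $Y\in\covectors''\setminus\covectors'$ with $X^0=Y^0$; the path lives in the hypercube on $\{\pm 1\}^{E\setminus X^0}$, so after the reduction it applies verbatim, and the barycenter of an $e$-edge of that shortest path is the required $Z$ (zero at $e$ and on $X^0$, and $Z_f=(X\circ Y)_f$ for $f\notin S(X,Y)$ because the path is shortest). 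That is the paper's entire argument; no induction is needed. Your inductive route is, as it stands, unproven: to apply the induction hypothesis you would have to show that $\covectors_g^0$ is again a COM amalgam of $(\covectors')_g^0$ and $(\covectors'')_g^0$, i.e.\ re-verify (1)--(4) for these pieces. Nonemptiness of the intersection in (1), semisimplicity of the intersection in (2), and especially condition (4) for the hyperplane pieces do not follow formally from the hypotheses on $\covectors'$ and $\covectors''$, and the ``degenerate situations'' you defer are precisely where the work lies (for instance, if $(\covectors')_g^0\cap(\covectors'')_g^0=\emptyset$ there is no amalgam structure to recurse on, yet (SE) must still be proved for such a pair). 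So the core case of strong elimination is not established in your proposal.
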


\begin{proof} $\covectors=\covectors'\cup \covectors''$ satisfies (C) because $\covectors'$ and $\covectors''$ do and for
$X\in \covectors'$ and $Y\in \covectors''$ one obtains $X\circ Y\in \covectors'\subseteq \covectors$ and $Y\circ X\in \covectors''\subseteq \covectors$ by (3).
Then $\covectors$ also satisfies (FS$^{\le}$) since for $X\le Y=X\circ Y$ in $\covectors$ the only nontrivial case is that $X\in \covectors'$
and $Y\in \covectors''$, say. Then $Y=X\circ Y\in \covectors'$ by (3), whence $X\circ -Y\in \covectors'\subseteq \covectors$.

Every minimal sign vector $X\in \covectors$, say $X\in \covectors',$ yields the face $F(X)=\{ X\circ Y: Y\in \covectors\}\subseteq \covectors'\circ \covectors\subseteq \covectors'$.
It is evident that $(E,\covectors)$ is semisimple.

By Lemma~\ref{lem:auxilliary}, it remains to show (SE$^{=}$) for two sign vectors $X$ and $Y$ of $\covectors$ with $X^0=Y^0$,
where $X\in \covectors'\setminus\covectors''$ and $Y\in \covectors''\setminus \covectors'$. Then let $e\in S(X,Y)$ and
$f\in E\setminus S(X,Y)$. Then the barycenter of an $e$-edge on a shortest path $P$ from $X\setminus X^0$ to $Y\setminus X^0$
between $\covectors'\setminus X^0$ and $\covectors''\setminus X^0$ (guaranteed by condition $(4)$) yields the desired sign
vector $Z\in \covectors$ with $Z_e=0$, $X^0\subseteq Z^{0},$ and $Z\in {\mathcal W}(X,Y)$. Since $X^0=Y^0$, we have $X_f=Y_f$ by
the choice of $f$. Since $P$ is shortest, we get $Z_f=(X\circ Y)_f$.
\end{proof}

Summarizing the previous discussion and results, we obtain
\begin{corollary} \label{amalgam-decomposition} Semisimple COMs are obtained via successive COM amalgamations from their
maximal faces (that can be contracted to OMs). 
\end{corollary}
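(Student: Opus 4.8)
The goal is Corollary~\ref{amalgam-decomposition}: every semisimple COM is built from its maximal faces by iterated COM amalgamations, and each maximal face, after contracting its support, becomes an OM. The second half is immediate: by Lemma~\ref{lem:fiber}, for any $X\in\covectors$ the minor $(E\setminus\underline{X},F(X)\setminus\underline{X})$ is an OM, simple when $(E,\covectors)$ is semisimple. So the real content is the decomposition, and the natural approach is induction on the number of improper cocircuits (equivalently, on the number of maximal faces), using Proposition~\ref{carriers} and Proposition~\ref{amalgam} as the two halves of a single inductive step.

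First I would set up the base case: if $(E,\covectors)$ has a single improper cocircuit, then in fact $(E,\covectors)$ is an OM. Indeed, a single minimal sign vector $W$ must be sign-consistent with every covector (otherwise $F(W)$ would be disjoint from some other face, forcing a second minimal element below that face by (C)), and by (FS) plus (SE) one checks $W\circ -W=\mathbf 0\in\covectors$, so $W=\mathbf 0$ by minimality; hence (Z) holds and $(E,\covectors)$ is an OM, which is its own unique maximal face. For the inductive step, suppose $(E,\covectors)$ is a semisimple COM with at least two improper cocircuits. The paragraph preceding Proposition~\ref{amalgam} already does the work: pick improper cocircuits $X,Y$ and a coordinate $e$ with $e\in\underline{X}\cap Y^0$, set $\covectors':=\covectors^-_e$ (assuming $X_e=-1$) and $\covectors'':=\overline{\covectors^+_e}$. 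Then $\covectors=\covectors'\cup\covectors''$, the intersection is $N^-(\covectors^0_e)$, both pieces and their intersection are semisimple COMs by Proposition~\ref{carriers}, the overlap conditions $\covectors'\setminus\covectors''\ne\emptyset\ne\covectors''\setminus\covectors'$ hold because $X$ sits in $\covectors'$ only and $Y$'s face meets $\covectors''\setminus\covectors'$, and $\covectors'\circ\covectors''\subseteq\covectors'$, $\covectors''\circ\covectors'\subseteq\covectors''$ are checked directly. I would then verify condition (4) of the COM-amalgam definition for this specific decomposition: if $X'\in\covectors'\setminus\covectors''$ and $Y'\in\covectors''\setminus\covectors'$ share a zero set, then $\underline{X'}=\underline{Y'}=E\setminus X'^0$ and their restrictions are topes of the OM face $F$ on that support; since the tope graph of an OM is connected (Proposition~\ref{OMCtoTopeGraph}) and $F$'s topes split between the two halfspaces, a shortest path between $X'$ and $Y'$ in the face exists with all vertices and edge-barycenters in $\covectors$, which is exactly (4). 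Thus $(E,\covectors)$ is a COM amalgam of $(E,\covectors')$ and $(E,\covectors'')$ in the precise sense of the definition.

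Next I would close the induction. By Proposition~\ref{amalgam}, every maximal face of $(E,\covectors)$ is a maximal face of $\covectors'$ or of $\covectors''$; since $\covectors'=\covectors^-_e$ and $\covectors''=\overline{\covectors^+_e}$ are proper subsystems — $\covectors'$ omits the entire carrier $N(\covectors^0_e)$ while $\covectors''$ omits the open halfspace $\covectors^-_e$, and each omission kills at least one maximal face — both have strictly fewer maximal faces than $\covectors$. By the induction hypothesis each of $\covectors'$, $\covectors''$ is obtained by successive COM amalgamations from its maximal faces; grafting these two amalgamation trees together under the root amalgamation $\covectors=\covectors'\cup\covectors''$ exhibits $\covectors$ itself as obtained by successive COM amalgamations from the maximal faces of $\covectors'$ and of $\covectors''$, which by Proposition~\ref{amalgam} are precisely the maximal faces of $\covectors$. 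Finally, by Lemma~\ref{lem:fiber} each maximal face $F(W)$ contracts (delete $\underline{W}$) to an OM, giving the parenthetical remark in the statement.

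**Main obstacle.** The routine-looking parts (semisimplicity of the pieces, the composition inclusions) are genuinely routine given Proposition~\ref{carriers}. The one place demanding care is verifying condition (4) of the amalgam definition for the chosen decomposition — i.e., producing the required shortest path inside a maximal face with \emph{all} edge-barycenters present — and, relatedly, confirming that the decomposition strictly decreases the count of maximal faces so the induction is well-founded. Both hinge on the OM structure of faces and the partial-cube structure of their tope graphs (Lemma~\ref{lem:fiber}, Proposition~\ref{OMCtoTopeGraph}); the subtlety is purely that one must phrase ``fewer maximal faces'' correctly, since $\covectors'$ and $\covectors''$ need not have fewer \emph{covectors} in every degenerate situation, but they do lose at least one maximal face each by construction.
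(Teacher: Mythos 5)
Your overall strategy coincides with the paper's: the corollary is obtained there exactly by combining the halfspace decomposition $\covectors=\covectors^-_e\cup\overline{\covectors^+_e}$ described before Proposition~\ref{amalgam} with Propositions~\ref{carriers} and~\ref{amalgam} and Lemma~\ref{lem:fiber}, and recursing. However, two of the supporting arguments you supply are genuinely wrong. First, the base case: a semisimple COM with a single improper cocircuit need \emph{not} be an OM, and your computation $W\circ -W=\mathbf{0}$ is false, since $W\circ -W=W$ for every sign vector $W$. For a counterexample take $E=\{1,2\}$ and $\covectors=\{(+,0),(+,+),(+,-)\}$: this is a semisimple COM (the constant coordinate lies outside $E_{\pm}$) with unique minimal covector $(+,0)\neq\mathbf{0}$, and $\mathbf{0}\notin\covectors$. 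What is true, and all that the corollary needs, is that a unique improper cocircuit $W$ forces $\covectors=F(W)$, i.e.\ the COM is its single maximal face, which after deleting $\underline{W}$ is an OM by Lemma~\ref{lem:fiber}; no amalgamation is required in this case.

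Second, and more seriously, your verification of condition (4) of the amalgam definition cannot work as stated: you place $X'\in\covectors'\setminus\covectors''$ and $Y'\in\covectors''\setminus\covectors'$ with $X'^0=Y'^0$ inside a common OM face, but no such face exists. Indeed, $X'_e=-1$, $Y'_e=+1$, so any $W\in\covectors$ with $W\le X'$ and $W\le Y'$ would have $W_e=0$, whence $X'\in F(W)\subseteq N^-(\covectors^0_e)=\covectors'\cap\covectors''$, contradicting $X'\notin\covectors''$. The correct route is through the contraction $\covectors/A$ with $A:=X'^0=Y'^0$ (or the deletion $\covectors\setminus A$, as in the statement of (4)): $X'\setminus A$ and $Y'\setminus A$ are topes of this COM, and Proposition~\ref{OMCtoTopeGraph}, applied after semisimplification, supplies an isometric path whose edge barycenters lie in the system, with $\covectors/A\subseteq\covectors\setminus A$; one must then still deal with the coordinates that become constant or parallel in the contraction, a point the paper itself leaves implicit. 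Finally, your well-foundedness remark is backwards: $\covectors'$ and $\covectors''$ are always \emph{proper} subsets of $\covectors$ (this is condition (1)), so the number of covectors strictly decreases and is the safe induction measure, whereas the number of maximal faces of a constituent need not be smaller than that of $\covectors$, since minimal covectors of $\covectors^-_e$ need not be minimal in $\covectors$ and new maximal faces may appear.
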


\section{Euler-Poincar\'e formulae}\label{sec:Euler}

In this section, we generalize the Euler-Poincar\'e formula known for OMs to COMs, which involves the rank function.
This is an easy consequence of decomposition and amalgamation. In the case of lopsided systems and their hypercube cells
the rank of a cell is simply expressed as the cardinality of the zero set of its associated covector.

Given an OM of rank $r$, for $0\leq i\leq r-1$ one defines $f_i$ as the number of cells of dimension $f_i$ of the
corresponding decomposition of the $(r-1)$-sphere, see Section~\ref{sec:complexes} for more about this representation.
It is well-known (cf. \cite[Corollary 4.6.11]{bjvestwhzi-93}) that $\sum_{i=0}^{r-1}(-1)^if_i=1+(-1)^{r-1}$. Adding
the summand $(-1)^{-1}f_{-1}=-1$ here artificially yields $\sum_{i=-1}^{r-1}(-1)^if_i=(-1)^{r-1}$. Multiplying
this equation by $(-1)^{r-1}$ and substituting $i$ by $r-1-j$ yields
$$\sum_{j=0}^r(-1)^jf_{r-1-j}=\sum_{i=1}^{r-1}(-1)^{r-1-i}f_i=1.$$
As $f_{r-1-j}$ gives the number of OM faces of rank $j$ we can restate this formula in covector
notation as $\sum_{X\in\covectors}(-1)^{r(X)}=1$, where $r(X)$ is the rank of the OM $F(X)\setminus \underline{X}$.
We define the rank of the covector of a COM in the same way.

Since COMs arise from OMs by successive COM amalgamations, which do not create new faces, and at a
step from $\covectors'$ and $\covectors''$ to the amalgamated $\covectors$ each face in the intersection
is counted exactly twice, we obtain
$$\sum_{X\in\covectors}(-1)^{r(X)}=\sum_{X\in\covectors'}(-1)^{r(X)}+\sum_{X\in\covectors''}(-1)^{r(X)}-\sum_{X\in\covectors'\cap\covectors''}(-1)^{r(X)}=1.$$

\begin{proposition}\label{comeuler}
 Every COM $(E,\covectors)$ satisfies the Euler-Poincar\'e formula $\sum_{X\in\covectors}(-1)^{r(X)}=1$.
\end{proposition}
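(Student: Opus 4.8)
The plan is to derive the Euler-Poincar\'e formula for a COM $(E,\covectors)$ by induction on the number of COM amalgamations needed to build it up from oriented matroids, using Corollary~\ref{amalgam-decomposition} as the structural backbone and the classical OM Euler-Poincar\'e formula as the base case. The key point is that this has essentially already been done in the discussion preceding the proposition, so the ``proof'' is mostly a matter of assembling the ingredients cleanly. First I would recall that by Corollary~\ref{amalgam-decomposition} every (semisimple) COM is obtained by successive COM amalgamations starting from OMs, each of which (after contraction on the support of the associated covector) satisfies $\sum_{X\in\covectors}(-1)^{r(X)}=1$ by the displayed computation from \cite[Corollary 4.6.11]{bjvestwhzi-93}. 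Note that passing to the simplification or semisimplification does not change the face structure nor the ranks of faces, so it suffices to prove the statement for semisimple COMs.

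The inductive step is the heart of the argument: if $\covectors=\covectors'\cup\covectors''$ is a COM amalgam of semisimple COMs $\covectors'$ and $\covectors''$, then by inclusion--exclusion over the disjoint decomposition of $\covectors$ into $\covectors'\setminus\covectors''$, $\covectors''\setminus\covectors'$, and $\covectors'\cap\covectors''$, we have
$$\sum_{X\in\covectors}(-1)^{r(X)}=\sum_{X\in\covectors'}(-1)^{r(X)}+\sum_{X\in\covectors''}(-1)^{r(X)}-\sum_{X\in\covectors'\cap\covectors''}(-1)^{r(X)}=1+1-1=1,$$
where the first two sums equal $1$ by the induction hypothesis applied to $\covectors'$ and $\covectors''$, and the last sum equals $1$ because $(E,\covectors'\cap\covectors'')$ is itself a semisimple COM (condition (2) of the amalgam definition) of strictly smaller size, so again the induction hypothesis applies. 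The one subtlety to check is that the rank function $r(\cdot)$ is intrinsic: the rank $r(X)$ is defined via the OM $F(X)\setminus\underline{X}$, and by Proposition~\ref{amalgam} every maximal face of $\covectors$ is a maximal face of $\covectors'$ or $\covectors''$; more generally, for any $X\in\covectors$ the face $F(X)$ computed in $\covectors$ coincides with the face $F(X)$ computed in whichever constituent contains $X$ (this uses condition (3), $\covectors'\circ\covectors''\subseteq\covectors'$ and $\covectors''\circ\covectors'\subseteq\covectors''$, exactly as in the ``face'' computation in the proof of Proposition~\ref{amalgam}), and similarly for $X\in\covectors'\cap\covectors''$ the face agrees in all three systems. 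Hence $r(X)$ is unambiguous and the three sums above are genuinely restrictions of one another.

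I would organize this as a short induction on $\#\covectors$ (or on the number of amalgamation steps supplied by Corollary~\ref{amalgam-decomposition}): the base case is an OM, handled by the displayed manipulation of \cite[Corollary 4.6.11]{bjvestwhzi-93} recorded before the proposition; the inductive step is the inclusion--exclusion display above. The main obstacle, such as it is, is not a deep one but a bookkeeping point: one must make sure that ``rank of a covector'' really is well-defined independently of which COM in the amalgam tower one computes it in, so that summing over $\covectors'\cap\covectors''$ inside $\covectors$ gives the same numbers as summing over it as a standalone COM. Once that compatibility of faces is noted (it is immediate from condition (3) of the amalgam, as used already in Proposition~\ref{amalgam}), the rest is the two-line inclusion--exclusion computation already exhibited in the text, and the proof is complete.
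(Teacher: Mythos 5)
Your proposal is correct and follows essentially the same route as the paper: the statement is proved there by the discussion immediately preceding it, namely the OM base case obtained from the classical formula of \cite[Corollary 4.6.11]{bjvestwhzi-93} together with induction along the amalgamation decomposition of Corollary~\ref{amalgam-decomposition} and the displayed inclusion--exclusion identity. Your additional remarks on the well-definedness of $r(X)$ across the constituents and on semisimplification only make explicit what the paper leaves implicit in the phrase that amalgamations ``do not create new faces.''
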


We now characterize lopsided systems in terms of an Euler-Poincar\'e formula.
A system $(E,\covectors)$ is said to satisfy the {\it Euler-Poincar\'e formula for zero sets} if
$$\sum_{X\in \covectors} (-1)^{\#X^0}=1.$$

\begin{proposition} \label{Euler}
The following assertions are equivalent for a system $(E,\covectors)$:
\begin{itemize}
\item[(i)] $(E,\covectors)$ is lopsided, that is, $(E,\covectors)$ is a COM satisfying (IC);
\item[(ii)]{\normalfont\cite{Wie}} every topal fiber of $(E,\covectors)$ satisfies the Euler formula  for zero sets, and $\covectors$ is determined
by the topes in the following way: for each sign vector $X\in \{ \pm 1,0\}^E,$ $X\in \covectors\Rightarrow X\circ Y\in \covectors$ for all $Y\in \{ \pm 1\}^E$;
\item[(iii)] every contraction of a topal fiber of $(E,\covectors)$ satisfies the Euler formula  for zero sets in its own right.
\end{itemize}
\end{proposition}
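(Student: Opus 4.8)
The plan is to prove the cycle of implications (i)$\Rightarrow$(ii)$\Rightarrow$(iii)$\Rightarrow$(i), exploiting Proposition~\ref{comeuler} together with the stability of the COM axioms under taking topal fibers and contractions (Lemmas~\ref{lem:minorclosed} and~\ref{lem:fiber}).

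For (i)$\Rightarrow$(ii): Suppose $(E,\covectors)$ is lopsided. By Lemma~\ref{lem:fiber} every topal fiber is again a strong elimination system, and one checks directly that (IC) passes to topal fibers; hence each topal fiber is lopsided. In a lopsided system all faces are hypercubes: if $X\in\covectors$ then $F(X)=\{X\circ Y:Y\in\{\pm1,0\}^E\}$ by (IC), so the face lattice of $F(X)\setminus\underline X$ is a Boolean lattice of rank $\#X^0$, giving $r(X)=\#X^0$. Therefore the general Euler--Poincar\'e formula of Proposition~\ref{comeuler}, applied to the topal fiber, specializes to the Euler formula for zero sets. The second clause of (ii) is just a restatement of (IC) together with the fact that every sign vector of $\covectors$ lies below a tope (which holds since $\covectors$, satisfying (C), is the union of its faces and every face has a tope by Proposition~\ref{OMCtoTopeGraph} applied to $F(X)\setminus\underline X$).

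For (ii)$\Rightarrow$(iii): A contraction of a topal fiber is again (the topes-extension of) a topal fiber of a minor, so it suffices to observe that contractions of topal fibers of $\covectors$ are themselves obtained as topal fibers after a contraction, and the class of systems satisfying the hypotheses in (ii) is closed under contraction. Concretely, if $\R$ is a topal fiber and $A\subseteq E$, then $\R/A$ is a topal fiber of $\covectors/A$; the zero-set Euler formula for $\R/A$ is the instance of the first clause of (ii) for the system $\covectors/A$, which inherits the determined-by-topes property from $\covectors$ because $X\mapsto X\setminus A$ commutes with composition with full-support vectors. Thus (iii) follows.

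For (iii)$\Rightarrow$(i): This is the substantive direction and the main obstacle. The strategy is to show first that $(E,\covectors)$ is a COM and then upgrade to (IC). One shows that the zero-set Euler formula holding for \emph{all} contractions of all topal fibers forces every face $F(X)\setminus\underline X$ to be a hypercube, i.e.\ $r(X)=\#X^0$: indeed, taking the topal fiber to be a face $F(X)$ and contracting appropriately isolates the face, and by induction on $\#X^0$ the Euler relation $\sum(-1)^{\#Z^0}=1$ over that face, compared with the OM Euler--Poincar\'e count $\sum(-1)^{r(Z)}=1$, forces all ranks to be maximal; a minimal counterexample in rank yields a contradiction by contracting one coordinate. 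Once every face is a cube, (IC) follows: for $X\in\covectors$ and arbitrary $W$, the vector $X\circ W$ lies in the hypercube face $F(X)$, hence in $\covectors$. The delicate point — and where I expect to spend the most effort — is establishing that $(E,\covectors)$ is a COM in the first place from the Euler hypotheses alone (so that Proposition~\ref{comeuler} and the face structure are available); one likely needs to follow the argument of~\cite{Wie}, deriving (C), (FS), and (SE) by an inductive dissection of $\covectors$ along a single coordinate $e$, using the Euler formula on the hyperplane $\covectors_e^0$ and on the two halfspaces, and checking that the numerical constraint is tight enough to rule out the failure of strong elimination at $e$.
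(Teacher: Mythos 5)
Your cycle (i)$\Rightarrow$(ii)$\Rightarrow$(iii)$\Rightarrow$(i) has the right overall shape, and your (i)$\Rightarrow$(ii) is fine (it is the paper's easy direction: in a lopsided system $r(X)=\#X^0$, so Proposition~\ref{comeuler} specializes to the zero-set formula). But the step (ii)$\Rightarrow$(iii) is circular as written. You justify the Euler formula for $\R/A$ by invoking ``the first clause of (ii) for the system $\covectors/A$'', yet (ii) is a hypothesis about $(E,\covectors)$ only, and the topal fibers of $\covectors/A$ are exactly the $A$-contractions of topal fibers of $\covectors$ --- i.e.\ the first clause of (ii) for $\covectors/A$ \emph{is} the statement (iii) you are trying to prove. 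You only verify that the determined-by-topes clause passes to $\covectors/A$; the Euler clause does not pass for free. It can be obtained, but only via the halfspace-splitting computation (for a topal fiber $\R$ and $e$, the sums over $\R$, $\R_e^+$, $\R_e^-$ are each $1$, forcing $\sum_{W\in\R_e^0}(-1)^{\#W^0}=-1$ and hence value $+1$ after contracting $e$), together with an induction on $\#A$ and an argument that the halfspaces involved are nonempty, which itself uses the topes-composition property. None of this is in your write-up.

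The more serious gap is (iii)$\Rightarrow$(i), which is the substance of the proposition: everything you want to use there (faces, the rank $r(X)$, Proposition~\ref{comeuler}) presupposes that $(E,\covectors)$ is already a COM, and you defer establishing (C), (FS), (SE) to ``follow the argument of~\cite{Wie}'' with an unspecified inductive dissection along a coordinate --- that is precisely the hard part, not a detail. The paper makes it concrete in two stages: first, from (iii) one extracts by induction on $\#X^0$, using the unit vectors $U^{(e)}$ and the Euler formula on the contraction of the fiber above $X$, that $X\circ U^{(e)}$ and $X\circ -U^{(e)}$ lie in $\covectors$, hence the topes-composition property of (ii) and thus (IC) and (C); second, with composition in hand one applies the recursive characterization of Theorem~\ref{hyperplane}: the tope graph is a partial cube (an Euler parity argument produces a covector with odd zero set in the fiber between two allegedly isolated topes, hence a third tope), barycenters of tope edges belong to $\covectors$ (Euler on the fiber relative to $X$ and $E\setminus e$), and hyperplanes inherit the hypotheses via the identity $\sum_{\covectors}-\sum_{\covectors_e^+}-\sum_{\covectors_e^-}=-1$, closing the induction. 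By aiming at (iii)$\Rightarrow$(i) directly you also deprive yourself of the topes-composition property that makes the hard direction tractable; and your claim that comparing the zero-set count with the OM Euler--Poincar\'e count ``forces all ranks to be maximal'' is only a plausible sketch, usable at best after the COM structure has been established. As it stands, the proposal does not prove the two nontrivial implications.
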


\begin{proof} Deletions, contractions, and fibers of lopsided sets are COMs satisfying (IC) as well, that is, are again lopsided.
In case of a lopsided system $(E,\covectors)$ for every $X\in\covectors$ we have $r(X)=\#X^0$. Therefore by Proposition~\ref{comeuler} $(E,\covectors)$ satisfies the Euler formula  for zero sets. This proves the implication (i)$\Rightarrow$(iii).

As for (iii)$\Rightarrow$(ii), we proceed by induction on $\#X^0$ for $X\in\covectors$. Assume that $X^0$ is not empty. Pick $e\in X^0$ and delete the coordinate subset $X^0\setminus e$ from $X$. Consider the topal fiber $\mathcal{R}=\{X'\in\covectors : X'\setminus \underline{X}=X\setminus \underline{X}\}$ relative to $X$ and $\underline{X}$, and contract $\mathcal{R}$ to $\mathcal{R}/(X^0\setminus e)$. Let $U^{(e)}$ denote the (unit) sign vector on $E$ with $U^{(e)}_e=+1$ and $U^{(e)}_f=0$ for $f\neq e$. Since $\mathcal{R}/(X^0\setminus e)$ satisfies the Euler-Poincar\'e formula for zero sets, both $X\circ U^{(e)}$ and $X\circ -U^{(e)}$ must belong to $\covectors$. By the induction hypothesis
$$(X\circ U^{(e)})\circ Z, (X\circ -U^{(e)})\circ Z\in\covectors \mbox{ for all } Z\in\{\pm 1\}^E,$$
whence indeed $X\circ Y\in \covectors$ for all $Y\in\{\pm 1\}^E$.

To prove the final implication (ii)$\Rightarrow$(i), we employ the recursive characterization of Theorem~\ref{hyperplane}. Since (IC) holds by the implication for $X\in\covectors$ in (ii),
property (1) of this theorem is trivially fulfilled. Observe that $\{\pm 1\}\subseteq\{X_e : X\in\covectors\}$ because the topal fiber relative to $X\in\covectors$ and $\underline{X}\neq E$ contains all
possible $-1,+1$ entries. If $X,Y$ are two topes of $\covectors$ with $S(X,Y)=\{e\}$, then the topal fiber relative to $X$ and $E\setminus e$ must contain $\frac{1}{2}(X+Y)$
by virtue of the Euler-Poincar\'e formula for zero sets. This establishes property (4) and (RN1).

Suppose that the topes of $\covectors$ do not form a partial cube in $\{\pm 1\}^E$. Then choose topes $X$ and $Y$ with $\#S(X,Y)\geq 2$ as small as possible such that the topal
fiber $\mathcal{R}$ relative to $X$ and $E\setminus S(X,Y)$ include no other topes than $X$ or $Y$. The formula for zero sets implies that this topal fiber $\mathcal{R}$ must contain
at least some $Z\in\covectors$ with $Z^0$ of odd cardinality. Then for $e\neq f$ in $S(X,Y)$ one can select signs for some tope $Z'$ conforming to $Z$ such that $Z'_eZ'_f\neq X_eX_f=Y_eY_f$.
Hence $\mathcal{R}$ contains the tope $Z'$ that is different from $X$ and $Y$, contrary to the hypothesis. This contradiction establishes that $\covectors$ fulfills property (3) and is semisimple.

Consider the hyperplane $\covectors_e^0$ and the corresponding halfspaces $\covectors_e^+$ and $\covectors_e^-$( which are two disjoint topal fibers of $\covectors$). Then the formula
$$\sum_{X\in\covectors}(-1)^{\#X^0}-\sum_{Y\in\covectors_e^+}(-1)^{\#Y^0}-\sum_{Z\in\covectors_e^-}(-1)^{\#Z^0}=-1$$
amounts to $$\sum_{W\in\covectors_e^0\setminus e}(-1)^{\#W^0}=1,$$
showing that the hyperplane after semisimplification satisfies the Euler-Poincar\'e formula. The analogous conclusion holds for any topal fiber $\covectors\setminus A$ of any 
$X\in\covectors$ with $A\subseteq\underline{X}$ because taking topal fibers and contractions commute. By induction we conclude that $(E,\covectors)$ is a COM satisfying (IC), 
that is, a lopsided system.
\end{proof}

Note that the equivalence of (i) and (ii) in Proposition~\ref{Euler} rephrases a result by Wiedemann~\cite{Wie} on lopsided sets. Observe that in condition (iii) 
one cannot dispense with contractions as the example $\covectors=\{+00\}$ shows. Neither can one weaken condition (ii) by dismissing topal fibers: consider a path 
in the 1-skeleton of $[-1,+1]^3$ connecting five vertices of the solid cube, which would yield an induced but non-isometric path of the corresponding graphical 
3-cube. Let $\covectors$ comprise the five vertices and the barycenters of the four edges, being represented by their sign vectors. Then all topal fibers except 
one satisfy the first statement in (ii), the second one being trivially satisfied.

\section{Ranking COMs}\label{sec:ranking}
Particular COMs naturally arise in order theory. For the entire section, let $(P,\leq)$ denote an ordered set (alias poset), that is, a finite set $P$ endowed 
with an order (relation) $\leq$.  A {\it ranking} (alias weak order) is an order for which incomparability is transitive. Equivalently, an order $\leq$ on $P$ 
is a ranking exactly when $P$ can be partitioned into antichains (where an \emph{antichain} is a set of mutually incomparable elements) $A_1, \ldots, A_k$, 
such that $x\in A_i$ is below $y\in A_j$ whenever $i<j$. An order $\leq$ on $P$ is  \emph{linear} if any two elements of $P$ are comparable, that is, all 
antichains are trivial (i.e., of size $<2$). An order $\leq'$ \emph{extends} an order $\leq$  on $P$ if $x\leq y$ implies $x\leq' y$ for all $x,y\in P$. 
Of particular interest are the linear extensions and, more generally, the ranking extensions of a given  order $\leq$ on $P$.

Let us now see how to associate a set of sign vectors to an order $\leq$ on $P = \{1,2,\ldots,n\}$. For this purpose take $E$ to be the set of all 2-subsets 
of $P$ and encode $\leq$ by its characteristic sign vector $X^{\leq} \in \{0,\pm 1\}^E$, which to each 2-subset $e = \{ i,j\}$ assigns $X^{\leq}_e = 0$ if 
$i$ and $j$ are incomparable, $X^{\leq}_e = +1$ if the order agrees with the natural order on the 2-subset $e$, and else $X^{\leq}_e = -1$. In the sign 
vector representation the different components are ordered with respect to the lexicographic natural order of the 2-subsets of $P$.

The composition of sign vectors from different orders $\leq$ and $\leq'$ does not necessarily return an order again. Take for instance, $X^{\leq}=+++$ 
coming from the natural order on $P$ and $X^{\leq'}=0-0$ coming from the order with the single (nontrivial) comparability $3\leq'1$. The composition 
$X^{\leq'}\circ X^{\leq}$ equals $+-+$, which signifies a directed 3-cycle and thus no order. The obstacle here is that $X^{\leq'}$ encodes an order 
for which one element is incomparable with a pair of comparable elements. Transitivity of the incomparability relation is therefore a necessary condition 
for obtaining a COM.

We denote by $\mathcal{R}(P,\leq)$ the simplification of the set of sign vectors associated to all ranking extensions of $(P,\leq)$. Note that the 
simplification amounts to omitting the pairs of the ground set corresponding to pairs of comparable pairs of $P$.

\begin{theorem}
 Let $(P,\leq)$ be an ordered set. Then $\mathcal{R}(P,\leq)$ is a realizable COM, called the \emph{ranking COM} of $(P,\leq)$.
\end{theorem}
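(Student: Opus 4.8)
The plan is to exhibit $\mathcal{R}(P,\leq)$ explicitly as a set $\covectors(E,C)$ coming from a hyperplane arrangement restricted to an open convex set, and then invoke the discussion of the introduction, by which every such $\covectors(E,C)$ satisfies (C), (FS) and (SE), hence is a COM. Write $P=\{1,\dots,n\}$ and work in $\mathbb{R}^n$ with coordinates $x_1,\dots,x_n$. For an incomparable pair $e=\{i,j\}$ of $(P,\leq)$ with $i<j$, let $H_e$ be the linear hyperplane $\{x\in\mathbb{R}^n: x_i=x_j\}$, oriented so that $x_i<x_j$ is the positive side, and let $E$ be the central arrangement consisting of these $H_e$, indexed by the incomparable pairs --- which is exactly the ground set of $\mathcal{R}(P,\leq)$ after simplification. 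Let
$$C:=\{x\in\mathbb{R}^n:\ x_a<x_b\ \text{whenever}\ a<b\ \text{in}\ (P,\leq)\},$$
an intersection of open halfspaces, hence open and convex, and nonempty since the height function of any linear extension of $\leq$ lies in $C$.

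The first and main point is the correspondence between points of $C$ and ranking extensions of $\leq$. A point $x\in C$ induces a weak order $\leq_x$ (with $i<_x j$ iff $x_i<x_j$ and $i\sim_x j$ iff $x_i=x_j$); this $\leq_x$ extends $\leq$ precisely because the defining inequalities of $C$ force the strict relations of $\leq$. Conversely every ranking extension $\leq'$ of $\leq$ arises as $\leq_x$ by choosing $x$ constant on each antichain block of $\leq'$ and strictly increasing across blocks (this $x$ lands in $C$ because comparabilities of $\leq$ become strict in $\leq'$). Since $C$ is open, a relatively open cell of the arrangement $E$ meets $C$ if and only if it contains a point of $C$, so $\covectors(E,C)=\{\operatorname{sign}(x):x\in C\}$, where $\operatorname{sign}(x)\in\{0,\pm1\}^E$ records $\operatorname{sign}(x_j-x_i)$ at $e=\{i,j\}$. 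As $\operatorname{sign}(x)$ is exactly the restriction of $X^{\leq_x}$ to the incomparable pairs, I obtain
$$\covectors(E,C)=\bigl\{\,{X^{\leq'}}\!\restriction_E:\ \leq'\ \text{a ranking extension of}\ (P,\leq)\,\bigr\}.$$

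It then remains to match this set with $\mathcal{R}(P,\leq)$, i.e. to see that the simplification removes no incomparable-pair coordinate. This follows once I show $H_e\cap C\neq\emptyset$ for every incomparable pair $e=\{i,j\}$: then the coordinate $e$ attains the value $0$ (so it is not non-zero constant), and since the $H_e$ are pairwise distinct hyperplanes each meeting the full-dimensional open set $C$ in a relatively open $(n-1)$-dimensional set, their affine hulls are pairwise distinct, so no two induce parallel coordinates. To produce a point of $H_e\cap C$ I would exhibit a ranking extension $\leq'$ of $\leq$ with $i\sim' j$: form the preorder $\leq^{*}$ generated by ${\leq}\cup\{(i,j),(j,i)\}$ and check that its only nontrivial equivalence class is $\{i,j\}$. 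Indeed, after observing that a reduced $\leq^{*}$-chain uses at most one of the two new edges (two uses would force a chain $j\leq\cdots\leq i$ or $i\leq\cdots\leq j$ in $(P,\leq)$), any relation $a\equiv^{*}b$ with $a\neq b$, $\{a,b\}\neq\{i,j\}$ likewise yields such a chain, contradicting incomparability of $i$ and $j$. Hence the antisymmetric quotient of $\leq^{*}$ is a poset; any linear extension of it pulls back to the desired $\leq'$, whose block-constant height function lies in $H_e\cap C$.

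I expect the last paragraph --- verifying that identifying two incomparable elements creates no spurious cycles --- to be the only genuinely combinatorial step; the identification $\covectors(E,C)=\mathcal{R}(P,\leq)$ and the appeal to realizable COMs being COMs are routine, and in fact the paper already records that the simplification of $\mathcal{R}(P,\leq)$ just deletes the comparable pairs.
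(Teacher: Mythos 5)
Your proposal is correct, but it takes a genuinely different route from the paper. The paper proves the theorem by verifying the axioms combinatorially: (C) and (FS) are obtained by imposing one ranking (or its reverse) on the antichains of another, and (SE$^=$) by an explicit insertion construction that places the elements strictly between $i$ and $j$ below or above the newly created antichain $\{i,j\}$; realizability is only established afterwards, by restricting the braid arrangement to the interior of the order polytope, observing that the resulting realizable COM has the same \emph{topes} as $\mathcal{R}(P,\leq)$, and invoking the result of Section~4 that a semisimple COM is determined by its tope set. You instead identify the full covector set: $\mathcal{R}(P,\leq)=\covectors(E,C)$ for the subarrangement $\{H_{ij}: i,j \text{ incomparable}\}$ restricted to the open cone $C=\bigcap_{a<b}\{x_a<x_b\}$, via the bijection between points of $C$ and ranking extensions, and you justify that simplification only deletes the comparable pairs by producing, for each incomparable pair, a ranking extension merging $i$ and $j$ (your preorder-quotient lemma) and by ruling out parallel coordinates geometrically. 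This yields COM-ness and realizability in one stroke and is stronger than the paper's tope-level comparison (so it needs no appeal to Proposition~\ref{topes}); its cost is that it leans on the paper's standing assertion that arrangement restrictions $\covectors(E,C)$ satisfy (C), (FS), (SE), whereas the paper's own verification is self-contained on the combinatorial side. One small point to make explicit: for an incomparable pair $e$ to survive simplification you need all three signs at $e$, not merely non-constancy; this is immediate from your construction, since $H_e\cap C\neq\emptyset$ and $C$ open give points of $C$ on both sides of $H_e$, but it deserves a sentence.
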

\begin{proof}
The composition $X\circ Y$ of two sign vectors $X$ and $Y$ which encode rankings has an immediate order-theoretic interpretation: each (maximal) 
antichain of the order $\leq_X$ encoded by $X$ gets ordered according to the order $\leq_Y$ corresponding to $Y$. Similarly, in order to realize 
$X\circ -Y$ one only needs to reverse the order $\leq_Y$ before imposing it on the antichains of $\leq_X$. This establishes conditions (C) and (FS). 
To verify strong elimination (SE$^=$), assume that $X$ and $Y$ are given with $\underline{X}=\underline{Y}$, so that the corresponding rankings have 
the same antichains. These antichains may therefore be contracted (and at the end of the process get restored again). Now, for convenience we may 
assume that $X$ is the constant $+1$ vector, thus representing the natural linear order on $P$. Given $e = \{ i,j\}$ with $i <_X j$, let $Y_e = -1$, 
that is, $j <_Y i$. To construct a sign vector $Z$ with $Z_e = 0$ and $Z_f = X_f$ whenever $Y_f = X_f$, take the sign vector of the ranking corresponding 
to $X$ but place the subchain $\{h : i <_X h <_X j \mbox{ and } h <_Y j \}$ directly below the newly created antichain $\{ i,j\}$, and 
$\{h : i <_X h <_X j \mbox{ and } j <_Y h \}$ directly above it, while leaving everything else in the natural order. This establishes that $\mathcal{R}(P,\leq)$ 
is a COM. Realizability of $\mathcal{R}(P,\leq)$ will be confirmed in the third paragraph below. 
\end{proof}

To provide an example for a ranking COM and also illustrate the preceding construction, consider the ordered set (``fence'') shown in Figure~\ref{fig:poset}. 
In Figure~\ref{fig:lextgraph}, the sign vector $X$ encodes the natural order $<$ and $Y$ the ordering $3 <_Y 2 <_Y j = 5 <_Y i = 1 <_Y 4$, while $Z$ encodes the 
intermediate ranking with $2 <_Z 3 <_Z 1$ and $5 <_Z 4$.

\begin{figure}[htb]
   \centering
   \subfigure[\label{fig:poset}]{
    \includegraphics[width = .15\textwidth]{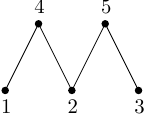}
   }
   \subfigure[\label{fig:lextgraph}]{
    \includegraphics[width = .35\textwidth]{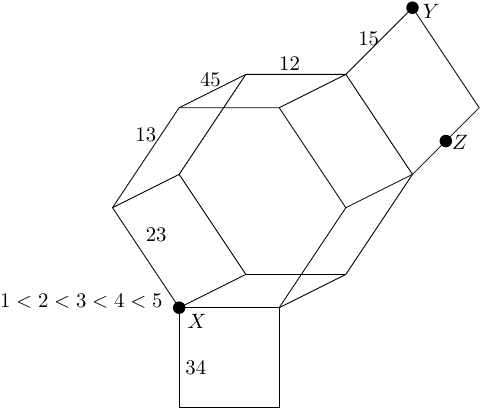}
   }
   \subfigure[\label{fig:weakexts}]{
    \includegraphics[width = .35\textwidth]{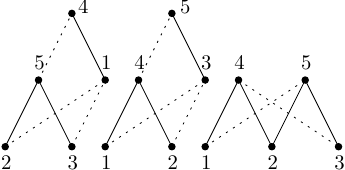}
   }
   \caption{From \subref{fig:poset} an ordered set $(P,\le)$ to \subref{fig:lextgraph} the ranking COM $\mathcal{R}(P,\le)$ comprising three maximal faces determined by \subref{fig:weakexts} the minimal rankings in $\mathcal{R}(P,\le)$.}
  \end{figure}

The ranking COM $\mathcal{R}(P,\le)$ is the natural host of all linear extensions of $(P,\le)$ (as its topes), where the interconnecting rankings signify the cell structure. 
The \emph{linear extension graph} of an ordered set $(P,\leq)$ is defined on the linear extensions of $(P,\leq)$, where two linear extensions are joined by an edge iff they 
differ on the order of exactly one pair of elements. Thus, the linear extension graph of $(P,\le)$ is the tope graph of $\mathcal{R}(P,\le)$.  A number of geometric and 
graph-theoretical features of linear extensions have been studied by various authors~\cite{Reu,Naa,Mas}, which can be expressed most naturally in the language of COMs.

\label{real} One such result translates to the fact that ranking COMs are realizable. To see this, first consider the \emph{braid arrangement} of type $B_n$, i.e., 
the central hyperplane arrangement $\{H_{ij}: 1\leq i<j\leq n\}$ in $\mathbb{R}^n$, where $H_{ij}=\{x\in \mathbb{R}^n: x_i=x_j\}$ and the position of any point 
in the corresponding halfspace $\{x\in \mathbb{R}^n: x_i<x_j\}$ is encoded by $+$ with respect to $H_{ij}$. The resulting OM is known as the \emph{permutahedron}~\cite{bjvestwhzi-93}. 
Given an order $\trianglelefteq$ on $P=\{1,\ldots, n\}$ consider the arrangement $E=\{H_{ij}: i, j\text{ incomparable}\}$ restricted to the open polyhedron  
$\bigcap_{i\vartriangleleft j}\{x\in \mathbb{R}^n: x_i<x_j\}$. The closure of the latter intersected with the unit cube $[0,1]^n$ coincides with the  
\emph{order polytope}~\cite{Sta-86} of $(P,\trianglelefteq)$. It is well-known that the maximal cells of the braid arrangement restricted to the order polytope 
of $(P,\trianglelefteq)$ correspond to the linear extensions of $(P,\trianglelefteq)$. Thus, the COM realized by the order polytope and the braid arrangement 
has the same set of topes as the ranking COM. By the results of Section~\ref{sec:topes}, this implies that both COMs coincide. In particular, ranking COMs are realizable.

We will now show how other notions for general COMs translate to ranking COMs.  A  face  $\mathcal F$ of $\mathcal R(P,\le)$, as defined in Section~\ref{sec:minors}, 
can be viewed as the set of all rankings that extend some ranking extension $\le'$ of $\le$. Hence ${\mathcal F}\cong\mathcal R(P,\le')$, i.e., all faces of a ranking 
COM are also ranking COMs. The minimal elements of $\mathcal{R}(P,\le)$ with respect to sign ordering (being the improper cocircuits of $\mathcal{R}(P,\le)$) are 
the minimal ranking extensions of $(P,\le)$, see Figure~\ref{fig:weakexts}.

A hyperplane $\mathcal{R}^0_e$ of ${\mathcal R}:=\mathcal{R}(P,\le)$ relative to $e=\{i,j\}\in E$ corresponds to those ranking extensions of $(P,\le)$ leaving $i,j$ 
incomparable. Thus,  $\mathcal{R}^0_e$ can be seen as the ranking COM of the ordered set obtained from $(P,\le)$ by identifying $i$ and $j$. The open halfspace 
$\mathcal{R}^+_e$ corresponds to those ranking extensions fixing the natural order on $i,j$ and is therefore the ranking COM of the ordered set $(P,\le)$ extended 
with the natural order on $i,j$. The analogous statement holds for $\mathcal{R}^-_e$. Similarly, the carrier of $\mathcal R$ relative to $e$ can be seen as the 
ranking COM of the ordered set arising as the intersection of all minimal rankings of $(P,\le)$ not fixing an order on $i,j$. So, in all three cases the resulting 
COMs are again ranking COMs.

One may wonder which are the ordered sets whose ranking COM is an OM or a lopsided system. The maximal cells in Figure~\ref{fig:lextgraph} are symmetric and 
therefore correspond to OMs. 

\begin{proposition}\label{prop:om}
The ranking COM of $(P,\leq)$ is an OM if and only if $\leq$ is a ranking. In this case, $\mathcal{R}(P,\leq)$ and its proper faces are products of permutohedra. 
\end{proposition}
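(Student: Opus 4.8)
The plan is to prove both directions via the axiomatic characterization of OMs, namely that a COM is an OM precisely when it satisfies (Z), equivalently when the zero sign vector $\mathbf 0$ belongs to $\covectors$, equivalently when $\mathcal R(P,\le)$ has a single improper cocircuit (recall from Section~\ref{sec:amalgam} that a semisimple COM fails to be an OM exactly when it has at least two improper cocircuits). For the ``only if'' direction, I would argue contrapositively: suppose $\le$ is not a ranking, so incomparability is not transitive and there exist $a,b,c\in P$ with $a\le b$, $b$ incomparable to $c$, and $a$ incomparable to $c$ (or the dual configuration). Then I would exhibit a ranking extension whose characteristic sign vector $X$ has $X_{\{a,c\}}=0$ but, because $a\le b$ forces any extension to respect it and the incomparabilities can be resolved independently, the minimal ranking extensions split into at least two distinct improper cocircuits (one placing $c$ below $a$, one placing $c$ above $b$), neither dominating the other in the sign order; hence $\mathbf 0\notin\mathcal R(P,\le)$ and the COM is not an OM. The cleanest phrasing: $\mathbf 0\in\mathcal R(P,\le)$ would mean the empty order is a ranking extension of $\le$ on the non-redundant ground set, i.e.\ $\le$ itself is an antichain after removing comparable pairs, which (since simplification only deletes comparable pairs) forces $\le$ to be a ranking.

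For the ``if'' direction, suppose $\le$ is a ranking with antichain decomposition $P=A_1\cup\cdots\cup A_k$ where every element of $A_i$ lies below every element of $A_j$ for $i<j$. Then a ranking extension of $\le$ is obtained by independently choosing, for each block $A_i$, a ranking of $A_i$ — and conversely every ranking extension of $\le$ restricts to a ranking on each $A_i$ and imposes no cross-block relations beyond the forced ones. After simplification the ground set $E$ is exactly the disjoint union of the $2$-subsets within the blocks, $E=\bigsqcup_i \binom{A_i}{2}$, and the sign vector decomposes as a concatenation $X=(X^{(1)},\dots,X^{(k)})$ with $X^{(i)}$ ranging over all sign vectors of rankings of $A_i$. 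Therefore $\mathcal R(P,\le)=\prod_{i=1}^k \mathcal R(A_i,=)$, where $(A_i,=)$ is the antichain; but $\mathcal R$ of an antichain on $m$ elements is, by the realization via the braid arrangement restricted to the whole space (third paragraph of Section~\ref{real}, with the order polytope replaced by $\mathbb R^n$ itself since there are no forced inequalities), exactly the OM of the braid arrangement $B_m$, i.e.\ the permutohedron. Since (Z) holds for each factor (the empty ranking of $A_i$ gives $\mathbf 0$), (Z) holds for the product, so $\mathcal R(P,\le)$ is an OM and equals a product of permutohedra. The ``proper faces'' clause follows because, as noted just above the proposition, every face of $\mathcal R(P,\le)$ is $\mathcal R(P,\le')$ for a ranking extension $\le'$ of $\le$; any such $\le'$ is again a ranking, so by the same argument $\mathcal R(P,\le')$ is a product of permutohedra.

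I expect the main obstacle to be the ``only if'' direction — specifically, making precise that when incomparability fails to be transitive one genuinely produces two incomparable minimal ranking extensions rather than merely two ranking extensions. The point to nail down is that the three-element obstruction $a\le b$, $a\parallel c$, $b\parallel c$ (or its order dual) survives simplification (none of $\{a,c\},\{b,c\}$ is a comparable pair, so these coordinates are not deleted) and that extending to a minimal ranking one is forced to resolve $\{a,c\}$ and $\{b,c\}$ in a correlated way: a minimal ranking extension cannot leave $c$ incomparable to all of $\{a,b\}$ in a way that yields a single bottom element. Concretely, the two minimal completions ``$c$ directly below the block of $a$'' and ``$c$ directly above the block of $b$'' have characteristic sign vectors that disagree in sign on $\{a,c\}$, so neither is $\le$ the other, giving two improper cocircuits. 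Alternatively — and this may be the slicker route — one can simply invoke that for a COM, OM $\iff$ (Sym), and observe directly that $\mathcal R(P,\le)$ is closed under global sign reversal iff reversing every extension again yields an extension of $\le$ iff $\le$ has no strict comparabilities on the non-redundant part iff $\le$ is a ranking; I would present whichever of these two equivalent arguments reads most cleanly, probably the (Sym) one since it avoids cocircuit bookkeeping entirely.
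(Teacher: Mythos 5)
Your proposal is correct and follows essentially the same route as the paper: the ``only if'' direction via the observation that an OM must contain the zero covector (its unique improper cocircuit), which forces the trivial extension, i.e.\ $\leq$ itself, to be a ranking, and the ``if'' direction via the block decomposition of a ranking, which exhibits $\mathcal{R}(P,\leq)$ as a product of permutohedra. The only cosmetic difference is that you close the ``if'' direction by invoking (Z) (with (FS) and (SE)) where the paper invokes symmetry of the product of permutohedra; both are valid, and your explicit treatment of the proper-faces clause via faces being ranking COMs of ranking extensions is exactly the intended argument.
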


\begin{proof}
Since any OM has a unique improper cocircuit, $\leq$ needs to be a ranking in order to have that $\mathcal{R}(P,\leq)$ is an OM. On the other hand, it is 
easy to see that if $\leq$ is a ranking on $P$, then $\mathcal{R}(P,\leq)$ is a product of permutohedra and, in particular, is symmetric, yielding the claim.
\end{proof}

\begin{proposition} The ranking COM of $(P,\leq)$ is a lopsided system if and only if $(P,\leq)$ has width at most 2. In this case, the tope graph of $\mathcal{R}(P,\leq)$ is the covering graph (i.e., undirected Hasse diagram) of a distributive lattice.
\end{proposition}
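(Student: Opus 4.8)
The plan is to characterize when the ranking COM $\mathcal{R}(P,\le)$ satisfies (IC), using the recursive structure of faces and the realizable model via the order polytope. First I would recall that $\mathcal{R}(P,\le)$ is lopsided precisely when every face is lopsided and, via Proposition~\ref{Euler}, when the Euler--Poincar\'e formula for zero sets holds on all topal fibers and their contractions; but it is cleaner to work directly with the cell structure. The key geometric fact is that a COM is lopsided iff all its maximal faces are cubes (products of segments), equivalently all maximal faces are OMs of rank equal to the size of their zero set. Since by Lemma~\ref{lem:fiber} each maximal face of $\mathcal{R}(P,\le)$ contracts to an OM, and by Proposition~\ref{prop:om} together with the discussion that faces of ranking COMs are again ranking COMs $\mathcal{R}(P,\le')$ for ranking extensions $\le'$ of $\le$, the maximal faces are products of permutohedra. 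A product of permutohedra is a cube iff every factor is a single edge (the permutohedron on $2$ elements), i.e. iff every antichain of the corresponding minimal ranking $\le'$ has size at most $2$.

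Next I would translate this into a width condition on $(P,\le)$. On the one hand, if $(P,\le)$ has width $\ge 3$, pick an antichain $\{a,b,c\}$; one can extend $\le$ to a minimal ranking $\le'$ in which $a,b,c$ remain mutually incomparable, so $\mathcal{R}(P,\le')$, a maximal face of $\mathcal{R}(P,\le)$, contains a permutohedron factor on three elements and is therefore not a cube, so (IC) fails. Conversely, if $(P,\le)$ has width at most $2$, then by Dilworth every ranking extension, and in particular every minimal ranking extension $\le'$, has all antichains of size at most $2$ (an antichain in $\le'$ is an antichain in $\le$); hence every maximal face is a product of edges, i.e. a cube, and one checks (IC) holds: substituting any zero coordinate of a covector by $\pm1$ corresponds to choosing, for some incomparable pair left incomparable by the ranking, one of the two orientations, which keeps us inside the cube — so $\mathcal{R}(P,\le)$ is lopsided. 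I expect the main obstacle to be the "conversely" direction's bookkeeping: one must verify that being a COM all of whose maximal faces are cubes actually yields (IC), not merely (C) and (FS); here I would invoke that a COM whose tope graph is a partial cube arising as a complex of cubes is lopsided, or argue directly via Proposition~\ref{Euler}(iii) that on every contracted topal fiber the Euler formula for zero sets holds because each such fiber is again a product of cubes.

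For the second assertion, assume $(P,\le)$ has width at most $2$, so $\mathcal{R}(P,\le)$ is lopsided and its tope graph is the linear-extension graph of $(P,\le)$. The linear extensions of a width-$\le 2$ poset are well known to form a distributive lattice under the natural order (the set of order ideals of the incomparability structure, or equivalently via the lattice of antichains / Greene--Kleitman theory), and adjacency in the linear-extension graph is exactly covering in this lattice: two linear extensions differ in the order of a single incomparable pair iff one covers the other. Thus the tope graph is the covering graph (undirected Hasse diagram) of a distributive lattice. I would present this last step briefly, citing the standard correspondence between linear extensions of a width-$2$ poset and a distributive lattice, and noting that the "differ in one transposition of an incomparable adjacent pair" relation is precisely the cover relation there; the only subtlety is checking that an adjacent transposition in a linear extension of a width-$\le 2$ poset always stays a linear extension of $(P,\le)$, which holds because transposing two consecutive elements that are incomparable in $(P,\le)$ cannot violate any relation of $\le$.
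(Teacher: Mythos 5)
Your ``only if'' direction is fine in outline and is essentially the paper's argument: an antichain $\{a,b,c\}$ survives in some (minimal) ranking extension $\le'$, and the corresponding face violates (IC) -- most directly because the sign vector obtained from the covector of $\le'$ by setting the coordinate $\{a,b\}$ to $+1$ while leaving $\{a,c\},\{b,c\}$ at $0$ encodes no ranking (incomparability would fail to be transitive), so it cannot be a covector. The genuine gap is in the converse, and it is exactly the step you flag yourself: passing from ``all maximal faces are cubes'' to (IC). Neither of your two proposed repairs works as written. The first (``a COM whose tope graph is a partial cube arising as a complex of cubes is lopsided'') is not a result available here and is essentially what has to be proved. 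The second, via Proposition~\ref{Euler}(iii), rests on the claim that every contracted topal fiber is again a product of cubes; this is false: as noted in Section~\ref{sec:minors}, the entire system is itself a topal fiber (take $A=E$), and for width~$2$ the ranking COM is in general only a \emph{complex} of cubes, not a product of cubes (e.g.\ the Fibonacci-cube examples of Section~\ref{sec:ranking}). So invoking Proposition~\ref{Euler}(iii) would force you to verify the Euler formula for zero sets on all these fibers from scratch, which is the actual content, not a shortcut.

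The gap is easy to close, and the closure is in effect your own parenthetical remark, made precise. The paper argues directly: a zero entry of a covector $X$ is an incomparable pair of the ranking $\le_X$; since antichains of $\le_X$ are antichains of $\le$, width $\le 2$ forces this pair to be a \emph{maximal} antichain (a block) of $\le_X$, and imposing either order on the two elements of a maximal $2$-antichain of a ranking yields again a ranking extension of $\le$, whose covector differs from $X$ only at that entry; iterating gives (IC). Alternatively, your cube route can be completed without any extra machinery: by Proposition~\ref{prop:om} together with the fact that faces of ranking COMs are ranking COMs, the face of a minimal covector $W$ is a product of permutohedra on the blocks of $\le_W$; width $\le 2$ makes every factor a segment, so $F(W)\setminus\underline{W}=\{\pm1,0\}^{W^0}$, i.e.\ $F(W)=\covectors\cap\uparr\{W\}$ consists of \emph{all} sign vectors above $W$; then for any $X\in\covectors$ and any sign vector $Y$, choose a minimal $W\le X$ and observe $X\circ Y\ge W$, hence $X\circ Y\in F(W)\subseteq\covectors$, which is (IC). For the second assertion, citing ``the standard correspondence'' (order ideals of the incomparability structure, Greene--Kleitman) is too thin to count as a proof; the paper supplies the argument via Dilworth: cover $P$ by two chains $C,D$, encode linear extensions as order-preserving maps $C\to D\cup\{\hat 1\}$, note that the comparabilities between $C$ and $D$ cut out an order interval of the distributive lattice of all such maps, and that covers in this lattice are precisely adjacent transpositions, i.e.\ edges of the tope graph. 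Your observation that two linear extensions differing on exactly one pair must differ by an adjacent transposition is correct and is one half of that identification, but the distributive-lattice structure itself needs to be established, not just cited.
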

\begin{proof}
 If $(P,\leq)$ contains an antichain of size $3$, then the corresponding face of $\mathcal{R}(P,\leq)$ does not satisfy ideal composition, so that $\mathcal{R}(P,\leq)$ is not lopsided. Conversely, if all antichains have size at most $2$, then the zero entries of a sign vector $X$ encoding a ranking of $(P,\leq)$ correspond to maximal antichains of size $2$. Thus, choosing any sign on a zero entry just corresponds to fixing a linear order on the two elements of the antichain. Since these antichains are maximal and $X$ encodes a ranking, the resulting sign vector encodes a ranking, too. This proves ideal composition.

Let us now prove the second part of the claim. By Dilworth's Theorem~\cite{Dil-50}, $(P,\leq)$ can be covered by two disjoint chains, $C$ and $D$.
A linear extension of $(P,\leq)$ corresponds to an order-preserving mapping of $C$ to positions between consecutive elements of $D$ or above or below its maximal or minimal element, respectively. A linear extension $\trianglelefteq$ of $(P, \leq)$ can thus be codified by an order-preserving mapping $f$ from $C$ to the chain $\hat{D}=D\cup\hat{1}$, i.e., $D$ with a new top element $\hat{1}$ added: $f(c)=d\in\hat{D}$ signifies that the subchain $f^{-1}(d)$ of $C$ immediately precedes $d$ in the linear extension $\trianglelefteq$. If there are no comparabilities between $C$ and $D$ in $(P,\leq)$, then the tope graph of $\mathcal{R}(P,\leq)$ is the covering graph of the entire distributive lattice $L$ of order-preserving mappings from $C$ to $\hat{D}$ since covering pairs in $L$ correspond to pairs of linear extensions which are distinguished by a single neighbors swap. Additional covering relations between $C$ and $D$ yield lower and upper bounds for the order-preserving mappings, whence the resulting (distributive) linear-extension lattice of $(P,\leq)$ constitutes some order-interval of $L$.
\end{proof}

\begin{figure}[htb]
   \centering
   \subfigure[\label{fig:width2}]{
    \includegraphics[width = .15\textwidth]{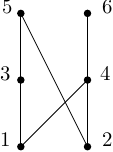}
   }
   \subfigure[\label{fig:distlatt}]{
    \includegraphics[width = .35\textwidth]{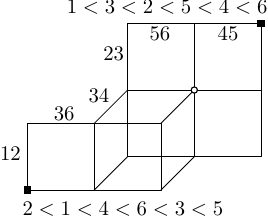}
   }
   \caption{From~\subref{fig:width2} the Hasse diagram of  $(P,\le)$ having width 2 to~\subref{fig:distlatt} the tope graph of the lopsided system 
   $\mathcal{R}(P,\le)$ oriented as a distributive lattice.}\label{fig:Fig4}
  \end{figure}

In Figure~\ref{fig:width2}, an ordered set $(P, \leq)$ of width $2$ is displayed, which has the natural order on $\{1, \ldots, 6\}$ among its linear 
extensions. Figure~\ref{fig:distlatt} shows the tope graph of the lopsided system $\mathcal{R}(P,\leq)$ and highlights the pair of diametrical 
vertices that determine the distributive lattice orientation (and its opposite); the natural order is associated with the (median) vertex 
(indicated by a small open circle). If we added the compatibility $3<6$ to the Hasse diagram, then the tope graph shrinks by collapsing the 
(two) edges corresponding to $\{3, 6\}$. The resulting graph with $F_7=13$ vertices is known as the ``Fibonacci cube of order $5$''.

More generally, the \emph{Fibonacci cube of order} $n-1\geq 1$ is canonically obtained as the tope graph of $\mathcal{R}(\{1, \ldots n\},\leq)$, 
where $(\{1, \ldots n\},\leq)$ is the ordered set determined by the (cover) comparabilities $1<3<5<\ldots<2\lfloor\frac{n-1}{2}\rfloor +1$ and 
$2<4<6<\ldots<2\lfloor\frac{n}{2}\rfloor$ and $k<k+3$ for all $k=1, \ldots, n-3$. The incomparable pairs thus form the set $E=\{\{i,i+1\}: i=1, \ldots, n-1\}\}$. 
The intersection graph of $E$ is a path of length $n-1$, which yields a ``fence'' when orienting its edges in a zig-zag fashion. This fence and its 
opposite yield the mutually opposite ordered sets of supremum-irreducibles for the (distributive) ``Fibonacci'' lattice and its opposite. Recall that 
the \emph{opposite} $(R, \leq)^{op}=(R, \leq^{op})$ of an ordered set $(R, \leq)$ is defined by switching $\leq$ to $\geq$, that is: $x\leq^{op} y$ if and only if $y\leq x$.

Similarly, to the fact that hyperplanes, carriers, and open halfspaces of a ranking COM are also ranking COMs, the class of ranking COMs is also closed 
with respect to contractions. On the other hand deleting an element in a ranking COM may give a COM which is not a ranking COM.

To give a small example, consider the minor $\mathcal{R}(P,\leq)\setminus\{5,6\}$ of the lopsided system $\mathcal{R}(P,\leq)$ of Figure~\ref{fig:Fig4}. 
Suppose by way of contradiction, that $\mathcal{R}(P,\leq)\setminus\{5,6\}$ could be represented by some $\mathcal{R}(Q,\leq)$. The ordered set $(Q,\leq)$ 
must be of width $2$ since the tope graph of $\mathcal{R}(Q,\leq)$ is obtained from the graph in Figure~\ref{fig:distlatt} by contracting the five edges 
labeled $56$ and thus includes no $3$-cube. We can keep the current labeling without loss of generality to the point that $Q$ must include four antichains 
$\{1,2\}$, $\{2,3\}$, $\{3,4\}$, $\{3,6\}$ of size $2$ with exactly this intersection pattern. But then the fifth antichain must be disjoint from $\{1,2\}$ 
and $\{2,3\}$ but intersecting both $\{3,4\}$ and $\{3,6\}$, whence it must be $\{4,6\}$, which however yields a contradiction as $\{3,4,6\}$ cannot be an 
antichain in $(Q,\leq)$. Furthermore, $\mathcal{R}(P,\leq)\setminus\{5,6\}$ is easily seen to be the COM amalgam of ranking COMs, i.e., the class is also 
not closed under COM amalgamations.

It would be interesting to determine the smallest minor-closed class of COMs containing the ranking COMs.

\section{COMs as complexes of oriented matroids}\label{sec:complexes}

In this section we consider a topological approach to COMs. In the subsequent definitions, notations, and results we closely follow Section 4 
of \cite{bjvestwhzi-93} (some missing definitions can be also found there). Let $B^d=\{ x\in {\mathbb R}^d: ||x||\le 1\}$ be the {\it standard $d$-ball} 
and its boundary $S^{d-1}=\partial B^d=\{ x\in {\mathbb R}^d: ||x||=1\}$ be the {\it standard $(d-1)$-sphere}. When saying that a topological space $T$ is a 
``ball'' or a ``sphere'', it is meant that $T$ is homeomorphic to  $B^d$ or $S^{d-1}$ for some $d$, respectively.

\subsection{Regular cell complexes}

A {\it (regular) cell complex} $\Delta$ constitutes of a covering of a Hausdorff space $||\Delta||=\bigcup_{\sigma\in \Delta} \sigma$ with finitely many 
subspaces $\sigma$ homeomorphic with (open) balls such that  (i) the interiors of
$\sigma\in \Delta$ partition $||\Delta||$ (i.e., every $x\in ||\Delta||$ lies in the interior of a single $\sigma \in \Delta$), (ii) the boundary $\partial \sigma$ of each
ball $\sigma \in \Delta$ is a union of some members of $\Delta$ \cite[Definition 4.7.4]{bjvestwhzi-93}. Additionally, we will assume that $\Delta$ obeys the {\it intersection property}
(iii) whenever $\sigma,\tau\in \Delta$ have non-empty intersection then $\sigma\cap \tau\in \Delta$. The balls $\sigma\in \Delta$ are called {\it cells} of $\Delta$ and the space $||\Delta||$
is called the {\it underlying space} of $\Delta$. If $T$ is homeomorphic to $||\Delta||$ (notation $T\cong ||\Delta||$), then $\Delta$ is said to provide a 
{\it regular cell decomposition} of the space $T$. We will say that a regular cell complex $\Delta$ is {\it contractible} if the topological space $||\Delta||$ is contractible. 
If $\sigma,\tau\in \Delta$ and $\tau\subseteq \sigma$, then  $\tau$ is said to be a {\it face} of $\sigma$. $\Delta'\subseteq \Delta$ is a {\it subcomplex} 
of $\Delta$ if $\tau\in \Delta'$ implies that every face of $\tau$ also belongs to $\Delta'$.  The 0-cells and 1-cells of $\Delta$ are called {\it vertices} and 
{\it edges}. The {\it 1-skeleton} of $\Delta$ is encoded by the  graph $G(\Delta)$ consisting  of the vertices of $\Delta$ and graph edges corresponding to the 
edges of $\Delta$. The set of cells of $\Delta$ ordered by containment is denoted by ${\mathcal F}(\Delta)$ (in \cite{DaJaSc}, ${\mathcal F}(\Delta)$ is also 
called an {\it abstract cell complex}). Two cell complexes $\Delta$ and $\Delta'$ are {\it combinatorially equivalent} if their ordered sets ${\mathcal F}(\Delta)$ 
and ${\mathcal F}(\Delta')$ are isomorphic. We continue by recalling several results relating regular cell complexes.

The {\it order complex} of a finite ordered set $P$ is an abstract simplicial complex $\Delta_{ord}(P)$  whose vertices are the elements of  $P$ and whose simplices are the
chains $x_0<x_1<\cdots<x_k$ of $P$. The {\it geometric realization} $||\Delta||$ of a complex $\Delta$ basically consists of simultaneously replacing all abstract simplices 
by geometric simplices, see~\cite{BrHa} for a formal definition. In particular, $||\Delta_{ord}(P)||$ is the geometric realization of $\Delta_{ord}(P)$. For an element 
$x$ of $P$ let $P_{<x}=\{ y\in P: y<x\}$ and $P_{\le x}=P_{<x}\cup \{ x\}$. The following fact expresses that a regular cell complex is homeomorphic to the order complex 
of its ordered set of faces.

\begin{proposition}\cite[Proposition 4.7.8]{bjvestwhzi-93} \label{prop:homeo} Let $\Delta$ be a regular cell complex. Then $||\Delta||\cong ||\Delta_{ord}({\mathcal F}(\Delta)||.$ 
Moreover, this homeomorphism can be chosen to be \emph{cellular}, i.e., it restricts to a homeomorphism between $\sigma$ and $||\Delta_{ord}({\mathcal F}_{\le \sigma})||,$ for all 
$\sigma \in \Delta$.
\end{proposition}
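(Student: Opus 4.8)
The plan is to build the homeomorphism by induction on the skeleta of $\Delta$, upgrading a cellular homeomorphism one dimension at a time. Write $P={\mathcal F}(\Delta)$ for the face poset, and for $n\ge 0$ let $\Delta^{(n)}$ be the $n$-skeleton, the subcomplex of all cells of dimension $\le n$; then ${\mathcal F}(\Delta^{(n)})$ is the order ideal $P^{(n)}:=\{\sigma\in P:\dim\sigma\le n\}$ of $P$. I would prove by induction on $n$ the following: there is a homeomorphism $h_n\colon ||\Delta_{ord}(P^{(n)})||\to ||\Delta^{(n)}||$ which extends $h_{n-1}$ (when $n\ge 1$) and which, for every $\sigma\in\Delta^{(n)}$, restricts to a homeomorphism $||\Delta_{ord}({\mathcal F}_{\le\sigma})||\to\sigma$. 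Since $\Delta$ is finite, taking $n=\dim\Delta$ gives both assertions of the proposition, and the face-compatibility built into the induction is exactly the cellularity claim.

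\textbf{Base case and inductive step.} The case $n=0$ is trivial: $P^{(0)}$ is an antichain, so $||\Delta_{ord}(P^{(0)})||$ is just the set of $0$-cells of $\Delta$, and $h_0$ sends each vertex to the corresponding $0$-cell. For the inductive step I would combine two facts. The \emph{combinatorial} one: ${\mathcal F}_{\le\sigma}$ is an order ideal of $P$ with top element $\sigma$, hence $\Delta_{ord}({\mathcal F}_{\le\sigma})$ is the simplicial cone $\{\sigma\}\ast\Delta_{ord}({\mathcal F}_{<\sigma})$, so $||\Delta_{ord}({\mathcal F}_{\le\sigma})||$ is the topological cone over $||\Delta_{ord}({\mathcal F}_{<\sigma})||$ with apex $\sigma$. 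The \emph{topological} one (regularity of $\Delta$): for each $n$-cell $\sigma$ the pair $(\sigma,\partial\sigma)$ is homeomorphic to $(B^n,S^{n-1})$, and $B^n$ is canonically the cone $C(S^{n-1})$. Given $h_{n-1}$, for each $n$-cell $\sigma$ its restriction yields a homeomorphism $g_\sigma\colon ||\Delta_{ord}({\mathcal F}_{<\sigma})||\to\partial\sigma$. Composing $g_\sigma$ with a homeomorphism $(\sigma,\partial\sigma)\cong(B^n,S^{n-1})$ makes it a homeomorphism of its domain onto $S^{n-1}$; applying the cone construction (which turns a homeomorphism $Y\to S^{n-1}$ into a homeomorphism $C(Y)\to C(S^{n-1})=B^n$ extending it) and composing back produces a homeomorphism $\widehat g_\sigma\colon ||\Delta_{ord}({\mathcal F}_{\le\sigma})||\to\sigma$ extending $g_\sigma$. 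I then define $h_n$ to be $h_{n-1}$ on $||\Delta_{ord}(P^{(n-1)})||$ and $\widehat g_\sigma$ on each $||\Delta_{ord}({\mathcal F}_{\le\sigma})||$ with $\dim\sigma=n$.

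\textbf{Gluing.} It remains to check consistency. The overlap $||\Delta_{ord}({\mathcal F}_{\le\sigma})||\cap||\Delta_{ord}(P^{(n-1)})||$ equals $||\Delta_{ord}({\mathcal F}_{<\sigma})||$ (a chain in ${\mathcal F}_{\le\sigma}$ lies in $P^{(n-1)}$ iff it avoids the unique top cell $\sigma$), where $\widehat g_\sigma$ and $h_{n-1}$ agree by construction; and for distinct $n$-cells $\sigma\ne\sigma'$ any common face has dimension $<n$, so $||\Delta_{ord}({\mathcal F}_{\le\sigma})||\cap||\Delta_{ord}({\mathcal F}_{\le\sigma'})||\subseteq||\Delta_{ord}(P^{(n-1)})||$ and the two maps again agree there. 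By the pasting lemma over these finitely many closed subcomplexes, $h_n$ is a well-defined continuous bijection onto $||\Delta^{(n)}||$: bijectivity follows since the open cells $\mathring\sigma$ of $\Delta^{(n)}$ partition $||\Delta^{(n)}||$ and $h_n$ carries the open simplices of $||\Delta_{ord}({\mathcal F}_{\le\sigma})||$ not in $||\Delta_{ord}({\mathcal F}_{<\sigma})||$ bijectively onto $\mathring\sigma$ for $\dim\sigma=n$, while $h_{n-1}$ handles $||\Delta^{(n-1)}||$. As both spaces are compact Hausdorff, $h_n$ is a homeomorphism.

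\textbf{Main obstacle.} The crux — and the only place where the hypothesis of \emph{regularity} is genuinely used — is the extension of the boundary homeomorphism $g_\sigma$ over the cell $\sigma$: for a non-regular CW complex the attaching map of a cell need not be an embedding and no such extension exists, whereas here it works precisely because each closed cell $\sigma$ is literally a ball with $\partial\sigma$ its boundary sphere, so a homeomorphism onto the base of a cone extends by coning (the Alexander trick). Everything else — the cone identity for order complexes of posets with a maximum, the gluing bookkeeping above, and the upgrade of a continuous bijection between compacta to a homeomorphism — is routine.
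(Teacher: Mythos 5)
Your proof is correct, but note that the paper itself gives no proof of this proposition: it is quoted directly from \cite[Proposition 4.7.8]{bjvestwhzi-93}, so there is no in-paper argument to compare against. Your induction over skeleta, based on the cone identity $\Delta_{ord}({\mathcal F}_{\le\sigma})=\{\sigma\}\ast\Delta_{ord}({\mathcal F}_{<\sigma})$ together with the coning extension of the boundary homeomorphism over each closed cell (which is exactly where regularity is needed), is essentially the standard proof of the cited result, so nothing further is required.
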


The ordered sets of faces of regular cell complexes can be characterized in the following way:

\begin{proposition}\cite[Proposition 4.7.23]{bjvestwhzi-93} \label{prop:faceposet} Let $P$ be an ordered set. Then $P\cong{\mathcal F}(\Delta)$ 
for some regular cell complex $\Delta$ if and only if  $||\Delta_{ord}(P_{<x})||$ is homeomorphic to a sphere for all $x\in P$. Furthermore, $\Delta$ is 
uniquely determined by $P$ up to a cellular homeomorphism.
\end{proposition}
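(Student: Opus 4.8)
The plan is to prove the two implications of the equivalence separately and then extract the uniqueness clause from Proposition~\ref{prop:homeo}. For the \emph{necessity}, assume $P\cong\mathcal F(\Delta)$ and fix a cell $\sigma\in\Delta$, say of dimension $d$. Its boundary $\partial\sigma$ is itself a regular cell complex; since the cell interiors partition $||\Delta||$ and $\partial\sigma$ is closed, the cells contained in $\partial\sigma$ are exactly the proper faces of $\sigma$, so $\mathcal F(\partial\sigma)\cong P_{<\sigma}$. Applying Proposition~\ref{prop:homeo} to the regular cell complex $\partial\sigma$ gives $||\Delta_{ord}(P_{<\sigma})||=||\Delta_{ord}(\mathcal F(\partial\sigma))||\cong||\partial\sigma||\cong S^{d-1}$, a sphere (with the convention $||\Delta_{ord}(\emptyset)||=\emptyset=S^{-1}$ covering the vertex case). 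So the stated condition holds.

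For the \emph{sufficiency} I would exhibit an explicit realization inside $||\Delta_{ord}(P)||$. For each $x\in P$ put $\sigma_x:=||\Delta_{ord}(P_{\le x})||$, viewed as a subspace of $||\Delta_{ord}(P)||$. Every chain of $P_{\le x}$ either lies in $P_{<x}$ or has $x$ as its greatest element, so $\Delta_{ord}(P_{\le x})$ is the simplicial cone with apex $x$ over $\Delta_{ord}(P_{<x})$; consequently $\sigma_x$ is the topological cone over $||\Delta_{ord}(P_{<x})||$, which by hypothesis is a sphere $S^{d_x-1}$, and hence $\sigma_x$ is a ball $B^{d_x}$ with manifold boundary $\partial\sigma_x=||\Delta_{ord}(P_{<x})||=\bigcup_{y<x}\sigma_y$. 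Now set $\Delta:=\{\sigma_x:x\in P\}$. The $\sigma_x$ are finitely many closed balls covering the compact Hausdorff space $||\Delta_{ord}(P)||$; the relative interior of a simplex $\langle x_0<\cdots<x_k\rangle$ lies in $\sigma_x\setminus\bigcup_{y<x}\sigma_y$ for exactly one $x$, namely $x=x_k$, so the cell interiors partition the space; and $\partial\sigma_x$ is a union of members of $\Delta$ by the formula above. Finally $\sigma_y\subseteq\sigma_x\iff P_{\le y}\subseteq P_{\le x}\iff y\le x$, and distinct elements of $P$ yield cells with distinct interiors, so $\mathcal F(\Delta)\cong P$; along the way invariance of domain ensures that $\sigma_y\subseteq\partial\sigma_x$ forces $d_y<d_x$, so the dimensions behave correctly without any gradedness assumption on $P$.

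Uniqueness is then immediate: if $\Delta$ and $\Delta'$ both have face poset isomorphic to $P$, Proposition~\ref{prop:homeo} provides cellular homeomorphisms $||\Delta||\cong||\Delta_{ord}(P)||\cong||\Delta'||$ matching cells to cells compatibly with the face-poset isomorphism, and composing gives a cellular homeomorphism between $\Delta$ and $\Delta'$. The crux of the whole argument — and the only place the hypothesis is actually used — is the elementary fact that the cone over $S^{d-1}$ is $B^d$, which is exactly what promotes the purely combinatorial complex $\Delta_{ord}(P)$ into a genuine regular cell decomposition; everything else is bookkeeping with order complexes. One caveat worth flagging: the extra intersection property~(iii) imposed on regular cell complexes elsewhere in this paper is \emph{not} detected by the sphericity condition (the poset with two minimal and two maximal elements satisfies the hypothesis, yet its only realization, two arcs joining two points, violates~(iii)), so Proposition~\ref{prop:faceposet} should be read with ``regular cell complex'' meaning conditions (i)--(ii) only, as in~\cite{bjvestwhzi-93}; for the complexes built from COMs later, (iii) follows separately from the identity $F(V)\cap F(W)=F(V\circ W)$ for sign-consistent $V,W$.
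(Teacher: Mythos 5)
The paper gives no proof of this proposition --- it is quoted directly from \cite[Proposition 4.7.23]{bjvestwhzi-93} --- and your argument is correct and is essentially the standard proof of that cited result: necessity by applying Proposition~\ref{prop:homeo} to the boundary subcomplex of each cell, sufficiency by realizing $P$ through the balls $\|\Delta_{ord}(P_{\le x})\|$ inside $\|\Delta_{ord}(P)\|$ (with the cone-over-$S^{d-1}$-is-$B^d$ fact carrying the weight), and uniqueness from the cellular homeomorphisms of Proposition~\ref{prop:homeo}. Your closing caveat is also apt: the sphericity condition does not see the intersection property (iii) added to the paper's definition, which is consistent with the paper itself verifying (iii) for $\Delta(\covectors)$ separately via $F(X)\cap F(Y)=F(X\circ Y)$.
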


\subsection{Cell complexes of OMs}

Now, let ${\mathcal L}\subseteq \{ \pm1,0\}^E$ be the set of covectors of an oriented matroid. Then  $(\covectors,\le)$ is a semilattice with least element 
$\mathbf{0}$ (where $\le$ is the product ordering on $\{ \pm 1,0\}^E$ defined above).
The semilattice $(\covectors \cup \{ \hat{1}\},\le)$, i.e., the semilattice $\covectors$ with a largest element $\hat{1}$ adjoined, is a lattice, called 
the {\it big face lattice} of $\covectors$ and denoted by ${\mathcal F}_{big}(\covectors)$. Let ${\mathcal F}_{big}(\covectors)^{op}$ denote the opposite 
of  ${\mathcal F}_{big}(\covectors)$.

\begin{proposition}\cite[Corollary 4.3.4 \& Lemma 4.4.1]{bjvestwhzi-93} \label{prop:sphere} Let $(E,\covectors)$ be an oriented matroid of rank $r$. 
Then ${\mathcal F}_{big}(\covectors)^{op}$ is isomorphic to the face lattice of a PL (Piecewise Linear) regular cell decomposition of the $(r-1)$-sphere, 
denoted by $\Delta(\covectors)$. The tope graph of $\covectors$ encodes the 1-skeleton of $\Delta(\covectors)$.
\end{proposition}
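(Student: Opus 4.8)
The statement is the combinatorial kernel of the topological representation theorem, and the route I would follow is the one underlying \cite[Section 4.3]{bjvestwhzi-93}: first identify $||\Delta_{ord}||$ of the proper part of ${\mathcal F}_{big}(\covectors)$ as a PL sphere via shellability, then realize ${\mathcal F}_{big}(\covectors)^{op}$ as a cell complex via the face-poset criterion, and finally read off the $1$-skeleton combinatorially. Since the substantive ingredients are cited, I only describe the assembly.

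For the first part one uses that the big face lattice of an oriented matroid of rank $r$ is shellable --- it admits a recursive coatom ordering, a lexicographic shelling driven by a generic tope (\cite[Section 4.3]{bjvestwhzi-93}) --- so the order complex $\Delta_{ord}$ of its proper part $\covectors\setminus\{{\bf 0}\}$ is a shellable $(r-1)$-dimensional complex. Since ${\mathcal F}_{big}(\covectors)$ is moreover graded of length $r+1$ and satisfies the diamond property (every length-two interval has exactly two interior elements), $\Delta_{ord}$ is a pseudomanifold without boundary; a shellable pseudomanifold without boundary is a sphere, and a shellable sphere is PL. Hence $||\Delta_{ord}(\covectors\setminus\{{\bf 0}\})||$ is a PL $(r-1)$-sphere, and the same applies to the proper part of the covector poset of any oriented matroid of smaller rank.

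Next I would apply Proposition~\ref{prop:faceposet} to the poset $P:=(\covectors\setminus\{{\bf 0}\})^{op}$. For $x\in P$ corresponding to a covector $Y\ne{\bf 0}$, the lower set $P_{<x}$ is order-isomorphic (after forgetting orientation, which does not affect the order complex) to the proper part of the covector poset of the face oriented matroid $(E\setminus\underline Y,F(Y)\setminus\underline Y)$ --- an OM by Lemma~\ref{lem:fiber}, of rank $r(Y)<r$ --- so $||\Delta_{ord}(P_{<x})||$ is a sphere by the previous paragraph. Thus Proposition~\ref{prop:faceposet} produces a regular cell complex $\Delta(\covectors)$, unique up to cellular homeomorphism, with ${\mathcal F}(\Delta(\covectors))\cong P$; adjoining a bottom (empty) and a top element to $P$ gives ${\mathcal F}_{big}(\covectors)^{op}$ as its face lattice. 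By Proposition~\ref{prop:homeo}, $||\Delta(\covectors)||\cong||\Delta_{ord}(P)||=||\Delta_{ord}(\covectors\setminus\{{\bf 0}\})||$, the PL $(r-1)$-sphere above; and since each closed cell of $\Delta(\covectors)$ is cellularly homeomorphic to the order complex of a shellable interval, i.e.\ to a PL ball, $\Delta(\covectors)$ may be chosen PL. (Alternatively one invokes the full Folkman--Lawrence theorem, \cite{fo-la-78}, \cite[Chapter 5]{bjvestwhzi-93}, with the PL refinement of \cite{ed-ma-82}: then $\covectors$ is realized by an essential pseudosphere arrangement on $S^{r-1}$ and $\Delta(\covectors)$ is the cell decomposition dual to that arrangement.)

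Finally, for the $1$-skeleton: the $0$-cells of $\Delta(\covectors)$ are the atoms of ${\mathcal F}_{big}(\covectors)^{op}$, that is, the coatoms of ${\mathcal F}_{big}(\covectors)$, that is, the maximal covectors --- precisely the topes. The $1$-cells are the rank-two elements of ${\mathcal F}_{big}(\covectors)^{op}$, i.e.\ the covectors $Y$ covered in $\covectors$ by exactly two topes; these are exactly the covectors with $\#Y^0=1$, and when $Y^0=\{f\}$ the two covering topes are the extensions of $Y$ by $+1$ and by $-1$ at $f$ (both of which lie in $\covectors$ because an OM satisfies (C) and (Sym), hence (FS)), differing only in coordinate $f$; conversely any two topes $T,T'$ with $\#S(T,T')=1$ arise this way by (SE). Hence the edges of $\Delta(\covectors)$ are exactly the pairs of topes at Hamming distance one, so $G(\Delta(\covectors))$ is the tope graph of $\covectors$. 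The only genuinely hard ingredient above is the shellability of the big face lattice (equivalently, in the geometric route, the full topological representation theorem together with its PL version); everything else is bookkeeping inside ${\mathcal F}_{big}(\covectors)$, the one point to state carefully being the direction of the order reversal and the distinction between the adjoined top $\hat1$ of ${\mathcal F}_{big}$ and the empty face of $\Delta(\covectors)$.
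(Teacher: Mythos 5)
This proposition is quoted verbatim from \cite[Corollary 4.3.4 \& Lemma 4.4.1]{bjvestwhzi-93} and the paper offers no proof of its own, while your sketch reconstructs precisely the cited argument: shellability of the big face lattice giving PL sphericity of the order complex of $\covectors\setminus\{\mathbf{0}\}$, the face-poset recognition criteria (Propositions~\ref{prop:homeo} and~\ref{prop:faceposet}) applied to $(\covectors\setminus\{\mathbf{0}\})^{op}$, and the cover-relation bookkeeping identifying $0$-cells with topes and $1$-cells with the covectors lying under exactly two topes. So the proposal is correct and takes essentially the same route as the source the paper relies on; the only minor caveat is that identifying the rank-two elements of ${\mathcal F}_{big}(\covectors)^{op}$ with covectors $Y$ having $\#Y^0=1$ (and tope adjacency with ``differ in one coordinate'') presupposes a simple oriented matroid, which is the setting in which the paper actually invokes the proposition.
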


\subsection{Cell complexes of COMs}
We collected all ingredients necessary to associate to each COM a regular cell complex.  Let ${\mathcal L}\subseteq \{ \pm1,0\}^E$ be the set of covectors of a COM. 
Analogously to oriented matroids, let ${\mathcal F}_{big}(\covectors):=(\covectors \cup \{ \hat{1}\},\le)$ denote the ordered set $\covectors$ with a top element 
$\hat{1}$ adjoined and call ${\mathcal F}_{big}(\covectors)$ the {\it big  face semilattice} of $\covectors$. Let ${\mathcal F}_{big}(\covectors)^{op}$ denote 
the opposite of ${\mathcal F}_{big}(\covectors)$. ${\mathcal F}_{big}(\covectors)^{op}$ is isomorphic to the semilattice comprising the empty set and the faces 
of $\covectors$ ordered by inclusion.  Recall that for any $X\in \covectors$, the deletion $(E\setminus\underline{X},F(X)\setminus \underline{X})$ corresponding 
to the face $F(X)$ is an oriented matroid, which we will denote  by $\covectors(X)$. Since $F(Y)\subseteq F(X)$ if and only if $Y\in F(X)$, the order ideal 
${\mathcal F}_{big}({\covectors})^{op}_{\le X}$ coincides with the interval  $[\hat{1},X]$ of ${\mathcal F}_{big}({\covectors})^{op}$ and is isomorphic to the 
opposite big face lattice  ${\mathcal F}_{big}({\covectors}(X))^{op}$ of $\covectors(X)$. By Proposition~\ref{prop:sphere}, if $r$ is the rank of $\covectors(X)$, 
then ${\mathcal F}_{big}(\covectors(X))^{op}$ is isomorphic to the face lattice of a PL cell decomposition $\Delta({\covectors}(X))$ of the $(r-1)$-sphere. Additionally, 
the tope graph of $\covectors(X)$ encodes the 1-skeleton of $\Delta({\covectors}(X))$. Denote by $\sigma({\covectors}(X))$ the open PL ball whose boundary is the 
$(r-1)$-sphere occurring in the definition of  $\Delta({\covectors}(X))$. We will call the cells of $\Delta({\covectors}(X))$ {\it faces} of $\sigma({\covectors}(X))$. 
The faces of $\Delta({\covectors}(X))$ correspond to the elements of ${\covectors}(X)\cup\{\hat{1}\}$. Notice in particular that
the adjoined element $\hat{1}$ corresponds to the empty face in $\Delta({\covectors}(X))$ and $\mathbf{0}\in F(X)\setminus \underline{X}$ corresponds to the unique 
maximal face $\sigma({\covectors}(X))$.

By Proposition~\ref{prop:homeo}, for any $X\in \covectors$ we have $||\Delta({\covectors}(X))||\cong ||\Delta_{ord}({\mathcal F}_{big}({\covectors}(X))^{op})||$. 
Furthermore, since ${\mathcal F}_{big}({\covectors}(X))^{op}$ is isomorphic to 
${\mathcal F}_{big}({\covectors})^{op}_{\le X}$, $||\Delta_{ord}({\mathcal F}_{big}({\covectors}(X))^{op})||\cong||\Delta_{ord}({\mathcal F}_{big}({\covectors})^{op}_{\le X})||$. 
Thus for each $X\in \covectors$, $||\Delta_{ord}({\mathcal F}_{big}({\covectors})^{op}_{<X})||$ is homeomorphic to $||\Delta({\covectors}(X))\setminus\sigma(\covectors(X))||$, 
which is a sphere by Proposition~\ref{prop:sphere}. Now, by Proposition~\ref{prop:faceposet},  ${\mathcal F}_{big}(\covectors)^{op}$ is the face semilattice of a regular 
cell complex $\Delta(\covectors)$. Moreover, from the proof of Proposition~\ref{prop:faceposet} it follows that  $\Delta(\covectors)$ can be chosen so that its cells 
are the balls $\sigma({\covectors}(X)), X\in \covectors,$ whose boundary spheres are decomposed by $\Delta({\covectors}(X))$. Since $F(X)\cap F(Y)=F(X\circ Y)$ for 
any two covectors  $X,Y\in \covectors$ such that $F(X)$ and $F(Y)$ intersect, ${\mathcal F}_{big}({\covectors}(X\circ Y))^{op}$ is isomorphic to a sublattice of 
${\mathcal F}_{big}({\covectors}(X))^{op}$ and to a sublattice of
${\mathcal F}_{big}({\covectors}(Y))^{op}$.  Therefore the cells $\Delta({\covectors}(X))$ and $\Delta({\covectors}(Y))$ are glued in $\Delta(\covectors)$ along 
$\Delta({\covectors}(X\circ Y))$, whence $\Delta(\covectors)$ also satisfies the intersection property (iii). Notice also that since the 1-skeleton of each 
$\Delta({\covectors}(X))$ yields the tope graph of $\covectors (X)$ and $\Delta(\covectors)$ satisfies (iii), the 1-skeleton of $\Delta(\covectors)$ encodes 
the tope graph of $\covectors$. We summarize this in the following proposition, in which we also establish that $\Delta(\covectors)$ is contractible.

\begin{proposition} \label{complex} If $(E,\covectors)$ is a COM, then $\Delta(\covectors)$ is a contractible regular cell complex and the tope graph 
of $\covectors$ is realized by the 1-skeleton of $\Delta(\covectors)$.
\end{proposition}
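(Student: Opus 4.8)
The plan is to observe that almost everything in the statement has already been established in the paragraphs preceding it: there $\Delta(\covectors)$ was exhibited, via Proposition~\ref{prop:faceposet} applied to ${\mathcal F}_{big}(\covectors)^{op}$, as a regular cell complex obeying the intersection property, whose cells are the balls $\sigma(\covectors(X))$ for $X\in\covectors$ and whose $1$-skeleton realizes the tope graph of $\covectors$. So the only thing left is to prove that $||\Delta(\covectors)||$ is contractible, and I would do this by induction on the number $\#\covectors$ of covectors, using the decomposition of a COM described in Section~\ref{sec:amalgam}.

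For the base of the induction, suppose $\covectors$ has a single improper cocircuit $W$, that is, a unique minimal covector. Then $W\le X$ for every $X\in\covectors$, so $X\in F(W)$, and hence every cell $\sigma(\covectors(X))$ is a face of the unique maximal cell $\sigma(\covectors(W))$; therefore $||\Delta(\covectors)||$ is the closure of $\sigma(\covectors(W))$, which by construction is a closed PL ball and in particular contractible. (After simplification, which by Proposition~\ref{prop:faceposet} changes neither $\#\covectors$ nor $\Delta(\covectors)$ up to homeomorphism, this is precisely the case of an oriented matroid, cf.\ Section~\ref{sec:amalgam}.)

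For the inductive step, suppose $\covectors$ has at least two improper cocircuits. As in the decomposition discussion of Section~\ref{sec:amalgam}, pick an element $e$ lying in the support of one improper cocircuit and in the zero set of another, oriented so that $\covectors':=\covectors^-_e$ and $\covectors'':=\overline{\covectors^+_e}$ satisfy $\covectors=\covectors'\cup\covectors''$ with $\covectors'\cap\covectors''=N^-(\covectors^0_e)$; by Proposition~\ref{carriers} all three systems are COMs, and each is a nonempty proper subset of $\covectors$, hence has strictly fewer covectors. The key point is that each of $\covectors'$, $\covectors''$ and $\covectors'\cap\covectors''$ is closed under the map $X\mapsto X\circ Z$ for arbitrary $Z\in\covectors$: for $\covectors^-_e$ because the $e$-coordinate stays $-1$, and for $\covectors''$ and $N^-(\covectors^0_e)$ because membership is witnessed by some $W\in\covectors^0_e$ with $W\le X\le X\circ Z$. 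Consequently, for $X$ in any of these subsystems the face $F(X)$ is the same whether formed inside $\covectors$ or inside the subsystem, so $\covectors(X)$ is the same oriented matroid and $\sigma(\covectors(X))$ the same ball; since the faces of $\sigma(\covectors(X))$ in $\Delta(\covectors)$ are exactly the cells $\sigma(\covectors(Y))$ with $Y\in F(X)$, it follows that $\Delta(\covectors')$, $\Delta(\covectors'')$ and $\Delta(\covectors'\cap\covectors'')$ are subcomplexes of $\Delta(\covectors)$ with $\Delta(\covectors')\cup\Delta(\covectors'')=\Delta(\covectors)$ and $\Delta(\covectors')\cap\Delta(\covectors'')=\Delta(\covectors'\cap\covectors'')$. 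By the inductive hypothesis each of the three is contractible, and the union of two contractible subcomplexes of a CW complex meeting in a contractible subcomplex is contractible (the subcomplex inclusions are cofibrations, so this follows from Mayer--Vietoris and Seifert--van Kampen together with the Hurewicz and Whitehead theorems). This completes the induction.

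The step I expect to be the main obstacle is the combinatorial bookkeeping just sketched: verifying that passing to a halfspace, a closed halfspace, or the negative carrier part of a hyperplane commutes with taking faces, so that the three associated cell complexes are genuine subcomplexes of $\Delta(\covectors)$ fitting together exactly as the covector sets $\covectors'$, $\covectors''$, $\covectors'\cap\covectors''$ do. This is precisely where the composition closures $\covectors'\circ\covectors\subseteq\covectors'$ and $\covectors''\circ\covectors\subseteq\covectors''$ noted in Section~\ref{sec:amalgam} are needed; once they are in place, the topological gluing is routine.
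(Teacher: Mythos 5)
Your proof is correct and follows essentially the same route as the paper: induction using the decomposition $\covectors=\covectors_e^-\cup\overline{\covectors_e^+}$ with intersection $N^-(\covectors_e^0)$ from Section~\ref{sec:amalgam}, together with a gluing argument for contractibility. The only cosmetic differences are that you induct on $\#\covectors$ rather than on the number of maximal cells, check the subcomplex compatibility by hand via composition-closure of $\covectors'$, $\covectors''$, and $\covectors'\cap\covectors''$ instead of invoking Proposition~\ref{amalgam}, and prove the gluing step directly (Mayer--Vietoris, van Kampen, Hurewicz, Whitehead) instead of citing Bj\"orner's gluing lemma as the paper does.
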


\begin{proof} We prove the contractibility of $\Delta(\covectors)$ by induction on the number of maximal cells of $\Delta(\covectors)$  by using the so-called 
gluing lemma \cite[Lemma 10.3]{Bj} and our decomposition procedure (Proposition~\ref{amalgam}) for COMs. By the gluing lemma, if $\Delta$ is a cell complex which 
is the union of two contractible cell complexes $\Delta'$ and $\Delta''$ such that their intersection $\Delta_0=\Delta'\cap \Delta''$ is contractible, then $\Delta$ 
is contractible.

If $\Delta(\covectors)$ consists of a single maximal cell $\sigma({\covectors}(X))$, then $(E,\covectors)$ is an OM and therefore is contractible. Otherwise, 
as shown above there exists an element $e\in E$ such that if we set $\covectors':=\covectors^-_e$ and $\covectors'':=\overline{\covectors^+_e}$,  then
$(E,\covectors)$ is the COM amalgam of the COMs $(E,\covectors')$ and $(E,\covectors'')$  along the COM $\covectors'\cap \covectors''=N^-(\covectors^0_e)$. 
By induction hypothesis, the cell complexes $\Delta(\covectors'), \Delta(\covectors''),$ and $\Delta(\covectors'\cap \covectors'')$ are contractible. Each maximal 
cell of $\Delta(\covectors)$ corresponds to a maximal face of $\covectors$, thus by Proposition~\ref{amalgam} it is a maximal cell of  $\Delta(\covectors')$,  
of $\Delta(\covectors''),$ or of both (in which case it belongs to  $\Delta(\covectors'\cap\covectors'')$).  Since each cell of  $\Delta(\covectors)$ belongs 
to a maximal cell, this implies that $\Delta(\covectors)\subseteq \Delta(\covectors')\cup \Delta(\covectors'')$. Since $\covectors'\cup \covectors''\subseteq \covectors$, 
we also have the converse inclusion $\Delta(\covectors')\cup \Delta(\covectors'')\subseteq \Delta(\covectors)$.  Finally, since $\covectors'\cap \covectors''=N^-(\covectors^0_e)$, 
the definition of carriers implies that $\Delta(\covectors'\cap \covectors'')=\Delta(\covectors')\cap \Delta(\covectors'')$.
\end{proof}

\subsection{Zonotopal COMs}
As in the introduction, let $E$ be a central arrangement of $n$ hyperplanes of ${\mathbb R}^d$ and $\mathcal L$ be  the oriented matroid corresponding 
to the regions of ${\mathbb R}^d$ defined by this arrangement. Let ${\bf X}=\{ {\bf x}_1,\ldots,{\bf x}_n\}$ be a set of unit vectors each normal to a 
different hyperplane of $E$.  The {\it zonotope} ${\mathcal Z}:={\mathcal Z}({\bf X})$ of ${\bf X}$ is the convex polytope of ${\mathbb R}^d$ which can 
be expressed as the Minkowski sum of $n$ line segments
$${\mathcal Z}=[-{\bf x}_1,{\bf x}_1]+[-{\bf x}_2,{\bf x}_2]+\ldots+[-{\bf x}_n,{\bf x}_n].$$
Equivalently, ${\mathcal Z}$ is the projection of the  $n$-cube $C_n:=\{ \sum_{i=1}^n \lambda_i{\bf e}_i: -1\le \lambda_i\le +1\}\subset {\mathbb R}^n$ 
under $\bf X$ (where ${\bf e}_1,\ldots,{\bf e}_n$ denotes the standard basis of ${\mathbb R}^n$), which sends ${\bf e}_i$ to ${\bf x}_i$, $i=1,\ldots,n$:
$${\mathcal Z}=\{ \sum_{i=1}^n \lambda_i{\bf x}_i: -1\le \lambda_i\le +1\}\subset {\mathbb R}^d.$$
The hyperplane arrangement $E$ is \emph{geometrically polar} to  ${\mathcal Z}$: the regions of the arrangement are the cones of outer normals at the 
faces of  ${\mathcal Z}$. The face lattice of  ${\mathcal Z}$ is opposite (anti-isomorphic) to the big face lattice of the oriented matroid $\mathcal L$ 
of ${\bf X}$, that is, ${\mathcal F}({\mathcal Z})\simeq {\mathcal F}_{big}(\covectors)^{op}$; for this and other results, see \cite[Subsection 2.2]{bjvestwhzi-93}. 
Therefore the zonotopes together with their faces can be viewed as the cell complexes associated to realizable oriented matroids.

The following properties and examples of zonotopes are well-known:
\begin{itemize}
\item any face of a zonotope is a zonotope;
\item a polytope $P$ is a zonotope if and only if every 2-dimensional face of $P$ is a zonotope and if and only if every 2-dimensional face of $P$  is centrally symmetric;
\item two zonotopes are combinatorially equivalent if and only if their 1-skeletons yield isomorphic graphs;
\item the $d$-cube is the zonotope corresponding to the arrangement of coordinate hyperplanes (called also {\it Boolean arrangements} \cite{DaJaSc}) in ${\mathbb R}^d$;
\item the permutohedron is  the zonotope corresponding to the braid arrangement in ${\mathbb R}^d$.
\end{itemize}

A regular cell complex $\Delta$ is a (combinatorial) {\it zonotopal complex} if each cell of $\Delta$ is combinatorially equivalent
to a zonotope \cite{DaJaSc}. Analogously, $\Delta$ is a {\it cube complex} if each of its cells is a combinatorial cube. A {\it geometric zonotopal} or {\it cube} {\it complex}
is a zonotopal (respectively, cube) complex $\Delta$ with a metric such that each face is isometric to a zonotope (respectively, a cube) of the Euclidean space. Moreover,
faces are glued together by isometry along their maximal
common subfaces. The cell complex $\Delta(\covectors)$ associated to a lopsided set $(E,\covectors)$ is a geometric cube complex: $\Delta(\covectors)$ is the union of all
subcubes of the cube $[-1,+1]^E$ whose barycenters are sign vectors from $\covectors$ \cite{bachdrko-12}.

A COM $(E,\covectors)$ is called {\it locally realizable} (or {\it zonotopal}) if  $\covectors(X)$ is a realizable oriented matroid for any $X\in \covectors$. 
Then $\Delta(\covectors)$ is a zonotopal complex because each cell $\Delta(\covectors(X)), X\in \covectors$, is combinatorially equivalent to a zonotope. 
A zonotopal COM $(E,\covectors)$ is called {\it zonotopally realizable } if $\Delta(\covectors)$ is a geometric zonotopal complex. Clearly, zonotopally realizable 
COMs are  locally realizable. The converse is the content of the following question:

\begin{question}\label{quest:1} Is  any  locally realizable COM zonotopally realizable?
\end{question}

\begin{proposition}\label{prop:realtozono} If $\covectors$ is a realizable COM, then $\covectors$ is zonotopally realizable (and thus locally realizable). 
In particular, each ranking COM is zonotopally realizable.
\end{proposition}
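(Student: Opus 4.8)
The plan is to start from the defining data of a realizable COM, namely a finite arrangement $E$ of affine hyperplanes in some $\mathbb{R}^d$ together with an open convex set $C$ meeting every hyperplane, and to produce from it a geometric zonotopal complex whose cells carry exactly the covectors of $\covectors=\covectors(E,C)$. First I would recall, as in the last subsection, that the affine arrangement $E$ in $\mathbb{R}^d$ can be coned up: embed $\mathbb{R}^d$ as an affine hyperplane $\{x_{d+1}=1\}\subset\mathbb{R}^{d+1}$, replace each affine hyperplane by the linear hyperplane in $\mathbb{R}^{d+1}$ spanned by it and the origin, and add if necessary the coordinate hyperplane $\{x_{d+1}=0\}$. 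This yields a central arrangement $\widehat E$ whose oriented matroid $\widehat\covectors$ contains $\covectors$ as (a minor of) a halfspace, in the manner already described in the excerpt. The zonotope $\mathcal Z(\widehat{\mathbf X})$ polar to $\widehat E$ is then a genuine geometric zonotope realizing the OM $\widehat\covectors$, with face lattice anti-isomorphic to ${\mathcal F}_{big}(\widehat\covectors)$.

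The key step is to identify, inside this single zonotope $\mathcal Z$, the subcomplex corresponding to $\covectors(E,C)$. Because $C$ is an open convex subset of $\mathbb{R}^d$, the covectors retained in $\covectors(E,C)$ are precisely those $X\in\widehat\covectors$ (restricted appropriately) whose relatively open cell in $\mathbb{R}^d$ meets $C$; equivalently, under the polarity between the arrangement and the faces of $\mathcal Z$, they correspond to a down-closed-under-the-face-order family of faces of $\mathcal Z$ — indeed the family of all faces $\sigma$ of $\mathcal Z$ whose outer-normal cone meets $C$ (after intersecting with the hyperplane $\{x_{d+1}=1\}$). One checks that this family is closed under taking subfaces, so it constitutes a subcomplex $\Delta_0$ of the face complex of $\mathcal Z$; since each face of a zonotope is itself a zonotope and the faces of $\mathcal Z$ are glued by isometry along common subfaces, $\Delta_0$ is automatically a geometric zonotopal complex. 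It remains to match $\Delta_0$ with the abstractly defined complex $\Delta(\covectors)$ of Proposition~\ref{complex}: for each $X\in\covectors$ the face $F(X)$ of the COM is, by Lemma~\ref{lem:fiber}, an OM, and its realization is exactly the corresponding face of $\mathcal Z$ — a realizable OM and hence a zonotope — so the cells agree and the gluing data agree. This shows $\covectors$ is zonotopally realizable, hence locally realizable. The final sentence then follows because ranking COMs were shown (in Section~\ref{sec:ranking}, via the braid arrangement restricted to the order polytope) to be realizable.

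The main obstacle I anticipate is the bookkeeping in the coning step: passing from the affine picture in $\mathbb{R}^d$ to the central picture in $\mathbb{R}^{d+1}$ changes which covector coordinates are present (the extra hyperplane $\{x_{d+1}=0\}$, and the possibility that $C$ fails to meet some coned hyperplane even though it meets the original affine one), so one must be careful that the minor of the halfspace of $\widehat\covectors$ that one extracts really is $\covectors(E,C)$ on the original ground set $E$, and that the corresponding subcomplex of $\mathcal Z$ is cut out cleanly rather than only up to combinatorial equivalence. A clean way around this is to argue directly in $\mathbb{R}^d$ with a possibly unbounded polyhedral picture: restrict the (affine) zonotopal tiling dual to $E$ to the portion visible from $C$, or equivalently observe that the collection of normal cones meeting the convex set $C$ is closed under faces purely by convexity of $C$ — no coning required — and that each such normal cone is the normal cone of a zonotope face. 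I would present the coning version as the conceptual route and note the direct version as the one that avoids the delicate minor identification.
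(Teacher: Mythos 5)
Your argument is correct and takes essentially the same route as the paper: lift the affine arrangement to a central one, realize the resulting OM by its polar zonotope, and identify $\Delta(\covectors)$ with the subcomplex of zonotope faces whose covectors lie in $\covectors$ (equivalently, whose normal cones meet $C$), the openness of $C$ guaranteeing both that this family is closed under subfaces and that each COM face coincides with the full zonotope face. The only difference is cosmetic: the paper first replaces $C$ by the interior of a polyhedron $P$ and adds the supporting hyperplanes of $P$ to the arrangement before lifting, which makes the subcomplex identification immediate and sidesteps the bookkeeping you flag.
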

\begin{proof}
Since $\covectors$ is realizable there is a set of oriented affine hyperplanes of $\mathbb{R}^d$ and an open convex set $C$, such that $\covectors=\covectors(E,C)$. 
Without loss of generality we can assume that $C$ is the interior of a full-dimensional polyhedron $P$. Let $F$ be the set of supporting hyperplanes of $P$. 
Consider the central hyperplane arrangement $A$ resulting from lifting the affine arrangement $E\cup F$ to $\mathbb{R}^{d+1}$. The associated OM $\covectors'$ 
is realizable and therefore zonotopally realizable. Since $\Delta(\covectors)$ is a subcomplex of $\Delta(\covectors')$,  also $\covectors$ is zonotopally realizable.
\end{proof}

\subsection{CAT(0) Coxeter COMs}

We conclude this section by presenting another class of zonotopally realizable COMs. Namely, we prove that the
CAT(0) Coxeter (zonotopal) complexes introduced in \cite{hapa-98}  arise from COMs. They represent a common
generalization of benzenoid systems (used for illustration in Section~\ref{sec:cocircuit}), 2-dimensional cell
complexes obtained from bipartite cellular graphs \cite{bach-96}, and CAT(0) cube complexes (cube complexes
arising from median structures) \cite{BaCh_survey}. One can say that CAT(0) zonotopal complexes generalize
CAT(0) cube complexes in the same way as COMs generalize lopsided sets.

A zonotope $\mathcal Z$ is called a {\it Coxeter zonotope} (an {\it even polyhedron} \cite{hapa-98}
or a {\it Coxeter cell} \cite{Davis})  if $\mathcal Z$ is symmetric with respect to the mid-hyperplane $H_f$ of each edge $f$ of
$\mathcal Z$, i.e., to the hyperplane perpendicular to $f$ and passing via the middle of $f$.
A cell complex $\Delta$ is called a {\it Coxeter complex} if $\Delta$ is a geometric
zonotopal complex in which each cell is isometric to a Coxeter zonotope. Throughout
this subsection, by $\Delta$ we denote a Coxeter complex and by  $||\Delta||$ the underlying metric
space of $\Delta$.

If $\mathcal Z$ is a Coxeter zonotope and $f,f'$ are two parallel edges of $\mathcal Z$,
then one can easily see that the mid-hyperplanes $H_f$ and $H_{f'}$ coincide. If
${\mathcal Z}=[-{\bf x}_1,{\bf x}_1]+[-{\bf x}_2,{\bf x}_2]+\ldots+[-{\bf x}_n,{\bf x}_n],$
denote by $H_i$ the mid-hyperplane to all edges of $\mathcal Z$ parallel to the segment
$[-{\bf x}_i,{\bf x}_i]$, $i=1,\ldots,n$. Then $\mathcal Z$ is the zonotope of the regions
defined by the arrangement $\{ H_1,\ldots,H_n\}$.
It is well-known \cite[Definition 7.3.1]{Davis} (and is also noticed  in \cite[p.184]{hapa-98}) that Coxeter zonotopes
are exactly the zonotopes
associated to {\it reflection  arrangements} (called also {\it Coxeter arrangements}) of hyperplanes,
i.e., to arrangements of hyperplanes of a finite reflection group \cite[Subsection 2.3]{bjvestwhzi-93}.
For each $i=1,\ldots,n$, denote by ${\mathcal Z}_i$ the intersection of ${\mathcal Z}$ with the
hyperplane $H_i$ and call it a {\it mid-section of $\mathcal Z$}. The mid-sections ${\mathcal Z}_i$
of a Coxeter zonotope $\mathcal Z$ of dimension $d$ are Coxeter zonotopes of dimension~$d-1$.

We continue with the definition of CAT(0) metric spaces and CAT(0) Coxeter complexes.
The underlying space (polyhedron) $||\Delta||$
of a geometric zonotopal  complex  (and, more generally, of a cell
complex with Euclidean convex polytopes as cells) $\Delta$ can be endowed
with an intrinsic $l_2$-metric in the following way.  Assume that
inside every maximal face
of $||\Delta||$ the distance is measured by the $l_2$-metric.
The {\it intrinsic} $l_2$-{\it metric} $d_2$ of $||\Delta||$ is defined by letting
the distance between two points $x,y\in ||\Delta||$ be equal to
the greatest lower bound on the length of the paths joining them;
here a {\it path} in $||\Delta||$ from $x$ to $y$ is a sequence
$x=x_0,x_1,\ldots, x_m=y$ of points in $||\Delta||$ such that
for each $i=0,\ldots, m-1$ there exists a face $\sigma_i$ containing
$x_i$ and $x_{i+1},$ and the {\it length} of the path equals
$\sum_{i=0}^{m-1} d(x_i,x_{i+1}),$ where $d(x_i,x_{i+1})$ is
computed inside $\sigma_i$ according to the respective $l_2$-metric.  The
resulting metric space is {\it geodesic}, i.e., every pair of points
in $||\Delta||$ can be joined by a geodesic; see \cite{BrHa}.

A {\it geodesic triangle} $T:=T(x_1,x_2,x_3)$ in a geodesic
metric space $(X,d)$ consists of three points in $X$ (the vertices
of $T$) and a geodesic  between each pair of vertices (the
edges of $T$). A {\it comparison triangle} for $T(x_1,x_2,x_3)$
is a triangle $T(x'_1,x'_2,x'_3)$ in the
Euclidean plane  ${\mathbb R}^2$ such that $d_{{\mathbb
R}^2}(x'_i,x'_j)=d(x_i,x_j)$  for $i,j\in \{ 1,2,3\}.$ A geodesic
metric space $(X,d)$ is a {\it CAT(0) space}
\cite{Gr} if all geodesic triangles $T(x_1,x_2,x_3)$ of $X$
satisfy the comparison axiom of Cartan--Alexandrov--Toponogov:
{\it If $y$ is a point on the side of $T(x_1,x_2,x_3)$ with
vertices $x_1$ and $x_2$ and $y'$ is the unique point on the line
segment $[x'_1,x'_2]$ of the comparison triangle
$T(x'_1,x'_2,x'_3)$ such that $d_{{\mathbb R}^2}(x'_i,y')=
d(x_i,y)$ for $i=1,2,$ then $d(x_3,y)\le d_{{\mathbb
R}^2}(x'_3,y').$}

CAT(0) spaces can be characterized  in several different  natural ways
and have numerous  properties (for a full account of this theory consult
the book \cite{BrHa}). For instance, a cell complex endowed with a piecewise
Euclidean metric is CAT(0) if and only if any two points can be joined by a
unique geodesic. Moreover, CAT(0) spaces are
contractible.

A {\it CAT(0) Coxeter complex} is a Coxeter complex $\Delta$ for which $||\Delta||$
endowed with the intrinsic $l_2$-metric $d_2$ is a CAT(0) space. In this case, the
parallelism relation on edges of cells of $\Delta$ induces
a parallelism relation on all edges of $\Delta$:  two edges $f,f'$ of $\Delta$ are {\it parallel} if
there exists a sequence of edges
$f_0=f,f_1,\ldots, f_{k-1},f_k=f'$ such that any two consecutive edges $f_{i-1},f_i$ are parallel
edges of a common cell of $\Delta$. Parallelism is an equivalence relation on the edges of $\Delta$.
Denote by $E$ the equivalence classes of this parallelism relation. 
For $e\in E$, we denote by $\Delta_e$  the union of all mid-sections of the form  ${\mathcal Z}_e$ for cells
$\mathcal Z\in \Delta$ which contain edges from the equivalence class $e$ (let $||\Delta_e||$ be
the underlying space of $\Delta_e$).  We call
each $||\Delta_e||$ (or $\Delta_e$), $e\in E$, a {\it mid-hyperplane} (or a {\it wall} as in \cite{hapa-98}) of $||\Delta||$.
Since each mid-section  included in  $\Delta_e$ is a Coxeter zonotope, each mid-hyperplane of a Coxeter complex is a
Coxeter complex as well. CAT(0) Coxeter complexes have additional nice and strong properties, which  have
been established in \cite{hapa-98}.

\begin{lemma} \cite[Lemme 4.4]{hapa-98} \label{convex} Let $\Delta$ be a CAT(0) Coxeter complex and $\Delta_e$
be a mid-hyperplane of $\Delta$. Then $||\Delta_e||$ is a convex subset of $||\Delta||$  
and $||\Delta_e||$ partitions $||\Delta||$
in two connected components  $||\Delta_e^-||$ and $||\Delta_e^+||$ (called halfspaces of $||\Delta||$).
\end{lemma}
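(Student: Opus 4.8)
The plan is to deduce both assertions from the CAT(0) geometry of $\Delta$, reducing matters to a statement about the link of a point and an induction on $\dim\Delta$. For convexity the tool of choice is the CAT(0) version of the Tietze--Nakajima theorem: a closed, connected, locally convex subset of a CAT(0) space is convex (see \cite{BrHa}). So it suffices to verify that $\Delta_e$ is (a) closed, which is immediate from local finiteness of $\Delta$ and the fact that each mid-section $\mathcal Z_e$ is closed in the cell $\mathcal Z$; (b) connected, which follows from the definition of the parallelism class $e$ -- any two cells carrying an $e$-edge are joined by a chain of cells in which consecutive members share an $e$-parallel edge, and the corresponding mid-sections then share at least the midpoint of that edge, so $\bigcup_{\mathcal Z}\mathcal Z_e$ is path-connected; and (c) locally convex, which is the substantial point.

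To treat local convexity (and, along the way, local separation) fix $p\in\Delta_e$ and pass to the link $\mathrm{Lk}(p,\Delta)$, a CAT(1) complex whose cells are the links of the cells of $\Delta$ through $p$ (Gromov's link condition, \cite{BrHa}). Since mid-sections of Coxeter zonotopes are again Coxeter zonotopes, as recalled above, $\mathrm{Lk}(p,\Delta)$ is again a Coxeter complex and the trace of $\Delta_e$ in it is one of its mid-hyperplanes $\mathrm{Lk}(p,\Delta_e)$. I would then prove, by induction on $\dim\Delta$, the spherical counterpart of the lemma: a mid-hyperplane of a CAT(1) Coxeter complex is \emph{$\pi$-convex} (geodesics of length $<\pi$ joining two of its points stay inside it) and it separates the complex into exactly two components. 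The CAT(1) hypothesis -- closed geodesics have length at least $2\pi$ -- is precisely what forbids a third component and what makes the $\pi$-convexity argument close up. Transporting these two facts back through the link yields local convexity of $\Delta_e$ at $p$ (completing (c), hence convexity of $\Delta_e$) and local separation: every $p\in\Delta_e$ has a neighbourhood $U$ for which $U\setminus\Delta_e$ has exactly two components.

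It then remains to globalize the separation. First, $\Delta\setminus\Delta_e$ has at least two components: an edge $f$ of class $e$ is a straight segment inside some cell, hence a unique (local, thus global) geodesic of $\Delta$, and it meets $\Delta_e$ exactly at its midpoint; were its two endpoints in the same component of $\Delta\setminus\Delta_e$, concatenating a connecting path in the complement with $f$ would give a loop meeting $\Delta_e$ an odd number of times -- impossible, since local separation turns the mod-$2$ intersection number with $\Delta_e$ into a well-defined homomorphism $\pi_1(\Delta)\to\mathbb Z/2$ and $\Delta$ is simply connected (being contractible, as a CAT(0) space). Second, there are at most two components: by convexity of $\Delta_e$ the geodesic $[x,y]$ between $x,y\in\Delta\setminus\Delta_e$ meets $\Delta_e$ in a (possibly empty) subsegment, so an arbitrarily small push-off shows that $x$ and $y$ lie in the same component exactly when that subsegment is empty, a condition detected by the $\mathbb Z/2$ intersection number along any path from $x$ to $y$; the latter takes only two values, so $\Delta\setminus\Delta_e$ has exactly the two components $\Delta_e^+$ and $\Delta_e^-$.

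The main obstacle is the dimension induction of the second paragraph. One must (i) identify $\mathrm{Lk}(p,\Delta)$ with a bona fide CAT(1) Coxeter complex and the trace of $\Delta_e$ with one of its mid-hyperplanes -- this is exactly where the ``even''/Coxeter hypothesis on the cells is used, via the stability of Coxeter zonotopes under passing to mid-sections -- and (ii) carry out the spherical version of the argument ($\pi$-convexity and two-sidedness in a CAT(1) Coxeter complex), which is just the present lemma one dimension lower; so the Euclidean and spherical versions are proved simultaneously by induction on dimension, the base case $\dim\le 1$ being elementary. As a byproduct of convexity one obtains that $\Delta_e$, with its own intrinsic $l_2$-metric, is again a CAT(0) Coxeter complex, which is convenient for iterating the construction of walls but is not needed for the proof itself.
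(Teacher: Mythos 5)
The paper does not actually prove this lemma: it is imported verbatim from Haglund--Paulin \cite{hapa-98} (Lemme 4.4), so there is no internal argument to compare yours against, and I can only judge your proposal on its own terms. Your plan --- closedness and connectedness of $\Delta_e$ are easy, local convexity and local two-sidedness are read off from links, a local-to-global principle in CAT(0) spaces (closed, connected, locally convex, complete $\Rightarrow$ convex, which does hold for these finite complexes) upgrades to convexity, and separation is obtained from simple connectivity --- is a reasonable route and broadly in the spirit of how wall statements of this kind are established. But the two decisive steps are asserted rather than proved.

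First, the link induction, which is the mathematical heart of the lemma, is not carried out. The cells of $\mathrm{Lk}(p,\Delta)$ are links of faces of Coxeter zonotopes, i.e.\ spherical Coxeter cells, \emph{not} zonotopes; so the ``spherical version'' is not literally ``the present lemma one dimension lower'' but a different statement (about complexes of spherical Coxeter cells, walls cut out by mid-great-spheres, $\pi$-convexity and local two-sidedness under the CAT(1) hypothesis) that has to be formulated and proved, and the identification of the trace of $\Delta_e$ in $\mathrm{Lk}(p,\Delta)$ with such a wall is precisely where the evenness/Coxeter hypothesis must do its work; none of this is supplied. Second, the ``at most two components'' step is a non sequitur as written: assigning to each component a value in $\mathbb{Z}/2$ does not bound the number of components, and your bridging claim that $x,y\in\Delta\setminus\Delta_e$ lie in the same component exactly when the geodesic $[x,y]$ misses $\Delta_e$ is unproved in the forward direction --- it is essentially equivalent to the separation statement itself --- and moreover a geodesic can meet the convex wall in a nondegenerate subsegment without crossing it, so ``meets'' is not detected by the mod-$2$ crossing parity. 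The standard repair is to use local two-sidedness at \emph{every} point of $\Delta_e$ (which your link analysis is meant to deliver) together with connectedness of $\Delta_e$ to produce a two-sided collar of the wall, and then cancel the crossings of an arbitrary path in consecutive pairs by rerouting along the collar; this shows every point of the complement can be joined without crossing to one of the two endpoints of a fixed $e$-edge, giving at most two components, while your crossing-parity homomorphism $\pi_1(\Delta)\to\mathbb{Z}/2$ argument (legitimate only once local separation holds at every wall point, so that crossings cannot be slid off a ``free boundary'' of the wall) correctly gives at least two.
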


If $x\in ||\Delta^-_e||$ and $y\in ||\Delta^+_e||$, then $x$ and $y$ are said to be {\it separated} by the mid-hyperplane (wall) $||\Delta_e||$.
A path $P$ in $||\Delta||$ {\it traverses} a mid-hyperplane $||\Delta_e||$ if $P$ contains an edge $xy$ such that $x$ and $y$ are separated by $||\Delta_e||$.
Two distinct mid-hyperplanes $||\Delta_e||$ and $||\Delta_f||$ are called {\it parallel} if $||\Delta_e||\cap ||\Delta_f||=\varnothing$ and {\it crossing} if
$||\Delta_e||\cap ||\Delta_f||\ne\varnothing$.

\begin{lemma} \cite[Corollaire 4.10]{hapa-98} \label{onewall} Two vertices $u,v$ of $\Delta$ are adjacent in $G(\Delta)$ if and only if $u$ and $v$ are
separated by a single mid-hyperplane of $||\Delta||$.
\end{lemma}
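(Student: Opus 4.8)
The plan is to establish the two implications separately from the structural facts already available, in particular Lemma~\ref{convex}: every mid-hyperplane $\Delta_e$ is convex in the CAT(0) metric $d_2$ and cuts $\Delta$ into exactly two halfspaces $\Delta_e^{-},\Delta_e^{+}$. Two auxiliary observations will carry the proof. First, \textbf{no vertex of $\Delta$ lies on a mid-hyperplane}: inside any cell $\sigma$ the trace $\Delta_f\cap\sigma$ is the mid-section perpendicular to the $f$-edges of $\sigma$, namely the set of covectors of the oriented matroid of $\sigma$ that vanish at $f$, whereas a vertex of $\Delta$ is a tope of that (loop-free) oriented matroid and hence has full support. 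Second, \textbf{an edge $uv$ of $\Delta$ in the parallelism class $e$ meets $\Delta_f$ only if $f=e$, and then only at its midpoint $m$, where it crosses transversally}: inside a cell $\sigma\supseteq uv$ every covector lying on the closed segment $uv$ has zero set contained in $\{e\}$ (any covector below both endpoints is below their infimum, whose zero set is $\{e\}$), and the direction of $uv$ is normal to the reflection hyperplane $\Delta_e\cap\sigma$.

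For the forward implication, let $uv$ be an edge in class $e$. Its midpoint $m$ lies on $\Delta_e$, the two half-open arcs $[u,m)$ and $(m,v]$ are connected and avoid $\Delta_e$, hence each lies in one component $\Delta_e^{\pm}$; since $uv$ crosses $\Delta_e$ transversally at the single point $m$ and $\Delta_e$ separates $\Delta$ into precisely two pieces, $u$ and $v$ lie in different ones, so $\Delta_e$ separates them. If on the other hand $f\neq e$, then by the two observations the edge $uv$ does not meet $\Delta_f$ at all (not in its interior since $f\neq e$, and not at a vertex); as every path from $\Delta_f^{-}$ to $\Delta_f^{+}$ must meet $\Delta_f$, the wall $\Delta_f$ cannot separate $u$ from $v$. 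Thus exactly one mid-hyperplane separates $u$ and $v$.

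For the converse, suppose $\Delta_e$ is the only mid-hyperplane separating $u$ and $v$; then $u\neq v$, and we may fix a shortest path $u=w_0,w_1,\dots,w_k=v$ in $G(\Delta)$, $k\geq 1$, with $w_{i-1}w_i$ in class $e_i$. The crucial claim is that $e_1,\dots,e_k$ are pairwise distinct: each closed halfspace $\Delta_f^{+}\cup\Delta_f$ is CAT(0)-convex --- it is the union of the convex set $\Delta_f$ with one component of its complement, and a geodesic leaving it would re-enter through $\Delta_f^{-}$ and thus contain a sub-geodesic lying inside $\Delta_f$, impossible --- and this convexity forces a shortest path of $G(\Delta)$ to cross any mid-hyperplane at most once, for otherwise the sub-path between two crossings could be replaced by a strictly shorter detour staying on one side. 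Granting distinctness, each $\Delta_{e_i}$ is met by the path only at the midpoint of $w_{i-1}w_i$ (the remaining edges lie in other classes and are disjoint from $\Delta_{e_i}$ by the observations, and no intermediate vertex lies on it), so the side of $\Delta_{e_i}$ flips exactly once along the path, whence $\Delta_{e_i}$ separates $u=w_0$ from $v=w_k$. Hence the $k$ distinct walls $\Delta_{e_1},\dots,\Delta_{e_k}$ all separate $u$ and $v$; as $\Delta_e$ is the only such wall, $k=1$ and $u,v$ are adjacent.

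The main obstacle is exactly the distinctness of $e_1,\dots,e_k$, i.e.\ that a shortest path in $G(\Delta)$ crosses each wall at most once; this is the step where the passage from the CAT(0)-convexity of $\Delta_f$ (Lemma~\ref{convex}) to a purely combinatorial convexity property of the $1$-skeleton must be made, via a reflection/shortcut argument inside a halfspace. The remainder is bookkeeping with the cell structure. This statement, together with its proof, is~\cite[Corollaire 4.10]{hapa-98}, which we may cite as is.
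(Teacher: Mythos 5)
First, note that the paper does not prove this statement: Lemma~\ref{onewall} is imported verbatim from \cite[Corollaire 4.10]{hapa-98}, so there is no internal proof to compare yours against, and your closing remark that the lemma ``may be cited as is'' matches what the paper actually does. Judged as a reconstruction, your outline is sound in its architecture, and the forward direction is essentially right: your two local observations (vertices avoid all walls; an $e$-edge meets only $\Delta_e$, transversally at its midpoint) are correct consequences of the polarity between the faces of a Coxeter zonotope and the covectors of its reflection arrangement, though you should also rule out the edge meeting $\Delta_f$ through a mid-section of a cell \emph{not} containing that edge (the intersection property (iii) reduces this case to a vertex, which your first observation handles). One point is glossed: from ``each half-open arc lies in one component of $\Delta\setminus\Delta_e$'' you still need that these are \emph{different} components; ``transversal crossing plus exactly two components'' does not by itself exclude the two local sides being joined by a path going around $\Delta_e$, and closing this requires either the way $\Delta_e^{\pm}$ are actually defined in \cite{hapa-98} or a parity/simple-connectivity argument.

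The genuine gap is the one you yourself flag in the converse: the claim that a shortest path of $G(\Delta)$ traverses each wall at most once does not follow from the CAT(0)-convexity of the closed halfspaces, because a shortest path in the $1$-skeleton is not a $d_2$-geodesic, and the ``strictly shorter detour staying on one side'' is never constructed; producing it needs a combinatorial reflection or projection argument in the carrier of the wall. As written, the proof is therefore incomplete at its pivotal step. The pragmatic repair is that this missing statement is exactly the ``only if'' half of Lemma~\ref{isometric_embedding} (\cite[Proposition 4.11]{hapa-98}), which the paper also imports as a black box; quoting it makes your converse complete (the classes $e_1,\dots,e_k$ are distinct, each wall $\Delta_{e_i}$ is crossed exactly once and hence separates $u$ from $v$, so $k=1$). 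The only caveat is that this inverts the logical order of the source, where Corollaire 4.10 precedes Proposition 4.11, so a genuinely self-contained proof of the present lemma would still have to supply the shortcut argument.
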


\begin{lemma} \cite[Proposition 4.11]{hapa-98} \label{isometric_embedding} A path $P$ of $G(\Delta)$ between two vertices $u,v$  is a shortest
$(u,v)$-path in $G(\Delta)$ if and only if $P$ traverses each mid-hyperplane of $||\Delta||$ at most once. 
\end{lemma}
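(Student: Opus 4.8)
For vertices $u,v$ of $G(\Delta)$, write $N(u,v)$ for the number of mid-hyperplanes of $\Delta$ that separate $u$ and $v$ in the sense of Lemma~\ref{convex}. The plan rests on two elementary observations. First, every edge $xy$ of $G(\Delta)$ is traversed by \emph{exactly one} mid-hyperplane: its endpoints are separated by a single wall (Lemma~\ref{onewall}), and ``traverses'' was defined via this separation; also no vertex of $G(\Delta)$ lies on a mid-hyperplane, so each vertex sits in one of the two open halfspaces $\Delta_e^{\pm}$. Second, for any mid-hyperplane $\Delta_e$ and any $(u,v)$-path $P$, reading $P$ as a curve in $||\Delta||$ and recording which of $\Delta_e^{+},\Delta_e^{-}$ each successive vertex lies in, the side switches exactly at those edges of $P$ that cross $\Delta_e$; hence if $\Delta_e$ separates $u$ and $v$ then $P$ crosses $\Delta_e$ an odd, in particular positive, number of times. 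Combining the two, $\mathrm{length}(P)=\sum_{e}\bigl(\text{number of edges of }P\text{ crossing }\Delta_e\bigr)\ge N(u,v)$, so $d_{G(\Delta)}(u,v)\ge N(u,v)$ for all $u,v$.

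For the ``if'' direction, suppose $P$ crosses every mid-hyperplane at most once. Then a wall crossed by $P$ is crossed exactly once, hence an odd number of times, hence separates $u$ and $v$; conversely each wall separating $u$ and $v$ is crossed an odd, and so by hypothesis exactly one, number of times. Thus the walls crossed by $P$ are precisely the $N(u,v)$ separating ones, each once, and $\mathrm{length}(P)=N(u,v)\le d_{G(\Delta)}(u,v)\le\mathrm{length}(P)$ by the first paragraph; so $P$ is a geodesic.

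For the ``only if'' direction it suffices to prove the metric identity $d_{G(\Delta)}(u,v)=N(u,v)$. Indeed, granting it, if $P$ is geodesic then $\mathrm{length}(P)=N(u,v)=\sum_{e}c_e$ with $c_e$ the number of times $P$ crosses $\Delta_e$; since $c_e\ge 1$ for each separating wall and $c_e$ is even for each non-separating one, equality forces $c_e=1$ on the separating walls and $c_e=0$ on all others, which is the assertion. The substance is therefore the upper bound $d_{G(\Delta)}(u,v)\le N(u,v)$, which I would attack by induction on $N(u,v)$: the inductive step amounts to producing, for $u\ne v$, a neighbour $w$ of $u$ lying on $v$'s side of some mid-hyperplane separating $u$ and $v$ (so that $w$ agrees with $u$ on every other wall and $N(w,v)=N(u,v)-1$). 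The natural device is the \emph{unique} $l_2$-geodesic $\gamma$ from $u$ to $v$ guaranteed by the CAT(0) hypothesis: each wall $\Delta_e$ being convex (Lemma~\ref{convex}), $\gamma^{-1}(\Delta_e)$ is a subinterval of the domain, so $\gamma$ meets each wall in a connected set and passes to the opposite side precisely for the $N(u,v)$ walls that separate $u$ and $v$; one then wants to turn the first such genuine crossing of $\gamma$ into a combinatorial step at $u$.

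The main obstacle is exactly this last conversion. It is not true that the nearest separating wall is crossed inside a cell incident to $u$ (from a vertex of a Coxeter zonotope one may move without immediately crossing any mid-section, e.g.\ at a vertex of a permutohedron), so one cannot just read off an edge at $u$ from the initial direction of $\gamma$. Making the step rigorous needs the finer geometry of walls in CAT(0) Coxeter complexes developed in \cite{hapa-98} — nearest-point projections onto walls, convexity of the closed halfspaces, and the resulting control of geodesics — which is precisely why this is stated there as \cite[Proposition 4.11]{hapa-98}; accordingly I would either invoke that result directly or reproduce its argument.
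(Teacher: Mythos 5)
The paper does not prove this lemma at all --- it is imported verbatim from \cite[Proposition 4.11]{hapa-98}, just like Lemmas~\ref{convex} and~\ref{onewall} --- so there is no in-paper argument to compare against. Your parity argument correctly establishes the lower bound $d_{G(\Delta)}(u,v)\ge N(u,v)$ and with it the ``if'' direction, your reduction of ``only if'' to the identity $d_{G(\Delta)}(u,v)=N(u,v)$ is sound, and for the remaining upper bound you rightly flag the genuine geometric difficulty (producing a first combinatorial step across a separating wall) and defer to the cited reference, which is exactly what the paper itself does for the entire statement.
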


These three results imply that the arrangement of mid-hyperplanes of a CAT(0) Coxeter complex $\Delta$ defines a wall system sensu \cite{hapa-98}, which in turn
provides us with a system $\covectors (\Delta)$ of sign vectors.  Define the mapping $\varphi: \Delta\rightarrow \{ \pm 1,0\}^E$ in the following way.
First, for $e\in E$ and $x\in \Delta$, set
$$
\varphi_e(x):=\begin{cases}
-1 &\mbox{ if } x\in ||\Delta^-_e||,\\
0  &\mbox{ if } x\in ||\Delta_e||,\\
+1 &\mbox{ if } x\in ||\Delta^+_e||.
\end{cases}
$$
Let $\varphi(x)=(\varphi_e(x): e\in E)$. Denote by $\covectors(\Delta)$ the set of all sign vectors of the form $\varphi(x), x\in ||\Delta||$.
Notice that if a point $x$ of $||\Delta||$ does
not belong to any mid-hyperplane of $||\Delta||$, then $\varphi(x)\in \{ \pm 1\}^E$; in particular, this is the case for the vertices of $G(\Delta)$.
Moreover, Lemma~\ref{isometric_embedding} implies that $\varphi$ defines
an isometric embedding of $G(\Delta)$ into the hypercube $\{\pm 1\}^E$.

\begin{theorem} \label{zonotopalCOM} Let $\Delta$ be a CAT(0) Coxeter complex, $E$ be the classes of parallel edges of $\Delta$,
and $\covectors(\Delta):=\cup \{ \varphi(x): x\in ||\Delta||\}\subseteq \{ \pm 1,0\}^E$. Then  $(E, \covectors(\Delta))$
is a simple COM and $G(\Delta)$ is its tope graph.
\end{theorem}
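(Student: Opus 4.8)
The plan is to argue by induction on the dimension $d$ of $\Delta$, checking in the inductive step the four hypotheses of the recursive characterization of COMs supplied by the ``COM'' part of Theorem~\ref{hyperplane}; the cases $d\le1$ are handled by the same argument, with condition (2$'$) below holding trivially since the hyperplanes are then one-element systems. Before invoking Theorem~\ref{hyperplane} I would first establish that $(E,\covectors(\Delta))$ is simple, which is where Lemmas~\ref{onewall} and~\ref{isometric_embedding} enter most cleanly. Each class $e\in E$ contains a genuine edge $uv$; by Lemma~\ref{onewall} the endpoints of an edge are separated by a single wall, so $\varphi_e(u)=-\varphi_e(v)$ while $\varphi_e(m)=0$ for the midpoint $m$ of $uv$, which gives (N1$^*$). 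If some $e\ne f$ were parallel in the sign-vector sense, then $\varphi_e\varphi_f$ would be constant over all vertices of $G(\Delta)$; but crossing an $e$-edge flips $\varphi_e$ and fixes $\varphi_f$, a contradiction, so (N2$^*$) holds as well and the system is simple, hence semisimple.

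For condition (2$'$) one uses the induction hypothesis. The hyperplane $\covectors_e^0$ equals $\{\varphi(x):x\in\Delta_e\}$. By Lemma~\ref{convex} the wall $\Delta_e$ is a convex subset of $\Delta$, hence CAT(0) in the induced metric, and since mid-sections of Coxeter zonotopes are Coxeter zonotopes of one lower dimension, $\Delta_e$ inherits the structure of a CAT(0) Coxeter complex of dimension $<d$ whose walls are precisely the sets $\Delta_e\cap\Delta_f$ for the walls $\Delta_f$ crossing $\Delta_e$. On these coordinates the restriction of $\varphi$ to $\Delta_e$ agrees with the sign-vector map of $\Delta_e$, whereas on all remaining coordinates (the element $e$ itself, and every wall parallel to $\Delta_e$) it is constant; thus $\covectors_e^0$ arises from $\covectors(\Delta_e)$ by adjoining a coloop and some constant coordinates, and these operations preserve (FS) and (SE). By the induction hypothesis $\covectors(\Delta_e)$ is a COM, so $\covectors_e^0$ is a COM.

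The remaining conditions (3), (4) and (1) are verified directly. Once (C) is known (below) and since $\covectors(\Delta)$ has no coloops, the topes are exactly the full-support covectors, and these are precisely the vectors $\varphi(v)$ for vertices $v$ of $\Delta$: a point lying in no wall sits in the relative interior of a cell and off every mid-section of that cell, so its restriction to the coordinates active in that cell is a tope of the cell's oriented matroid, i.e.\ corresponds to a vertex. By Lemmas~\ref{onewall} and~\ref{isometric_embedding}, $\varphi$ embeds $G(\Delta)$ isometrically into $\{\pm1\}^E$ and two vertices are $G(\Delta)$-adjacent iff their images differ in exactly one coordinate; hence the tope graph of $\covectors(\Delta)$ is $G(\Delta)$, a partial cube, which gives (3). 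For (4), if $uv$ is an edge of class $e$ joining two topes, its midpoint $m$ has $\varphi_e(m)=0$, while for $f\ne e$ the wall $\Delta_f$ cannot meet the $1$-cell $uv$ at all (its trace on that cell would be a mid-section, forcing $f=e$), so $\varphi_f(m)=\varphi_f(u)=\varphi_f(v)$ and $\varphi(m)=\tfrac12(\varphi(u)+\varphi(v))\in\covectors(\Delta)$. For (C): given $X=\varphi(x)$ and $Y=\varphi(y)$, let $\gamma$ be the (unique) geodesic from $x$ to $y$. For $e\notin X^0$ the closed wall $\Delta_e$ lies at positive distance from $x$, so $\varphi_e(\gamma(t))=X_e$ for all small $t>0$; for $e\in X^0$ with $Y_e=0$, convexity of $\Delta_e$ gives $\gamma\subseteq\Delta_e$ and $\varphi_e(\gamma(t))=0$; and for $e\in X^0$ with $Y_e\ne0$ the open halfspace $\Delta_e^{Y_e}$ is open and convex with $y$ in it and $x$ in its closure, so the standard CAT(0) fact~\cite{BrHa} that the open segment from a boundary point to an interior point of an open convex set stays in that set yields $\varphi_e(\gamma(t))=Y_e$ for all $t>0$. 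Hence $\varphi(\gamma(t))=X\circ Y$ for all small $t>0$, establishing (C). Theorem~\ref{hyperplane} then yields that $(E,\covectors(\Delta))$ is a COM, which together with simplicity and the identification of the tope graph gives the theorem.

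The main obstacle I anticipate is marshalling the geometric inputs correctly: that the open halfspaces $\Delta_e^{\pm}$ (and not merely the walls) are convex; that the wall $\Delta_e$, with its induced cell structure, genuinely is a CAT(0) Coxeter complex whose system of sign vectors is a ``deletion'' of $\covectors_e^0$ in the precise sense used above; and the elementary but essential CAT(0) facts about geodesics relative to convex sets used for (C). All of these are provided by the theory developed in~\cite{hapa-98,BrHa}; once they are in place, the combinatorial content of the statement is carried entirely by the recursive characterization of Theorem~\ref{hyperplane} and the dimension induction.
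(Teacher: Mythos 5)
Your overall architecture coincides with the paper's: induction plus the recursive characterization of Theorem~\ref{hyperplane}, with simplicity, conditions (2$'$), (3), (4) handled essentially as in the paper (your derivation of (N2$^*$) from a single $e$-edge and Lemma~\ref{onewall} is in fact a little cleaner than the paper's case split on crossing versus parallel walls). The problem is your verification of (C) in the only case that carries real content, namely $X_e=0$ and $Y_e\neq 0$. There you invoke two things: that the \emph{open} halfspace $\Delta_e^{Y_e}$ is convex, and ``the standard CAT(0) fact that the open segment from a boundary point to an interior point of an open convex set stays in that set.'' Neither is available from the inputs you cite. Lemma~\ref{convex} (Lemme 4.4 of Haglund--Paulin) gives convexity of the wall $\Delta_e$ and the fact that it separates; it says nothing about convexity of the open halfspaces. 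More seriously, the ``line segment principle'' you appeal to is a theorem about convex sets in \emph{linear} spaces (its usual proof uses $z_t=(1-t)x+ty$ and the dilation $B(z_t,tr)=(1-t)x+tB(y,r)$); in a general CAT(0) space the geodesic contraction toward a point is $1$-Lipschitz but not open, so the argument does not transfer, and the statement is not in \cite{BrHa} in citable form. What one can extract softly from convexity is only that $\gamma^{-1}(\Delta_e)$ is an initial closed interval $[0,a]$ and $\gamma((a,1])\subseteq\Delta_e^{+}$; nothing formal prevents $a>0$, i.e.\ the geodesic running inside the wall for a while before entering the halfspace, which is exactly the scenario that would make $\varphi(w)\neq X\circ Y$.

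Ruling out $a>0$ is precisely where the paper spends its geometric effort: it locates the exit point $z$, uses the defining property of Coxeter zonotopes that the mid-section is \emph{orthogonal} to the $e$-edges to produce a right angle at $z$, and then cuts the corner via two right triangles assembled into a comparison quadrilateral, contradicting geodesicity (Figure~\ref{fig:zonotopal}). Note that this argument genuinely uses the evenness/orthogonality of Coxeter zonotopes, which your version never touches --- a sign that the step you outsourced to ``standard facts'' is the one place the hypothesis enters. So the gap is concrete: you must either prove the line segment principle for the specific sets $\Delta_e^{\pm}$ (which amounts to reproducing the paper's corner-cutting argument, or an equivalent local angle computation in the link of $z$), or locate a precise statement in \cite{hapa-98} asserting convexity of open halfspaces \emph{together with} the transversality of geodesics to walls. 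As written, the claim that $\varphi_e(\gamma(t))=Y_e$ for all small $t>0$ is unsupported, and with it the composition axiom (C), on which your appeal to Theorem~\ref{hyperplane} and your identification of the topes both depend.
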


\begin{proof} We proceed by induction on the size of $E$. It suffices to show that  $(E,\covectors(\Delta))$ is simple
and satisfies the conditions (1),(3),(4), and (2$'$) of Theorem~\ref{hyperplane}.
That $\covectors(\Delta)$ satisfies (N1) is evident. To verify the condition (N2),
let $e,f\in E$. If the mid-hyperplanes $||\Delta_e||$ and $||\Delta_f||$
cross, then the four intersections $||\Delta^-_e||\cap ||\Delta^-_f||,||\Delta^-_e||\cap ||\Delta^+_e||, ||\Delta^+_e||\cap ||\Delta^-_f||,$
and $||\Delta^+_e||\cap ||\Delta^+_f||$ are nonempty, and  as
$X$ and $Y$ with $\{ X_eX_f,Y_eY_f\}=\{ \pm 1\}$ one can pick the sign vectors $\varphi(x)$ and $\varphi(y)$ of any
two points $x\in ||\Delta^+_e||\cap ||\Delta^-_f||$ and $y\in ||\Delta^-_e||\cap ||\Delta^-_f||$.  Otherwise, if $||\Delta_e||$
and $||\Delta_f||$ are parallel, then one  of the four pairwise intersections of halfspaces is empty, say
$||\Delta^-_e||\cap ||\Delta^+_f||=\varnothing$, and as $X$ and $Y$ one can take the sign vectors $\varphi(x)$ and $\varphi(y)$
of any points $x\in ||\Delta^+_e||\cap ||\Delta^+_f||$ and $y\in ||\Delta^+_e||\cap ||\Delta^-_f||$.
This establishes that $(E, \covectors(\Delta))$ is simple.

Notice that the tope graph of $(E, \covectors(\Delta))$ coincides with $G(\Delta)$. Indeed, let $X$ be a
tope of $\covectors(\Delta)$. Then  $X=\varphi(x)$ for some $x\in ||\Delta||$.
Let $x\in {\mathcal Z}$ for a cell $\mathcal Z$ of $||\Delta||$. The sign maps of $\covectors(\Delta)$
restricted to $\mathcal Z$ define an oriented matroid whose topes are the vertices
of $\mathcal Z$. Therefore $\mathcal Z$ contains a vertex $v$ such that $\varphi(v)=\varphi(x)=X$,
whence each tope of $\covectors (\Delta)$ is a vertex of $G(\Delta)$. Conversely,
since each vertex $v$ of $G(\Delta)$ does not belong to any
mid-hyperplane of $||\Delta||$, $\varphi(v)$ is a vertex of $\{ \pm 1\}^E$, and thus a
tope of $\covectors(\Delta)$. This shows that the tope graph of  $\covectors(\Delta)$ and
the 1-skeleton of $\Delta$ have the same sets of vertices. Lemma~\ref{onewall} implies
that two vertices $u$ and $v$ are adjacent in $G(\Delta)$ if and only if they are adjacent
in the tope graph of $\covectors (\Delta)$.
Then Lemma~\ref{isometric_embedding} establishes the condition (3). The condition (4)
immediately follows from the definition of $\covectors(\Delta)$.

Now we establish condition (2$'$) that all hyperplanes $\covectors^0_e(\Delta)$ of $(E,\covectors(\Delta))$ are COMs.
Notice that $X\in \covectors^0_e(\Delta)$ if and only if $X=\varphi(x)$ for some $x\in ||\Delta_e||$. Therefore the hyperplane
$\covectors^0_e(\Delta)$ of $\covectors(\Delta)$ coincides with the restriction of $\covectors(\Delta)$ to the points of
the mid-hyperplane $||\Delta_e||$. Hence $\covectors^0_e(\Delta)$ can be viewed as $\covectors(\Delta_e)$, where
$\covectors(\Delta_e)$ is the set of all sign vectors of $\{ \pm 1,0\}^E$ of the form $\varphi(x), x\in ||\Delta_e||$.
By Lemma~\ref{convex}, $||\Delta_e||$ is a convex subset of $||\Delta||$, thus $\Delta_e$ is a CAT(0) Coxeter complex.
Let $E'$ denote the classes  of  parallel edges of $\Delta_e$; namely,  $E'$ consists of all $f\in E\setminus \{ e\}$ such that
the mid-hyperplanes $||\Delta_e||$ and $||\Delta_{f}||$ are crossing. Notice that  the $f$th mid-hyperplane
$||(\Delta_e)_{f}||$ of $||\Delta_e||$ is just the intersection $||\Delta_e||\cap ||\Delta_f||$.
Analogously, the halfspaces $||(\Delta_e)^-_f||$ and $||(\Delta_e)^+_f||$ coincide with the intersections
$||\Delta^-_f||\cap ||\Delta_e||$ and $||\Delta^+_f||\cap ||\Delta_e||$, respectively. Define
$\varphi': ||\Delta_e||\rightarrow \{ \pm 1,0\}^{E'}$ as follows. For $x\in ||\Delta_e||$ and $f\in E'$, set
$$
\varphi'_f(x):=\begin{cases}
-1 &\mbox{ if } x\in ||(\Delta_e)^-_f||,\\
0  &\mbox{ if } x\in ||(\Delta_e)_f||,\\
+1 &\mbox{ if } x\in ||(\Delta_e)^+_f||.
\end{cases}
$$
Let $\varphi'(x)=(\varphi'_f(x): f\in E')$. Denote by $\covectors'(\Delta_e)$ the set of all sign vectors of the
form $\varphi'(x), x\in ||\Delta_e||$. By the induction hypothesis, $(E', \covectors'(\Delta_e))$ is a COM.
For any point $x\in ||\Delta_e||$, $\varphi(x)$ coincides with $\varphi'(x)$ on $E'$.
Since  $\varphi_e(x)=0$, $\varphi_{e'}(x)=-1$ if $||\Delta_{e'}||$ is parallel to $||\Delta_e||$
and $||\Delta_e||\subset ||\Delta^-_{e'}||,$ and $\varphi_{e'}(x)=+1$ if $||\Delta_{e'}||$ is
parallel to $||\Delta_e||$ and $||\Delta_e||\subset ||\Delta^+_{e'}||,$ $\covectors(\Delta_e)$
can be obtained from $\covectors'(\Delta_e)$ by adding to all sign vectors of $\covectors'(\Delta_e)$
in each coordinate of $E\setminus E'$ a respective constant $0,-1,$ or $+1$. Hence
$(E,\covectors(\Delta_e))$ is a COM, thus establishing (2$'$).

Finally, we show that $\covectors(\Delta)$ satisfies the condition (1), i.e., the composition rule (C). Let $X$ and $Y$
be two sign vectors of $\covectors(\Delta)$ and  $x$ and $y$ be two points of $||\Delta||$ such that $\varphi(x)=X$ and
$\varphi(y)=Y$. As in the case of realizable COMs  presented in the introduction,  connect the two points
$x$ and $y$ by the unique geodesic $\gamma(x,y)$ of $||\Delta||$ and choose $\epsilon> 0$ small enough so
that the open ball of radius $\epsilon$ around $x$ intersects only those mid-hyperplanes of $||\Delta||$ on which $x$ lies.
Pick any point $w$ from the intersection of this $\epsilon$-ball with $\gamma(x,y)\setminus \{ x\}$ and let $W=\varphi(w)$.
We assert that $W=X\circ Y$. Pick any $e\in E$. First suppose that $X_e\ne 0.$ From the choice of $w$ it
immediately follows that $W_e=\varphi_e(w)=\varphi_e(x)=X_e.$ Now suppose that $X_e=0$. If $Y_e=0,$ then
$x,y\in ||\Delta_e||$. Since by Lemma~\ref{convex}, $||\Delta_e||$ is a convex subset of $||\Delta||$,
we have $w\in \gamma(x,y)\subset ||\Delta_e||$, whence $W_e=0=X_e\circ Y_e$. Finally, if $Y_e\ne 0$,
say $Y_e=+1$, then since the set $||\Delta^+_e||\cup ||\Delta_e||$ is convex, either $w\in ||\Delta^+_e||$
or $w\in ||\Delta_e||$. In the first case, we have $W_e=+1=X_e\circ Y_e$ and we are done.
On the other hand, we will show below that the case $w\in ||\Delta_e||$ is impossible.

\begin{figure}[htb]
\includegraphics[width = .7\textwidth]{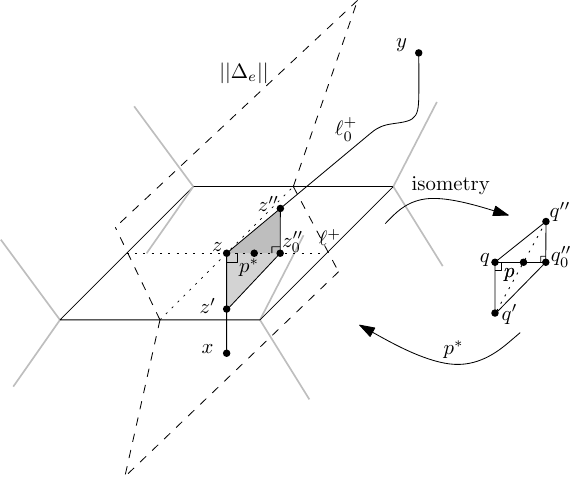}
\caption{To the proof of composition rule in Theorem~\ref{zonotopalCOM}.}
\label{fig:zonotopal}
\end{figure}

Indeed, suppose by way of contradiction that $w\in ||\Delta_e||$. Since $x\in ||\Delta_e||,y\in ||\Delta^+_e||,$
and $||\Delta_e||$ is convex, there exists a point $z\in \gamma(x,y)$ such that $\gamma(x,z)\subset ||\Delta_e||$
and $\gamma(z,y)\setminus\{ z\}\subset ||\Delta^+_e||$. Pick points $z'\in \gamma(x,z)$ and $z''\in \gamma(z,y)$
close enough to $z$ such that each of the couples $z',z$ and $z,z''$ belongs to a common cell of $||\Delta||$.
The choice of $z$ implies that $z',z,z''$ cannot all belong to a common  cell of $||\Delta||$. Denote by
${\mathcal Z}'$ a maximal cell containing $z',z$ and by ${\mathcal Z}''$ a maximal cell containing $z,z''$.
Then $z$ belongs to a common face ${\mathcal Z}_0$ of ${\mathcal Z}'$ and ${\mathcal Z}''$.  Let $\Pi'$ and
$\Pi''$ denote the supporting Euclidean  spaces of  ${\mathcal Z}'$ and ${\mathcal Z}''$, respectively. Let
$\ell$ be the line in $\Pi''$ passing via the point $z$ and parallel to the edges of ${\mathcal Z}''$ from
the equivalence class $e$ and let $\ell^+$ be the ray with origin $z$ and containing the points of $\ell\cap ||\Delta^+_e||$
(this intersection is a non-empty half-open interval). Let $\ell_0$ be the line in $\Pi''$  passing via $z$ and
$z''$ and let $\ell^+_0$ be the ray of $\ell_0$ with origin $z$ and containing $[z,z'']$.  Since $\ell$ is
orthogonal to the supporting plane of the mid-section ${\mathcal Z}'_0={\mathcal Z}'\cap \Delta_e$, the angle
between $\ell^+_0$ and $[z,z']$ is $\frac{\pi}{2}$. Now suppose that $z''$ is selected so close to $z$ that
the orthogonal projection $z''_0$ of $z''$ on the line $\ell$ belongs to the intersection ${\mathcal Z}_0\cap {\ell}^+$.

As a result, we obtain two right-angled triangles $T(z,z',z''_0)$ and $T(z,z'',z''_0)$, the first belonging to $\Pi'$ and the 
second belonging to $\Pi''$ (see Figure~\ref{fig:zonotopal} for an illustration). Therefore, their union is isometric to a convex 
quadrilateral $Q=Q(q,q',q''_0,q'')$ in ${\mathbb R}^2$ having the sides $qq', q'q''_0,q''_0q'',$ and $q''q$ of lengths 
$d_2(z,z'),d_2(z',z''_0),d_2(z''_0,z''),$ and $d_2(z'',z)$, respectively. Let $p$ be the intersection of the diagonals 
$q'q''$ and $qq''_0$ of $Q$ and let $p^*$ be the point (of $\Delta$) on the segment $[z,z''_0]$ such that 
$d_2(z,p^*)=d_{{\mathbb R}^2}(q,p)$ and $d_2(z''_0,p^*)=d_{{\mathbb R}^2}(q''_0,p)$. Then
\begin{align*}
d_2(z',z)+d_2(z,z'')=d_{{\mathbb R}^2}(q',q)+d_{{\mathbb R}^2}(q,q'')>d_{{\mathbb R}^2}(q',q'')\\
=d_{{\mathbb R}^2}(q',p)+d_{{\mathbb R}^2}(p,q'')=d_2(z',p^*)+d_2(p^*,z''),\hspace*{1.4cm}
\end{align*}
contrary to the assumption that $z\in \gamma(z',z'')\subset \gamma(x,y)$.  This establishes our claim and concludes the 
proof that $\covectors(\Delta)$ satisfies the composition rule (C).
\end{proof}

We conclude this section by showing that in fact all COMs having square-free tope graphs arise from 2-dimensional zonotopal COMs:

\begin{proposition}
A square-free partial cube $G$ is the tope graph of a COM ${\mathcal L}(G)$ if and only if $G$ does
not contain an 8-cycle with two subdivided diagonal chords
(graph $X$ in Fig.1 in \cite{Ti}) as an isometric subgraph. The resulting ${\mathcal L}(G)$ is a zonotopal COM.
\end{proposition}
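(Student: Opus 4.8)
Proof proposal.

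\medskip

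\noindent\textbf{Plan.} The plan is to factor the statement through the correspondence between two\hyp{}dimensional CAT(0) Coxeter complexes and COMs: I would show that, for a square\hyp{}free partial cube $G$, being the tope graph of a COM is \emph{equivalent} to being the $1$\hyp{}skeleton of a two\hyp{}dimensional CAT(0) Coxeter complex, and then invoke the graph\hyp{}theoretic characterization of that class of graphs supplied by~\cite{Ti}. The first reduction has two halves. For ``$\Leftarrow$'', Theorem~\ref{zonotopalCOM} already does the work: if $G$ is the $1$\hyp{}skeleton of a $2$\hyp{}dimensional CAT(0) Coxeter complex $\Delta$, it produces a simple (zonotopal) COM $\covectors(\Delta)$ with tope graph $G$, so one sets $\mathcal L(G):=\covectors(\Delta)$. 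For ``$\Rightarrow$'', let $\covectors$ be a semisimple COM whose tope graph $G$ is square\hyp{}free; I would first argue that \emph{every face of $\covectors$ has rank at most $2$}. Indeed, by Lemma~\ref{lem:fiber} a face contracts to a simple OM, and a simple OM of rank $\ge 3$ has a $4$\hyp{}cycle in its tope graph: its cell complex is combinatorially a zonotope of dimension $\ge 3$ all of whose $2$\hyp{}faces are even polygons (being rank\hyp{}$2$ faces, i.e.\ even cycles $C_{2k}$), and a polyhedral $2$\hyp{}sphere with only even $2$\hyp{}faces must have a quadrilateral $2$\hyp{}face by Euler's formula (assuming all $2$\hyp{}faces have $\ge 6$ edges forces $f_1\ge \tfrac32 f_0$ and $f_1\le \tfrac32 f_0-3$ simultaneously), and a rank $\ge 4$ OM has such a rank\hyp{}$3$ face; this contradicts square\hyp{}freeness. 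Hence the cells of $\Delta(\covectors)$ are vertices, edges, and even polygons of length $\ge 6$; realizing each as a regular polygon makes it a Coxeter zonotope, so $\Delta(\covectors)$ is a $2$\hyp{}dimensional Coxeter complex.

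\medskip

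\noindent It remains to see that this $\Delta(\covectors)$ is CAT(0). It is contractible by Proposition~\ref{complex}. It is non\hyp{}positively curved: by the identity $F(V)\cap F(W)=F(V\circ W)$ two distinct $2$\hyp{}cells meet in a single cell, a vertex or an edge, never in a path of length $2$, so no vertex link contains a double edge; since moreover all cells are $\ge 6$\hyp{}gons (interior angle $\ge 2\pi/3$), every loop of length $m$ in a vertex link has total angle $\ge m\cdot 2\pi/3\ge 2\pi$ for $m\ge 3$, so Gromov's link condition holds. A contractible non\hyp{}positively curved polygonal complex is CAT(0) by the Cartan--Hadamard theorem. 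Combining the two halves: a square\hyp{}free partial cube $G$ is a COM tope graph if and only if it is the $1$\hyp{}skeleton of a $2$\hyp{}dimensional CAT(0) Coxeter complex. The ``zonotopal'' clause is then automatic: since every face of $\mathcal L(G)$ is an OM of rank $\le 2$ and every OM of rank $\le 2$ is realizable (rank $0,1$ trivially, rank $2$ by an arrangement of lines through the origin of $\mathbb R^2$), $\mathcal L(G)$ is locally realizable.

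\medskip

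\noindent\textbf{Where the work is.} With the above in place, the proposition is exactly the assertion that a square\hyp{}free partial cube carries a $2$\hyp{}dimensional CAT(0) Coxeter complex structure (obtained by filling its convex cycles with regular $2k$\hyp{}gons) precisely when it contains no isometric copy of the graph $X$ of Fig.~1 in~\cite{Ti}. The easy half of this is ``$\Rightarrow$'': if $X$ embedded isometrically, then in a COM tope graph each of the degree\hyp{}$2$ vertices of $X$ would have to lie on a single $2$\hyp{}cell bounded by an isometric cycle, and the only isometric $6$\hyp{}cycles of $X$ are its four natural ``hexagons'' $H_1,\dots,H_4$; forcing all four to be cells makes their union a closed surface of Euler characteristic $10-12+4=2$, a $2$\hyp{}sphere, which cannot be a subcomplex of the contractible $\Delta(\mathcal L)$. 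I expect the \emph{main obstacle} to be the substantive converse half: showing that in the absence of an isometric $X$ the filled complex is genuinely simply connected (no hidden holes) and has the link condition, i.e.\ that the single graph $X$ is the \emph{only} obstruction. This is the purely combinatorial classification carried out in~\cite{Ti}; a secondary technical point to pin down is that an isometric (rather than convex) copy of $X$ in $G$ still forces the cells of $\mathcal L$ to fill it as claimed, which I would handle by passing to the convex hull of the copy of $X$ --- a convex subgraph, hence again a COM tope graph via the carrier/face arguments behind Proposition~\ref{carriers} --- and tracking which cells meet it. Once the classification of~\cite{Ti} is granted, the translation back and forth between COMs and $2$\hyp{}dimensional CAT(0) Coxeter complexes via Theorems~\ref{zonotopalCOM} and~\ref{hyperplane} is routine.
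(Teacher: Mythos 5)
Your overall architecture matches the paper's: both directions are routed through Theorem~\ref{zonotopalCOM} and two-dimensional CAT(0) Coxeter complexes, with the combinatorial input coming from \cite{Ti}. Your treatment of necessity is a genuinely different (and workable) argument: the paper simply observes that square-freeness forces an isometric copy of $X$ to be convex in $G$, hence (as an intersection of halfspaces, i.e.\ a fiber) the tope graph of a COM, and then checks directly that $X$ is not a COM tope graph; you instead contradict contractibility of $\Delta(\covectors)$ (Proposition~\ref{complex}) by exhibiting a $2$-sphere subcomplex of a $2$-dimensional complex. Your route needs two supporting facts you only gesture at: that the four hexagons of $X$ are convex in $G$, and that a convex $6$-cycle in the tope graph of a COM must bound a $2$-cell --- equivalently, that the ``hollow hexagon'' (six topes and six edge covectors, no zero vector) violates (SE); this is a short computation, e.g.\ eliminating between two antipodal edge covectors. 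Your rank-$\le 2$ Euler argument and the link-condition verification in the ``$\Rightarrow$'' half of your intermediate equivalence are correct, although the paper never needs that half.

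The genuine gap is in sufficiency. You defer ``no isometric $X$ implies the filled complex is CAT(0)'' to \cite{Ti} as a ``purely combinatorial classification,'' but \cite{Ti} does not classify COM tope graphs or two-dimensional CAT(0) Coxeter complexes --- that classification is precisely the proposition being proved here. What \cite{Ti} actually supplies (its Proposition 2.6) is the specific lemma that in a square-free partial cube with no isometric $X$, any two isometric cycles intersect in at most one edge. That lemma is the bridge to non-positive curvature: it rules out two polygons sharing two consecutive edges, so every cycle in a vertex link has combinatorial length at least $3$, and since every cell is a regular $2k$-gon with $k\ge 3$, every corner angle is at least $2\pi/3$; hence every link cycle has angular length at least $2\pi$. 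Simple connectivity, by contrast, costs nothing and does not use the absence of $X$: the paper gets it from the fact that the isometric cycles of any graph generate its cycle space. Without isolating the pairwise-intersection lemma, your plan has no argument leading from the hypothesis ``no isometric $X$'' to the link condition, and that is where the substance of the converse lies.
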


\begin{proof}
Notice that a square-free tope graph $G$ of a COM does not contain  $X$ as an isometric
subgraph. Indeed, since $G$ is square-free, the four 6-cycles of $X$ are convex and, moreover,
$X$ must be a convex subgraph of $G$. Since $X$ isometrically embeds into a 4-cube, it can be
directly checked that $X$ is not the tope graph as a COM, consequently, $X$ cannot occur in
the tope graph of a COM.

Conversely, let $G$ be a square-free partial cube not containing $X$ as an isometric subgraph.
By Proposition 2.6. of \cite{Ti}, any pair of isometric cycles intersect in at most
one edge. By replacing  each isometric cycle of $G$ with a regular polygon with the same number of edges,
we get a $2$-dimensional Coxeter zonotopal complex $||\Delta(G)||$.
Since isometric cycles of a graph generate the cycle space, this complex is simply connected.
Since the sum of angles around any vertex of $||\Delta(G)||$ is at least $2\pi$,
by Gromov's result for $2$-dimensional complexes \cite[p.215]{BrHa},  $||\Delta(G)||$ is CAT(0).
Thus, $\Delta(G)$ is a 2-dimensional CAT(0) Coxeter zonotopal complex and
by Theorem \ref{zonotopalCOM}, $G$ can be realized as the tope graph of a COM.
\end{proof}

\section{Concluding remarks}\label{sec:conclude}

In this paper, we show how COMs naturally arise as a generalization of
oriented matroids and lopsided sets by relaxing the covector axioms. Furthermore,
we give several descriptions of COMs, in particular, in terms of cocircuit axioms.
Nevertheless, such important features of the theory of OMs like duality and topological
representation still lack generalization. We believe that the following problem,
which is well-known for OMs and lopsided sets, is an important next step.

\begin{prob}
Establish duality theory for COMs.
\end{prob}

By Proposition~\ref{carriers}, the halfspaces of a COM are  COMs. Particular examples are the affine oriented
matroids, which are halfspaces of OMs. Even stronger, Lemma~\ref{lem:fiber} shows that the intersections of halfspaces, i.e., the fibers,
of a COM are COMs. While the following is true by definition for realizable COMs (see for instance the proof of Proposition~\ref{prop:realtozono}),
we believe that every COM arises this way from an OM:

\begin{conjecture}\label{conj:1}
 Every COM is a fiber of some OM.
\end{conjecture}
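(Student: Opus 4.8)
The plan is to prove Conjecture~\ref{conj:1} by induction on the number of maximal faces of $(E,\covectors)$, erecting the ambient oriented matroid step by step from the amalgamation machinery of Section~\ref{sec:amalgam}. After a few harmless reductions one may assume $(E,\covectors)$ is simple: a coloop is deleted and re-introduced by a direct sum with the free OM $\{\pm1,0\}$ on that coordinate (taking the fiber over $0$), a non-zero-constant coordinate likewise (fiber over that sign), and a parallel coordinate by adjoining a (possibly reoriented) parallel copy inside the resulting OM --- all of these operations preserve the property of being a fiber of an OM, since deletions, direct sums, fibers and parallel extensions of OMs are OMs. The base case is then an OM, which is the (empty-deletion) fiber of itself. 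Note also that the realizable case is essentially already done: as in the proof of Proposition~\ref{prop:realtozono}, write $\covectors=\covectors(E,C)$ with $C$ the interior of a polyhedron $P$, throw in the supporting hyperplanes of $P$, homogenize to a central arrangement, and observe that $\covectors$ is precisely the fiber of the resulting realizable OM obtained by fixing the new coordinates to $+$. The content of the conjecture is therefore the non-realizable case.

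For the inductive step, pick a non-coloop $e$ witnessing that $(E,\covectors)$ is not an OM and decompose $\covectors=\covectors'\cup\covectors''$ with $\covectors':=\covectors_e^-$ and $\covectors'':=\overline{\covectors_e^+}$, so that $\covectors'\cap\covectors''=N^-(\covectors_e^0)$; by Proposition~\ref{carriers} all three systems are COMs, and by the analysis preceding Proposition~\ref{amalgam} each of them has strictly fewer maximal faces than $\covectors$. By the induction hypothesis $\covectors'$, $\covectors''$ and $\covectors'\cap\covectors''$ are fibers of OMs $\mathcal M'$, $\mathcal M''$ and $\mathcal M_0$. The crux --- and the step I expect to be the main obstacle --- is to \emph{amalgamate} $\mathcal M'$ and $\mathcal M''$ into a single OM $\mathcal M$ having $\covectors$ as a fiber. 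To make the gluing patterns of the two sides match, one should prove a strengthened inductive statement, for instance that $\covectors$ is a fiber of an OM $\mathcal M$ in which every maximal face of $\covectors$ occurs as a maximal face of $\mathcal M$; then the faces lying in $\covectors'\cap\covectors''$ become literal common faces of $\mathcal M'$ and $\mathcal M''$, with $\mathcal M_0$ a common minor along which the amalgam is formed. Geometrically the element $e$ itself plays the role of the separating hyperplane: on its positive side $\mathcal M$ should look like $\mathcal M''$, on its negative side like $\mathcal M'$, and on the slice $\mathcal M_e^0$ like $\mathcal M_0$. Karlander's reconstruction of an OM from one of its halfspaces~\cite{Karlander-92,Zhu-15} is exactly this construction in the special case of a single maximal face on each side, and the task is to push such a reconstruction through an arbitrary decomposition.

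To certify that the glued object $\mathcal M$ is an OM I would invoke the recursive characterization of Theorem~\ref{hyperplane}: it suffices to check the composition rule~(1), that all hyperplanes of $\mathcal M$ are OMs~(2$''$), that every edge-barycenter of its tope graph belongs to $\mathcal M$~(4), and that the tope set is a simple acycloid, i.e.\ induces a partial cube and is closed under~(Sym)~(3$'$). Properties (2$''$) and~(4) should drop out of the inductive hypotheses for $\mathcal M'$ and $\mathcal M''$; partial-cubeness in~(3$'$) should follow by realizing the tope graph of $\mathcal M$ as the gluing of those of $\mathcal M'$ and $\mathcal M''$ along the tope graph of the shared carrier, exactly as in the proof of Proposition~\ref{amalgam}. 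The genuinely new and delicate point is the \emph{symmetry} demanded by~(3$'$): a COM need not be symmetric, so $\mathcal M$ must supply all the missing ``mirror'' faces, and one has to verify that this symmetrization does not destroy strong elimination --- equivalently, that the complementary halfspace $\mathcal M_e^+$ one is forced to attach is a coherent affine OM compatible with $\mathcal M''$. This is precisely where the notoriously subtle general problem of amalgamating oriented matroids enters; an affirmative resolution would, among other things, give the answer to Question~\ref{quest:1} directly in the realizable case, and is tightly linked to the structure theory of OM amalgams and of bouquets of oriented matroids~\cite{DeFu}. An alternative, and I expect harder to control, route would bypass the induction and instead guess the cocircuit set of the target OM by symmetrizing the cocircuits of $\covectors$ and adjoining enough new ground-set elements to ``pull apart'' its improper cocircuits, then verifying the OM cocircuit axioms directly.
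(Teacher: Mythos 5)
First, a point of calibration: the statement you are addressing is Conjecture~\ref{conj:1}, which the paper leaves \emph{open} --- the authors offer no proof, only the remark that the realizable case holds by construction (cf.\ the proof of Proposition~\ref{prop:realtozono}) and that a positive answer would yield a topological representation theorem. So the question is not whether you match the paper's argument but whether your proposal actually closes the conjecture, and it does not. The preliminary reductions (coloops, constant coordinates, parallel elements, and the realizable case via homogenization) are reasonable, and your identification of Karlander's reconstruction of an OM from a halfspace~\cite{Karlander-92,Zhu-15} as the one-maximal-face-per-side instance of what is needed is apt. But the entire difficulty of the conjecture is concentrated in the single step you label ``the crux'': given the decomposition $\covectors=\covectors_e^-\cup\overline{\covectors_e^+}$ of Section~\ref{sec:amalgam} and ambient OMs $\mathcal{M}'$, $\mathcal{M}''$, $\mathcal{M}_0$ furnished by the induction hypothesis, produce one OM $\mathcal{M}$ having $\covectors$ as a fiber. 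No construction is given. The induction hypothesis gives no control over the ground sets of $\mathcal{M}'$ and $\mathcal{M}''$, no reason that they restrict compatibly to the shared carrier $N^-(\covectors_e^0)$, and no way to realize $\mathcal{M}_0$ as a common minor of both; and oriented matroids notoriously fail to admit amalgams along common substructures in general --- this is precisely the pathology behind the bouquets of oriented matroids of~\cite{DeFu} that you cite. Your proposed strengthening of the inductive statement (that every maximal face of $\covectors$ be a maximal face of $\mathcal{M}$) is itself unproved and is not shown to be propagated by any construction.

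The subsequent appeal to Theorem~\ref{hyperplane} to certify that the glued object is an OM is moot, because that verification can only begin once $\mathcal{M}$ exists as a concrete set of sign vectors, which is the point the argument never reaches; and you yourself flag that the symmetry requirement (3$'$) --- supplying all the mirror faces without destroying strong elimination --- is unresolved. In short, what you have written is a sensible research program that correctly locates the obstruction, not a proof: after your argument the conjecture is exactly as open as before it. If you want to make progress, the place to push is either a direct cocircuit-level construction of $\mathcal{M}$ (your final alternative, which at least has the virtue of being a concrete candidate one could test against the axioms of Section~\ref{sec:cocircuit}), or a proof of the amalgamation step under an additional hypothesis (e.g.\ local realizability, as in Conjecture~\ref{conj:2}) where Proposition~\ref{prop:realtozono} gives you compatible realizations to glue.
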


This generalizes the corresponding conjecture of Lawrence~\cite{la-83} that lopsided sets are fibers of uniform OMs. 
Not only would Conjecture~\ref{conj:1} be a natural generalization
of the realizable situation, but using the Topological Representation Theorem of Oriented Matroids \cite{fo-la-78} it will also give
a natural topological representation for COMs. In fact, Conjecture~\ref{conj:1} is equivalent to the following conjecture: For every
COM $\mathcal L$ there is a number $d$ such that $\mathcal L$ can be represented by a set of $(d-2)$-dimensional
pseudospheres restricted to the intersection of a set of open $(d-1)$-dimensional pseudoballs inside a $(d-1)$-sphere.

For locally realizable COMs, the following version of Conjecture~\ref{conj:1} would imply a positive answer to Question~\ref{quest:1}:

\begin{conjecture}\label{conj:2}
Every locally realizable COM is a fiber of a realizable OM.
\end{conjecture}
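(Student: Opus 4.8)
The plan is to reduce the conjecture to a single \emph{realizable gluing lemma} by inducting along the amalgamation decomposition of Corollary~\ref{amalgam-decomposition}, and then to isolate the geometric compatibility of the local realizations as the genuine obstacle.

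First I would record the realizable building block, which is essentially Proposition~\ref{prop:realtozono} rephrased in the language of fibers. If $\covectors=\covectors(E,C)$ is realizable with $C=\mathrm{int}(P)$ and $P=\bigcap_{f\in F}\{x:\langle a_f,x\rangle\le b_f\}$, form the affine arrangement $E\cup F$ in $\mathbb{R}^d$, cone it to a central arrangement in $\mathbb{R}^{d+1}$ by adjoining the hyperplane at infinity $g$, and orient so that $C$ and the affine chart lie on the positive sides. Let $\M$ be the resulting realizable OM on the ground set $E\cup F\cup\{g\}$. Then $\covectors$ is exactly the simplification of the fiber of $\M$ obtained by fixing every coordinate of $F\cup\{g\}$ to $+1$: fixing these coordinates is the fiber operation, and deleting the now constant coordinates is simplification. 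This settles Conjecture~\ref{conj:2} whenever a global realization is available, and in particular reproves that $\Delta(\covectors)$ sits inside $\Delta(\M)$ as a subcomplex.

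For a general locally realizable COM I would induct on the number of maximal faces. If $\covectors$ is an OM then $\covectors=\covectors(\mathbf 0)$ is realizable by local realizability and equals the full fiber ($A=E$) of itself, giving the base case. Otherwise I use the decomposition of Section~\ref{sec:amalgam}: pick $e$ with $\covectors'=\covectors^-_e$, $\covectors''=\overline{\covectors^+_e}$ and $\covectors=\covectors'\cup\covectors''$ amalgamated along $\covectors'\cap\covectors''=N^-(\covectors^0_e)$. Each face $F(X)$ with $X_e=-1$ lies wholly inside $\covectors^-_e$, and each face with $X_e\in\{0,+1\}$ lies inside $\overline{\covectors^+_e}$; hence $\covectors'$ and $\covectors''$ are again locally realizable, their faces coinciding with faces of $\covectors$ (via Proposition~\ref{carriers} and Lemma~\ref{lem:fiber}), and they have strictly fewer maximal faces, exactly as in the contractibility argument of Proposition~\ref{complex}. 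By induction each of $\covectors',\covectors''$ is a fiber of a realizable OM, so it remains to prove the gluing lemma: \emph{if $\covectors'$ and $\covectors''$ are fibers of realizable OMs whose common fiber $\covectors'\cap\covectors''$ is the carrier $N^-(\covectors^0_e)$, then the COM amalgam $\covectors$ is a fiber of a realizable OM.}

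The hard part is this gluing step, and it is a genuine realizability obstruction rather than a formality. The induction yields central arrangements $A'$ in $\mathbb{R}^{d'}$ and $A''$ in $\mathbb{R}^{d''}$ whose OMs have $\covectors'$, $\covectors''$ as fibers, each containing a realization of the shared carrier along the hyperplane $H_e$ dual to $e$. To merge them into one arrangement realizing $\covectors$ as a fiber, the two induced realizations of the common section (the contraction by $e$) must be made to coincide after a projective change of coordinates. Since the realization space of an OM need not be connected, the two sections produced by the induction may lie in different components, and no projective transformation need match them; this is precisely the phenomenon behind Mn\"ev-type universality and the existence of non-realizable OMs assembled from realizable pieces. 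Thus the crux is to show that the specific sections arising here---restrictions of a locally realizable COM amalgamated along a single wall $\covectors^0_e$---can always be brought into agreement, for instance by controlling the realization space of the carrier or by enlarging the ambient dimension with auxiliary hyperplanes. In the globally realizable case of Proposition~\ref{prop:realtozono} this compatibility is handed to us for free by the ambient $\mathbb{R}^d$, which is why the conjecture is already known there; supplying it from local data alone is the missing ingredient, and the reason the statement---together with the affirmative answer to Question~\ref{quest:1} it would entail---remains open.
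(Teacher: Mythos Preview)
The statement you are addressing is Conjecture~\ref{conj:2}, which the paper explicitly leaves open: there is no proof in the paper to compare against. Your proposal is accordingly not a proof but an approach sketch, and you yourself say as much in the final paragraph, where you isolate the realizable gluing lemma as ``the missing ingredient'' and conclude that the statement ``remains open.''

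Regarding the sketch itself: the inductive setup via the amalgamation decomposition of Section~\ref{sec:amalgam} is sound. Your verification that the constituents $\covectors'=\covectors_e^-$ and $\covectors''=\overline{\covectors_e^+}$ inherit local realizability is correct, since for any $X\in\covectors'$ one has $F'(X)=F(X)$ (because $X_e=-1$ forces $F(X)\subseteq\covectors'$), and similarly every $X\in\covectors''$ lies below some $W$ with $W_e\in\{0,+1\}$, whence $F(X)\subseteq N(\covectors_e^0)\cup\covectors_e^+\subseteq\covectors''$. The count of maximal faces strictly drops on each side, so the induction terminates.

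The genuine gap is exactly where you place it: matching the two realizations along the shared carrier $N^-(\covectors_e^0)$. Your invocation of disconnected realization spaces and Mn\"ev-type universality correctly identifies why this is not a formality. However, nothing in your outline suggests a mechanism for overcoming it, so the proposal does not advance beyond the paper's own position that the conjecture is open. In short: there is no proof here, only a plausible reduction to a lemma that is itself at least as hard as the original conjecture.
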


Conjecture~\ref{conj:2} can be rephrased as: The tope graph of a locally realizable COM is a convex subgraph of the 1-skeleton of a zonotope.
Analogously, Conjecture~\ref{conj:1} can be rephrased as: The tope graph of a COM is a convex subgraph of the tope graph of an OM.

\bigskip\noindent
{\bf Acknowledgements.} We would like to acknowledge the referees for useful remarks improving the presentation. Furthermore, we thank Matja\v{z} 
Kov\v{s}e (Leipzig) for initial discussions about partial cubes and tope graphs of oriented matroids and are grateful to Andrea Baum, Tabea Beese, 
and especially Yida Zhu (Hamburg) for careful reading of previous versions and suggesting a simpler argument in the proof of Theorem~\ref{thm:cc_ses}. 
V.C. and K.K. were partially supported by ANR project TEOMATRO ({\sc ANR-10-BLAN-0207}). K.K was furthermore supported by PEPS grant EROS and ANR project GATO ({\sc ANR-16-CE40-0009-01}).

\bibliographystyle{my-siam}

\bibliography{OMC}

\end{document}